\documentclass[preprint,onecolumn,12pt]{IEEEtran}
\usepackage[a4paper,bindingoffset=0in,left=0.82in,right=0.82in,top=0.9in,bottom=0.9in,footskip=.3in]{geometry}
\usepackage{blindtext}





\usepackage{graphics}
\usepackage{graphicx}
\usepackage{epsfig}
\usepackage{amsmath} 
\usepackage{amssymb}
\usepackage{color}
\usepackage{amsfonts}
\usepackage{lineno} 
\usepackage{enumerate}

\allowdisplaybreaks

\newtheorem{theorem}{Theorem}
\newtheorem{definition}[theorem]{Definition}
\newtheorem{corollary}[theorem]{Corollary}
\newtheorem{lemma}[theorem]{Lemma}
\newtheorem{assumption}[theorem]{Assumption}
\newtheorem{proposition}[theorem]{Proposition}
\newtheorem{remark}[theorem]{Remark}
\newtheorem{claim}[theorem]{Claim}

\newcommand{\norm}[1]{\left\Vert #1\right\Vert}
\newcommand{\ssup}{{\sup}}
\newcommand{\abs}[1]{\left|#1\right|}

\def\field#1{\mathbb #1}%
\def\R{\field{R}}%
\def\Z{\mathbb{Z}}
\newcommand{\Rn}[1][n]{\R^{#1}}

\newcommand{\Rp}{\R_{\geq 0}}
\newcommand{\Rsp}{\R_{> 0}}
\newcommand{\Zp}{\Z_{\geq 0}}
\newcommand{\Zsp}{\Z_{> 0}}

\def\K{\mathcal{K}}%
\def\Kinf{\K_\infty}%
\def\KL{\mathcal{KL}}%
\def\KLL{\mathcal{KLL}}%
\def\dom{\mathrm{dom}}%
\def\C{\mathcal{C}}%
\def\D{\mathcal{D}}%
\def\H{\mathcal{H}}%
\def\A{\mathcal{A}}%
\def\X{\mathcal{X}}%
\def\U{\mathcal{U}}%
\DeclareMathOperator{\id}{id}

\DeclareMathOperator*{\esssup}{ess\,sup}

\ifCLASSINFOpdf
\else
\fi

\hyphenation{op-tical net-works semi-conduc-tor}
\begin{document}

\title{A characterization of integral input-to-state stability for hybrid systems\\
\vspace{0.5cm}\large{Technical Report}}


\author{Navid Noroozi$^\textrm{a,b}\qquad$\thanks{$^\textrm{a}$Faculty of Computer Science and Mathematics, Passau University, Innstra$\ss$e 33, 94032 Passau, Germany}\thanks{$^\textrm{b}$Department of Electrical Engineering, University of Shahreza, 86149-56841 Shahreza, Iran} Alireza Khayatian$^\textrm{c}\qquad$\thanks{$^\textrm{c}$School of Electrical and Computer Engineering, Shiraz University, Shiraz, Iran} Roman Geiselhart$^\textrm{d}$\thanks{Institute of Measurement, Control and Microtechnology, University of Ulm, Albert-Einstein-Allee 41, 89081 Ulm, Germany}
\thanks{This work was mostly done while N. Noroozi was at Shiraz University.}}

\maketitle

\begin{abstract}
This paper addresses characterizations of integral input-to-state stability (iISS) for hybrid systems. In particular, we give a Lyapunov characterization of iISS unifying and generalizing the existing theory for pure continuous-time and pure discrete-time systems. Moreover, iISS is related to dissipativity and detectability notions. Robustness of iISS to sufficiently small perturbations is also
investigated. As an application of our results, we provide a maximum allowable sampling period guaranteeing iISS for sampled-data control systems with an emulated controller.
\end{abstract}

\begin{IEEEkeywords}
Integral input-to-state stability, hybrid systems, Lyapunov characterizations
\end{IEEEkeywords}

\section{Introduction}
There have been considerable attempts toward stability analysis of nonlinear systems in the presence of exogenous inputs over the last few decades. In particular, Sontag \cite{Sontag.1989} introduced the notion of input-to-state stability (ISS) which is indeed a generalization of $H_\infty$ stability for nonlinear systems. Many applications of ISS in analysis and design of feedback systems have been reported \cite{Sontag.2008}. A variant of ISS notion was introduced in \cite{Sontag.1998} extending $H_2$ stability to nonlinear systems. This generalization is called integral input-to-state stability (iISS) which was studied for continuous-time systems in \cite{Angeli.2000}, followed by an investigation into iISS of discrete-time systems in \cite{Angeli.1999}. As long as we are interested in stability analysis with respect to compact sets, it has been established that iISS is a more general concept rather than ISS and so every ISS system is also iISS while the converse is not necessarily true \cite{Sontag.1998}.

There is a wide variety of dynamical systems that can not be simply described either by differential or difference equations. This gives rise to so-called hybrid systems that combine both continuous-time (flows) and discrete-time (jumps) behaviors. Significant contributions concerned with modeling of hybrid systems have been developed in \cite{Goebel.2012}. In particular, a framework was developed in \cite{Goebel.2012} which not only models a wide range of hybrid systems, but also allows the study of stability and robustness of such systems.

This paper investigates iISS for hybrid systems modeled by the framework in \cite{Goebel.2012}.
Although the notion of iISS is well-understood for switched and impulsive systems (cf. \cite{Mancilla-Aguilar.2001} and \cite{Hespanha.2008} for more details), to the best of our knowledge, no further generalization of iISS being applicable to a wide variety of hybrid systems has been developed yet.
Toward this end, we provide a Lyapunov characterization of iISS unifying and generalizing the existing theory for pure continuous-time and pure discrete-time systems.
Furthermore, we relate iISS to dissipativity and detectability notions.
We also establish robustness of  the iISS property to vanishing perturbations.
We finally illustrate the effectiveness of our results by application to determination of a maximum allowable sampling period (MASP) guaranteeing iISS for sampled-data systems with an emulated controller.
To be more precise, we show that if a continuous-time controller renders a closed-loop system iISS, the iISS property of the closed-loop control system is preserved under an emulation-based digital implementation if the sampling period is taken less than the corresponding MASP.

The rest of this paper is organized as follows: First we introduce our notation in Section \ref{sec:Notation}. In Section \ref{sec:HybridSystemsandStabilityDefinitions}, a description of hybrid systems, solutions, and stability notions are given. The main results are presented in Section \ref{sec:MainResults}. Section~\ref{sec:examples} gives the iISS property of sampled-data control systems. Section \ref{sec:conclusions} provides the concluding remarks.

\section{Notation}\label{sec:Notation}
In this paper, $\Rp$ ($\Rsp$) and $\Zp$ ($\Zsp$) are nonnegative (positive) real and nonnegative (positive) integer numbers, respectively.
$\mathbb{B}$ is the open unit ball in $\Rn$.
The standard Euclidean norm is denoted by $\abs{\cdot}$.
Given a set $\mathcal{A}\subset\Rn$, $\overline{\mathcal{A}}$ denotes its closure. $\abs{x}_\mathcal{A}$ denotes $\inf\limits_{y \in \mathcal{A}}\abs{x-y}$ for a closed set $\mathcal{A}\subset\Rn$ and any point $x \in \Rn$.
Given an open set $\mathcal{X} \subset \Rn$ containing a compact set $\mathcal{A}$, a function $\omega \colon \mathcal{X} \to \Rp$ is a proper indicator for $\mathcal{A}$ on $\mathcal{X}$ if $\omega$ is continuous, $\omega(x)=0$ if and only if $x\in\A$, and $\omega(x_i) \to +\infty$ when either $x_i$ tends to the boundary of $\mathcal{X}$ or $\abs{x_i} \to +\infty$.
The identity function is denoted by $\id$.
Composition of functions from $\R$ to $\R$ is denoted by the symbol $\circ$. 

A function $\alpha \colon \Rp \to \Rp$ is said to be positive definite ($\alpha \in \mathcal{PD}$) if it is continuous, zero at zero and positive elsewhere.
A positive definite function $\alpha \colon \Rp \to \Rp$ is of class-$\K$ ($\alpha \in \K$) if it is strictly increasing.
It is of class-$\Kinf$ ($\alpha \in \Kinf$) if $\alpha \in \mathcal{K}$ and also $\alpha(s) \to +\infty$ if $s \to \infty$.
A continuous function $\gamma$ is of class-$\mathcal{L}$ ($\gamma \in \mathcal{L}$) if it is nonincreasing and $\lim_{s \to +\infty} \gamma (s) \to 0$.
A function $\beta \colon \Rp \times \Rp \to \Rp$ is of class-$\KL$ ($\beta \in \KL$), if for each  $s \geq 0$, $\beta(\cdot,s) \in \K$, and for each $r \geq 0$, $\beta (r,\cdot) \in \mathcal{L}$.
A function $\beta \colon \Rp \times \Rp \times \Rp \to \Rp$ is of class-$\KLL$ ($\beta \in \KLL$), if for each $s \geq 0$, $\beta (\cdot,s,\cdot) \in \mathcal{KL}$ and $\beta (\cdot,\cdot,s) \in \KL$.
The interested reader is referred to~\cite{Kellett.2014} for more details about comparison functions.

\section{Hybrid Systems and Stability Definitions} \label{sec:HybridSystemsandStabilityDefinitions}

Consider the following hybrid system with state $x \in \X$ and input $u \in \mathcal{U} \subset \R^d$ as follows
\begin{equation} \label{eq:e0}
\H := \left\{ \begin{array}{lcclr}
 \dot{x} &=& f(x,u) & \quad(x,u) \in \C \\
  x^+ &=& g(x,u) & \quad(x,u) \in \D
 \end{array} \right. .
\end{equation}
The flow and jump sets are designated by $\C$ and $\D$, respectively. We denote the system (\ref{eq:e0}) by a 6-tuple $\mathcal{H}=(f,g,\mathcal{C},\mathcal{D},\X,\mathcal{U})$.
Basic regularity conditions borrowed from \cite{Cai.2009} are imposed on the system $\mathcal{H}$ as follows
\begin{itemize}
    \item [A1)] $\X \subset \Rn$ is open, $\mathcal{U} \subset \R^d$ is closed and $\C$ and $\D$ are relatively closed sets in $\X \times \mathcal{U}$.
	 \item [A2)] $f \colon \C \to \Rn$ and $g \colon \D \to \X$ are continuous.
\item [A3)] For each $x \in \X$ and each $\epsilon \geq 0$, the set $\{ f(x,u) \mid u \in \mathcal{U} \cap \epsilon \overline{\mathbb{B}} \}$ is convex.
\end{itemize}
Here, we refer to the assumptions A1) to A3) as \emph{Standing Assumptions}. We note that the Standing Assumptions guarantee the \textit{well-posedness} of $\H$ (cf. \cite[Chapter 6]{Goebel.2012} for more details). Throughout the paper we suppose that the Standing Assumptions hold except otherwise stated.

The following definitions are needed in the sequel.
A subset $E \subset \Rp \times \Zp$ is called a compact hybrid time domain if $E=\bigcup_{j=0}^{J} ([t_j , t_{j+1}],j)$ for some finite sequence of real numbers $0 = t_0 \leq \cdots \leq t_{J+1}$.
We say $E$ is a hybrid time domain if, for each pair $(T,J) \in E$, the set $E \cap ([0,T] \times \{ 0,1, \dots ,J \})$ is a compact hybrid time domain.
For each hybrid time domain $E$, there is a natural ordering of points: given $(t,j),(t^\prime,j^\prime) \in E$, $(t,j) \preceq (t^\prime,j^\prime)$ if $t+j \leq t^\prime + j^\prime$, and $(t,j) \prec (t^\prime,j^\prime)$ if $t+j < t^{\prime}+j^{\prime}$. Given a hybrid time domain $E$, we define
\begin{align}
& \ssup_t E := \sup \{  t \in \Rp \colon \exists \, j \in \Zp \textrm{ such that } (t,j) \in E \} , & \nonumber \\
& \ssup_j E := \sup \{  j \in \Zp \colon \exists \, t \in \Rp \textrm{ such that } (t,j) \in E \} , & \nonumber \\
& \mathrm{length} (E) := \ssup_t E + \ssup_j E . & \nonumber
\end{align}
The operations $\ssup_t$ and $\ssup_j$ on a hybrid time domain $E$ return the supremum of the $\R$ and $\Z$ coordinates, respectively, of points in $E$.
A function defined on a hybrid time domain is called a hybrid signal.
Given a hybrid signal $x : \dom x \to \X$, for any $s \in \left[ 0, \ssup_t \dom x \right] \backslash \{+\infty\}$, $i(s)$ denotes the maximum index $i$ such that $(s,i) \in \dom x$, that is, $i(s) := \max \{ i \in \Zp \colon (s,i) \in \dom x \}$. A hybrid signal $x \colon \dom \,x \to \X$ is a hybrid arc if for each $j \in \Zp$, the function $t \mapsto z(t,j)$ is locally absolutely continuous on the interval $I^{j} := \{ t \colon (t,j) \in \dom x \}$.
A hybrid signal $u \colon \mathrm{dom} \, u \to \mathcal{U}$ is a hybrid input if for each $j \in \Zp$, $u(\cdot,j)$ is Lebesgue measurable and locally essentially bounded. 

Let a hybrid signal $v \colon \mathrm{dom} \, v \to \Rn$ be given. Let $(0,0), (t,j) \in \mathrm{dom} \, v$ such that $(0,0) \prec (t,j)$ and $\Gamma (v)$ denotes the set of $(t^{\prime},j^{\prime}) \in \mathrm{dom} \, v$ so that $(t^{\prime},j^{\prime}+1) \in \mathrm{dom} \, v$. Define
\begin{align}
\norm{v_{(t,j)}}_\infty \!\!:=\! \max \!\Big\{\! & \esssup_{\scriptsize{\begin{array}{c}(t^\prime,j^\prime) \in \dom v \backslash \Gamma (v),\\ (0,0) \preceq (t^\prime,j^\prime) \preceq (t,j) \end{array}}} \!\!\!\!\!\!\!\!\!\!\!\!\abs{v(t^\prime,j^\prime)} ,  \sup_{\scriptsize{\begin{array}{c}(t^\prime,j^\prime) \in \Gamma (v), \\(0,0) \preceq (t^\prime,j^\prime) \preceq (t,j)\end{array}}} \!\!\!\!\!\!\!\!\!\!\!\! \abs{v(t^\prime,j^\prime)} \Big\} . & \nonumber
\end{align}

Let $\gamma_1,\gamma_2 \in \K$ and let $u \colon \dom\, u \to \mathcal{U}$ be a hybrid input such that for all $(t,j) \in \dom \,u$ the following holds
\begin{align}
& \norm{u_{(t,j)}}_{\gamma_1,\gamma_2} := \int_0^t \gamma_1 (\abs{u(s,i(s))}) \mathrm{d} s + \!\!\!\!\!\!\!\!\!\!\!\! \sum_{\scriptsize{\begin{array}{c}(t^\prime,j^\prime) \in \Gamma(u),\\(0,0) \preceq (t^{\prime},j^{\prime}) \prec (t,j)\end{array}}} \!\!\!\!\!\!\!\!\!\!\!\!\gamma_2 (\abs{u(t^\prime,j^\prime)})  < + \infty . & \nonumber
\end{align}
We denote the set of all such hybrid inputs by $\mathcal{L}_{\gamma_1,\gamma_2}$. Also, if
$\norm{u_{(t,j)}}_{\gamma_1,\gamma_2} < r$ for some $r > 0$ and all $(t,j) \in \dom \,u$, we write $u \in \mathcal{L}_{\gamma_1,\gamma_2} (r)$. Assume that the hybrid input $u \colon \dom \,u \to \mathcal{U}$. For each $T \in \left[ 0, \mathrm{length} (\dom \,u) \right] \backslash \{+\infty\}$, the hybrid input $u_T \colon \dom \,u \to \mathcal{U}$ is defined by
\begin{eqnarray}
u_T (t,j) = \left\{ \begin{array}{lr}
 u(t,j) & \qquad t+j \leq T \\
 0 & \qquad t+j > T 
 \end{array} \right. \nonumber
\end{eqnarray}
and is called the $T$-truncation of $u$. The set $\mathcal{L}_{\gamma_1,\gamma_2}^e$ $\left(\mathcal{L}_{\gamma_1,\gamma_2}^e (r)\right)$ consists of all hybrid inputs $u(\cdot,\cdot)$ with the property that for all $T \in [0,\infty)$, $u_T \in \mathcal{L}_{\gamma_1,\gamma_2} \left(u_T \in \mathcal{L}_{\gamma_1,\gamma_2} (r) \right)$, and is called the extended $\mathcal{L}_{\gamma_1,\gamma_2}$-space.

A hybrid arc $x \colon \dom\,x \to \X$ and a hybrid input $u \colon \dom\,u \to \mathcal{U}$ is a solution pair $(x,u)$ to $\H$ if $\dom x = \dom u$, $(x(0,0),u(0,0)) \in \C \cup \D$, and
\begin{itemize}
\item for each $j \in \Zp$, $(x(t,j),u(t,j)) \in \C$ and $\dot{x}=f(x(t,j),u(t,j))$ for almost all $t \in I^j$ where $I^{j}$ has nonempty interior;
\item for all $(t,j) \in \Gamma(x)$, $(x(t,j),u(t,j)) \in \D$ and $x(t,j+1)=g(x(t,j),u(t,j))$.
\end{itemize}
A solution pair $(x,u)$ to $\H$ is maximal if it cannot be extended, it is complete if $\dom\,x$ is unbounded. A maximal solution to $\mathcal{H}$ with the initial condition $\xi := x(0,0)$ and the input $u$ is denoted by $x(\cdot,\cdot,\xi,u)$. The set of all maximal solution pairs $(x,u)$ to $\H$ with $\xi := x(0,0) \in \X$ is designated by $\varrho^u (\xi)$.

\subsection{Stability Notions}

Given the system $\H$ and a nonempty and compact $\A \subset \X$, then $\A$ is called
\begin{itemize}
\item 0-input pre-stable if for any $\epsilon > 0$ there exists $\delta > 0$ such that each solution pair $(x,0) \in \varrho^u (\xi)$ with $\abs{\xi}_\A \leq \delta$ satisfies $\abs{x(t,j,\xi,0)}_\A \leq \epsilon$ for all $(t,j) \in \dom\,x$.
\item 0-input pre-attractive if there exists $\delta > 0$ such that each solution pair $(x,0) \in \varrho^u (\xi)$ with $\abs{\xi}_\A \leq \delta$ is bounded (with respect to $\X$) and if it is complete then $\lim_{(t,j) \in \dom \, x , \, t+j \to +\infty}$ $\abs{x(t,j,\xi,0)}_\A \to 0$. 
\item 0-input pre-asymptotically stable (pre-AS) if it is both 0-input pre-stable and 0-input pre-attractive.
\item 0-input asymptotically stable (AS) if it is 0-input pre-AS and there exists $\delta > 0$ such that each solution pair $(x,0) \in \varrho^u (\xi)$ with $\abs{\xi}_\A \leq \delta$ is complete.
\end{itemize}
It should be noted that the prefix "pre-" emphasizes that not every solution requires to be complete. If all solutions are complete, then we drop the pre.
\begin{definition} \label{D:1}
Let $\A \subset \X$ be a compact set.
Also, let $\omega$ be a proper indicator for~$\A$ on~$\X$.
The hybrid system $\H$ is said to be pre-integral input-to-state stable $($pre-iISS$)$ with respect to $\A$ if there exist $\alpha \in \Kinf$, $\gamma_1,\gamma_2 \in \K$ and $\tilde{\beta} \in \KLL$ such that for all $u \in \mathcal{L}_{\gamma_1,\gamma_2}^e$, all $\xi \in \X$, and all $(t,j) \in \dom \, x$, each solution pair $(x,u)$ to $\H$ satisfies
\begin{align} \label{eq:e1}
& \alpha(\omega(x(t,j,\xi,u))) \leq \tilde{\beta} (\omega(\xi),t,j) + \norm{u_{(t,j)}}_{\gamma_1,\gamma_2} . &
\end{align}
\hfill$\Box$
\end{definition}
\begin{remark}
We point out that $\alpha$ on the left-hand side of \eqref{eq:e1} is redundant. In particular, $\H$ is pre-iISS with respect to $\A$ if and only if there exist $\eta,\gamma_1,\gamma_2 \in \K$ and $\beta \in \KLL$ satisfying
\begin{align*}
& \omega(x(t,j,\xi,u)) \leq \beta (\omega(\xi),t,j) + \eta\left(\norm{u_{(t,j)}}_{\gamma_1,\gamma_2}\right) . &
\end{align*}
We, however, place emphasis on \eqref{eq:e1} for two reasons: firstly, \eqref{eq:e1} is consistent with the continuous-time and discrete-time counterparts in \cite{Angeli.2000,Angeli.1999}. Secondly, \eqref{eq:e1} simplifies exposition of proofs.
\hfill$\Box$
\end{remark}
\begin{definition} \label{D:2}
Given a compact set $\A \subset \X$, let $\omega$ be a proper indicator for $\A$ on $\X$. A smooth function $V \colon \X \to \Rp$ is called an iISS-Lyapunov function with respect to $(\omega,\abs{\cdot})$ for (\ref{eq:e0}) if there exist functions $\alpha_1,\alpha_2 \in \Kinf$, $\sigma \in \K$, and $\alpha_3 \in \mathcal{PD}$ such that
\begin{align}
\alpha_1 (\omega(\xi)) & \leq V(\xi) \leq \alpha_2 (\omega(\xi)) \quad\,\qquad  \forall \xi \in \X, \label{eq:e3} \\
\langle \nabla V (\xi),f(\xi,u) \rangle & \leq -\alpha_{3}(\omega(\xi)) + \sigma(\abs{u}) \qquad \forall (\xi,u) \in \C, \label{eq:e4} \\
V (g(\xi,u)) - V (\xi) & \leq - \alpha_{3}(\omega(\xi)) + \sigma(\abs{u}) \qquad \forall (\xi,u) \in \D . \label{eq:e5}
\end{align}
\hfill$\Box$
\end{definition}
\begin{definition} \label{D:3} (\cite{Angeli.2000})
A positive definite function $W \colon \X \to \Rp$ is called a semi-proper if there exist $\pi \in \K$, and a proper positive definite function $W_0$ such that $W (\cdot) = \pi ( W_0 (\cdot) )$.
\hfill$\Box$
\end{definition}

The following definitions are required to relate pre-iISS to the hybrid invariance principle \cite{Goebel.2012}. 

\begin{definition} \label{D:5} (\cite[Definition 6.2]{Sanfelice.2007})
Given sets $\A , K \subset \X$, the distance to $\A$ is 0-input detectable relative to $K$ for $\H$ if every complete solution pair $(x,0)$ to $\H$ such that $x(t,j) \in K$ for all $(t,j) \in \dom x$ implies that $\lim_{(t,j)\to+\infty,(t,j)\in \dom x} \omega(x(t,j)) = 0$ where $\omega$ is a proper indicator for $\A$ on $\X$.
\hfill$\Box$
\end{definition}

\begin{definition} \label{D:6}
Let $\omega$ be a proper indicator for $\A$ on $\X$. $\H$ is said to be smoothly dissipative with respect to $\A$ if there exists a smooth function $V \colon \X \to \Rp$, called a storage function, functions $\alpha_4,\alpha_5 \in \Kinf$, $\sigma \in \K$, and a continuous function $\rho : \X \to \Rp$ with $\rho(\xi) = 0$ for all $\xi \in \A$ such that
\begin{eqnarray}
\alpha_4 (\omega(\xi)) & \leq & V(\xi) \leq \alpha_5 (\omega(\xi)) \quad \forall \xi \in \X , \label{eq:e03} \\
\langle \nabla V (\xi),f(\xi,u) \rangle & \leq & - \rho (\xi) + \sigma(\abs{u}) \quad \forall (\xi,u) \in \C , \label{eq:e04} \\
V(g(\xi,u)) - V(\xi) & \leq & - \rho (\xi) + \sigma(\abs{u}) \quad \forall (\xi,u) \in \D . \label{eq:e05}
\end{eqnarray}
\hfill$\Box$
\end{definition}

We note that Definition~\ref{D:6} subsumes Definition~\ref{D:2} as a special case. As we will see later (cf. Theorem~\ref{T:1}), the existence of a storage function $V$ plus the 0-input detectability relative to $K$ is equivalent to the existence of an iISS-Lyapunov function.

\section{Main Results}\label{sec:MainResults}

This section addresses equivalences for pre-iISS. Particularly, a Lyapunov characterization of pre-iISS together with other related notions is presented.
Before proceeding further, we recall~\cite[Lemma IV.1]{Angeli.2000} on positive definite functions, which is used later.
\begin{lemma}\label{L:1}
Given $\rho \in \mathcal{PD}$, there exist $\rho_1 \in \Kinf$ and $\rho_2 \in \mathcal{L}$ such that $\rho (r) \geq \rho_1 (r) \rho_2 (r)$, $\forall r \geq 0$.
\hfill$\Box$
\end{lemma}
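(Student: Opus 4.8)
The plan is to fix a suitable $\rho_1\in\Kinf$ first and then \emph{define} $\rho_2$ as a (capped) normalized running infimum of $\rho$, so that the desired inequality becomes automatic. Concretely, for any $\rho_1\in\Kinf$ set
\[
\rho_2(r):=\min\Big\{1,\ \inf_{0<s\le r}\frac{\rho(s)}{\rho_1(s)}\Big\},\qquad r>0,
\]
and $\rho_2(0):=\lim_{r\to0^+}\rho_2(r)$. Since $\rho_2(r)\le\inf_{0<s\le r}\rho(s)/\rho_1(s)\le\rho(r)/\rho_1(r)$, where the last step takes $s=r$ inside the infimum, we obtain $\rho_1(r)\rho_2(r)\le\rho(r)$ for every $r\ge0$, which is precisely the claimed bound. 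By construction $\rho_2$ is nonincreasing and bounded by $1$, and since $s\mapsto\rho(s)/\rho_1(s)$ is continuous and positive on $(0,\infty)$ the running infimum, and hence $\rho_2$, is continuous; the truncation at $1$ moreover keeps $\rho_2$ finite and continuous at the origin. Thus the entire burden is shifted onto choosing $\rho_1$ so that $\rho_2\in\mathcal{L}$, i.e. so that $\rho_2$ is positive on $(0,\infty)$ and $\rho_2(r)\to0$ as $r\to+\infty$.

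These two requirements translate into two conditions on $\rho_1$: (i) near the origin $\rho_1$ must be dominated by $\rho$, so that $\rho(s)/\rho_1(s)$ stays bounded away from $0$ and $\rho_2$ does not collapse; and (ii) for large arguments $\rho_1$ must grow fast enough that $\rho(s)/\rho_1(s)$ becomes arbitrarily small along some sequence $s\to+\infty$, forcing $\inf_{s>0}\rho(s)/\rho_1(s)=0$ and hence $\rho_2(r)\to0$. For (i) I would use the lower envelope $\underline\rho(r):=\min_{r\le s\le1}\rho(s)$ on $[0,1]$, which is continuous, nondecreasing, vanishes at $0$, and is positive on $(0,1]$ because $\rho\in\mathcal{PD}$; picking a continuous, strictly increasing minorant of $\underline\rho$ gives the germ of $\rho_1$ with $\rho_1\le\rho$ on $(0,1]$, so that $\rho(s)/\rho_1(s)\ge1$ there. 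For (ii) I would continue $\rho_1$ on $[1,\infty)$ to a continuous, strictly increasing, unbounded function satisfying $\rho_1(k)\ge k\,\rho(k)$ at the integers $k$, which is achievable using the running maximum $M(r):=\max_{1\le s\le r}\rho(s)$ (e.g. by adding a term of order $(r-1)+r\,M(r)$), so that $\rho(k)/\rho_1(k)\le 1/k\to0$. Patching the two pieces continuously at $r=1$ then yields $\rho_1\in\Kinf$.

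With such a $\rho_1$ the verification is routine: positivity of $\rho_2$ on $(0,\infty)$ follows from $\rho(s)/\rho_1(s)\ge1$ near $0$ together with positivity and continuity of the ratio on each compact $[\epsilon,r]$; the limit $\rho_2(r)\to0$ follows from condition (ii); and nonincreasingness, boundedness, and continuity were noted above, so $\rho_2\in\mathcal{L}$. The main obstacle is entirely in the construction of $\rho_1$: because $\rho$ is merely positive definite it may be non-monotone, may decay, or may exhibit arbitrarily tall spikes, so one cannot simply take $\rho_1(r)=r$ (this already fails near $0$ whenever $\rho$ behaves quadratically there). Taming this behaviour via the one-sided envelopes $\underline\rho$ and $M$ --- staying below $\rho$ near the origin while dominating $\rho$ along a sequence at infinity --- is the crux; once $\rho_1$ is in hand, the capped normalized-infimum definition of $\rho_2$ makes both the inequality and the class-$\mathcal{L}$ membership fall out automatically.
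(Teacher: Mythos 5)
Your proof is correct, but note first that the paper itself contains no proof of Lemma~\ref{L:1}: it is recalled verbatim from \cite[Lemma IV.1]{Angeli.2000}, so the only ``proof'' in the paper is that citation, and your argument is a self-contained derivation of the cited fact. The structure is sound: defining $\rho_2(r)=\min\{1,\inf_{0<s\le r}\rho(s)/\rho_1(s)\}$ makes the inequality $\rho_1\rho_2\le\rho$ automatic for \emph{any} $\rho_1\in\Kinf$, and correctly shifts the whole burden onto the two requirements you isolate: (i) $\rho_1\le\rho$ on a neighborhood $(0,1]$ of the origin, which forces the ratio to be $\ge 1$ there and hence gives both positivity of $\rho_2$ on $(0,\infty)$ (combined with compactness of $[1,r]$) and $\rho_2\equiv 1$ near $0$; and (ii) growth of $\rho_1$ dominating $k\,\rho(k)$ along integers, which forces $\inf_{s>0}\rho(s)/\rho_1(s)=0$ and hence $\rho_2(r)\to 0$. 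Both are achievable exactly as you indicate, via the envelopes $\underline\rho(r)=\min_{r\le s\le 1}\rho(s)$ (for instance $r\,\underline\rho(r)$ is a continuous, strictly increasing minorant vanishing at $0$) and $M(r)=\max_{1\le s\le r}\rho(s)$; and your claim that the running infimum of a continuous function is continuous withstands the one-line check of left and right limits. Your construction also delivers positivity of $\rho_2$, which the paper's literal definition of class-$\mathcal{L}$ omits but which is essential both for the lemma to be nontrivial and for its later use (the proof of Lemma~\ref{L:8} divides by $\rho_2(w(t_0))$). Two cosmetic slips, neither fatal: the parenthetical recipe ``adding a term of order $(r-1)+r\,M(r)$'' to the germ value $\rho_1(1)$ is discontinuous at the junction $r=1$, since $r\,M(r)$ equals $\rho(1)>0$ there; take instead, e.g., $\rho_1(r)=\rho_1(1)+(r-1)\left(1+M(r)\right)$ for $r\ge 1$, which is continuous, strictly increasing, unbounded, and yields $\rho(k)/\rho_1(k)\le 1/(k-1)\to 0$. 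Relatedly, imposing $\rho_1(k)\ge k\,\rho(k)$ at $k=1$ would contradict the minorant condition $\rho_1(1)\le\rho(1)$ unless equality holds, so the growth condition should start at $k=2$, as the corrected formula does automatically.
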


The lemma below is a generalization of \cite[Lemma IV.2]{Angeli.2000} for hybrid systems, that is used to give the proof of our main result (see Section~\ref{S:1}).  
\begin{lemma} \label{L:2}
Let $\rho \in \mathcal{PD}$ with $\rho(r) < r$ for all $r > 0$, and $z \colon \dom z \to \R$ be a hybrid arc with $z(0,0) \geq 0$. Consider a hybrid signal $v \colon \dom v \to \Rp$ such that $\dom v = \dom z$ and for each $j$, $v(\cdot,j)$ is continuous. Furthermore, assume that
\begin{itemize}
\item for almost all $t$ such that $(t,j) \in \dom z \backslash \Gamma(z)$
\begin{equation} \label{eq:e6}
\dot{z}(t,j) \leq -\rho (\mathrm{max} \{ z(t,j) + v(t,j) , 0 \})
\end{equation}
\item for all $(t,j) \in \Gamma(z)$ it holds that
\begin{equation} \label{eq:e7} 
z(t,j+1) - z(t,j) \leq -\rho (\mathrm{max} \{ z(t,j) + v(t,j) , 0 \}) .
\end{equation}
\end{itemize}
Then, there exists $\beta \in \mathcal{KLL}$ such that 
\begin{equation} \label{eq:e8}
z(t,j) \leq \mathrm{max} \{ \beta ( z (0,0), t , j ),\norm{v_{(t,j)}}_\infty \}  \qquad \forall (t,j) \in \dom \, z .
\end{equation}
\end{lemma}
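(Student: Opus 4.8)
The plan is to reduce the hybrid estimate to two decoupled comparison estimates — one in ordinary time $t$ and one in the jump counter $j$ — and then combine them. The first observation is that, since $v \geq 0$ and $\rho \in \mathcal{PD}$ is nonnegative, both \eqref{eq:e6} and \eqref{eq:e7} force $\dot z \leq 0$ and $z(t,j+1) \leq z(t,j)$; hence $z$ is nonincreasing along hybrid time. Fixing an arbitrary terminal point $(t,j) \in \dom z$ and writing $M := \norm{v_{(t,j)}}_\infty$, I would split into two cases. If $z(t,j) \leq M$ then \eqref{eq:e8} holds trivially because its right-hand side dominates $M$. Otherwise $z(t,j) > M$, and monotonicity of $z$ together with $M \geq \abs{v(t',j')}$ for all $(t',j') \preceq (t,j)$ yields $z(t',j') \geq z(t,j) > M \geq v(t',j') \geq 0$ on the whole preceding interval.

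On that interval I then obtain the crucial two-sided sandwich $z \leq z + v < 2z$, so that $\max\{z+v,0\} = z + v$. Invoking Lemma~\ref{L:1} to write $\rho(r) \geq \rho_1(r)\rho_2(r)$ with $\rho_1 \in \Kinf$ and $\rho_2 \in \mathcal{L}$, monotonicity of $\rho_1$ and of $\rho_2$ gives $\rho(z+v) \geq \rho_1(z)\rho_2(2z) =: \tilde\rho(z)$, where $\tilde\rho \in \mathcal{PD}$ satisfies $\tilde\rho(r) \leq \rho(r) < r$. Thus on the preceding interval $z$ obeys the input-free differential and difference inequalities $\dot z \leq -\tilde\rho(z)$ and $z(t,j+1) - z(t,j) \leq -\tilde\rho(z(t,j))$, and it remains to produce a $\KLL$ bound for such a decoupled hybrid arc.

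For the decoupled system I would construct two $\KL$ envelopes. Since jumps only decrease $z$, a monotone comparison against the scalar flow $\dot y = -\tilde\rho(y)$, $y(0) = z(0,0)$, yields $z(t,j) \leq \beta_c(z(0,0),t)$ for some $\beta_c \in \KL$; symmetrically, since flows only decrease $z$, the difference inequality yields $z(t,j) \leq \beta_d(z(0,0),j)$ for some $\beta_d \in \KL$. Taking $\beta(r,t,j) := \min\{\beta_c(r,t),\beta_d(r,j)\}$ gives a function that is $\KL$ in $(r,t)$ for each fixed $j$ and $\KL$ in $(r,j)$ for each fixed $t$ — hence $\beta \in \KLL$ — and that bounds $z(t,j)$ on the preceding interval, in particular at the terminal point. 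Combining the two cases establishes \eqref{eq:e8}.

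The main obstacle I anticipate is the construction of the two scalar envelopes, because $\tilde\rho$ is merely positive definite: the map $r \mapsto r - \tilde\rho(r)$ need not be monotone, so the naive induction for the discrete envelope $\beta_d$ (and, for a non-Lipschitz $\tilde\rho$, the flow comparison for $\beta_c$) does not close directly. I would handle this by minorizing $\tilde\rho$ on the compact range $[0,z(0,0)]$ by a class-$\K$ function $\kappa \leq \tilde\rho$ with $\id - \kappa \in \Kinf$ (a sub-unit-slope minorant obtained, e.g., by inf-convolution), for which $r \mapsto r - \kappa(r)$ is nondecreasing; the standard continuous- and discrete-time comparison lemmas (cf.\ \cite{Angeli.2000,Angeli.1999}) then apply and deliver $\beta_c$ and $\beta_d$. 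The remaining step, verifying that the pointwise minimum of two $\KL$ functions lies in $\KLL$, is routine from the definitions of $\mathcal{L}$ and $\KL$.
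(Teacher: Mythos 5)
Your proposal is correct in substance, and its first half---the nonincreasing property of $z$, the case split on whether $z(t,j)$ exceeds $\norm{v_{(t,j)}}_\infty$, the sandwich $0 \leq z \leq z+v \leq 2z$ on the preceding window, and the factorization $\rho(z+v) \geq \rho_1(z)\rho_2(2z) =: \tilde\rho(z)$ via Lemma~\ref{L:1}---coincides exactly with the paper's proof in Appendix~\ref{Ap:E}. Where you genuinely depart is the final step. The paper feeds the decoupled inequalities into a dedicated hybrid comparison lemma (Lemma~\ref{L:7} in Appendix~\ref{Ap:A}), whose proof concatenates continuous-time and discrete-time comparison estimates (Lemmas~\ref{L:8} and~\ref{L:9}) interval by interval, using the semigroup property \eqref{eq:e82} of the flow of $\dot w = -\rho_1(w)$ and the monotonicity $\rho_2(w(t,j)) \geq \rho_2(w(0,0))$; this yields the single bound $\tilde\beta(z(0,0),t,j) = \beta\bigl(z(0,0),\rho_2(z(0,0))(t+j)\bigr)$, decaying jointly in $t+j$, together with the semigroup-type properties \eqref{eq:e24}--\eqref{eq:e29}. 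You instead project onto the two time directions separately---flows decay while jumps do not increase, giving $z(t,j)\leq\beta_c(z(0,0),t)$; jumps decay while flows do not increase, giving $z(t,j)\leq\beta_d(z(0,0),j)$---and take the pointwise minimum, which does lie in $\KLL$ and still vanishes as $t+j\to\infty$ since $t+j\to\infty$ forces $\max\{t,j\}\to\infty$. Your route is more elementary (no concatenation or semigroup bookkeeping) and correctly isolates the real obstruction, namely that $\id-\tilde\rho$ need not be monotone, so the jump recursion does not close without a sub-unit-slope minorant; the paper's route buys the stronger joint decay rate in $t+j$ and a reusable comparison lemma of independent interest. Two points in your sketch need care, though neither is fatal. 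First, a class-$\K$ minorant of $\tilde\rho$ exists only on compact ranges (a $\mathcal{PD}$ function may vanish at infinity), and a minorant built on $[0,z(0,0)]$ makes $\kappa$, hence $\beta_c$ and $\beta_d$, depend on the particular arc; since Lemma~\ref{L:2} is invoked in Subsection~\ref{S:1} with one $\beta$ serving \emph{all} solution pairs, you should either use the global Pasch--Hausdorff minorant $\kappa(s):=\inf_{\tau\geq 0}\bigl\{\tilde\rho(\tau)+\abs{s-\tau}\bigr\}$, which is $1$-Lipschitz and positive definite (not class-$\K$, but that is all the scalar comparison arguments require), or absorb the initial-condition dependence the way Lemmas~\ref{L:8}--\ref{L:9} do, through the rescaling by $\rho_2(z(0,0))$. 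Second, ``$\id-\kappa\in\Kinf$'' should read ``$\id-\kappa$ nondecreasing''; that weaker property is exactly what closes the induction across alternating flows and jumps.
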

\begin{proof} See Appendix~\ref{Ap:E}. \end{proof}
Given a set $S \subset \X \times \U$, we denote $\Pi_0 (S) := \{ x \in \X : (x,0) \in S \}$. Here is the main result of this paper. 
\begin{theorem} \label{T:1}
Let $\A \subset \X$ be a compact set.
Also, let $\omega$ be a proper indicator for $\A$ on $\X$.
Suppose that the Standing Assumptions hold.
Also, assume that $\Pi_0 (\C) \cup \Pi_0 (\D) = \X$. Then the following are equivalent
\begin{enumerate}[{(}i{)}]
    \item\label{item:iiss} $\mathcal{H}$ is pre-iISS with respect to $\A$.
    \item\label{item:lyapunov-iiss} $\mathcal{H}$ admits a smooth iISS-Lyapunov function with respect to $(\omega,\abs{\cdot})$.
    \item\label{item:0-detect} $\mathcal{H}$ is smoothly dissipative with respect to $\A$ and the distance to $\A$ is 0-input detectable relative to $\{ \xi \in \X \colon \rho (\xi) = 0 \}$ with $\rho$ as in \eqref{eq:e04} and \eqref{eq:e05}.

    \item\label{item:0-gas} $\mathcal{H}$ is 0-input pre-AS and $\mathcal{H}$ is smoothly dissipative with respect to $\A$ with $\rho \equiv 0$.
\end{enumerate}
\end{theorem}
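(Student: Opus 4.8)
The plan is to prove the four statements equivalent around the cycle (ii)$\Rightarrow$(i)$\Rightarrow$(iv)$\Rightarrow$(iii)$\Rightarrow$(ii). The concrete anchor is the sufficiency edge (ii)$\Rightarrow$(i), where a Lyapunov function yields the iISS estimate; this is the one place where Lemmas~\ref{L:1} and~\ref{L:2} are spent. All the genuine difficulty is then confined to the two constructive edges (i)$\Rightarrow$(iv) and (iii)$\Rightarrow$(ii).

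For (ii)$\Rightarrow$(i) I would take a smooth iISS-Lyapunov function $V$ and first recast its $\mathcal{PD}$ decrease rate as a rate expressed through $V$ itself. Applying Lemma~\ref{L:1} to write $\alpha_3(r)\ge\rho_1(r)\rho_2(r)$ with $\rho_1\in\Kinf$, $\rho_2\in\mathcal L$, and using the two-sided bound $\alpha_2^{-1}(V(\xi))\le\omega(\xi)\le\alpha_1^{-1}(V(\xi))$ from \eqref{eq:e3} together with $\rho_1$ increasing and $\rho_2$ nonincreasing, one gets $\alpha_3(\omega(\xi))\ge\rho_1(\alpha_2^{-1}(V(\xi)))\,\rho_2(\alpha_1^{-1}(V(\xi)))=:\hat\alpha_3(V(\xi))$ with $\hat\alpha_3\in\mathcal{PD}$. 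Hence \eqref{eq:e4}--\eqref{eq:e5} yield $\dot V\le-\hat\alpha_3(V)+\sigma(\abs u)$ on flows and $V(g(\xi,u))-V(\xi)\le-\hat\alpha_3(V)+\sigma(\abs u)$ at jumps. Introducing the running input energy $E(t,j):=\int_0^t\sigma(\abs{u(s,i(s))})\,\mathrm{d}s+\sum_{(t',j')\prec(t,j)}\sigma(\abs{u(t',j')})$ and the shift $z:=V(x)-E$, the $\sigma(\abs u)$ terms cancel and, since $V=z+E\ge0$, I obtain $\dot z\le-\hat\alpha_3(\max\{z+E,0\})$ with the matching jump inequality (shrinking $\hat\alpha_3$ if needed so that $\hat\alpha_3(r)<r$). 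As $E\ge0$ is nondecreasing and continuous in $t$ for fixed $j$, Lemma~\ref{L:2} with $v=E$ gives $z(t,j)\le\max\{\beta(V(\xi),t,j),E(t,j)\}$, whence $V(x(t,j))=z+E\le\beta(V(\xi),t,j)+2E(t,j)$. Combined with \eqref{eq:e3} this is exactly \eqref{eq:e1} for $\alpha=\alpha_1$, $\tilde\beta(\cdot,t,j)=\beta(\alpha_2(\cdot),t,j)\in\KLL$ and $\gamma_1=\gamma_2=2\sigma\in\K$.

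The cheap edges close most of the cycle. Putting $u\equiv0$ in \eqref{eq:e1} gives a $\KLL$-estimate for the unforced system, hence (global) $0$-input pre-AS, which supplies the attractivity half of (iv). For (iv)$\Rightarrow$(iii): a storage function with $\rho\equiv0$ is in particular smoothly dissipative, and with $\rho\equiv0$ the set $\{\rho=0\}$ is all of $\X$, so the global $0$-input attractivity contained in pre-AS is verbatim $0$-input detectability of the distance to $\A$ relative to $\X$. The same remark shows that an iISS-Lyapunov function yields (iii) at once, since then $\rho=\alpha_3\in\mathcal{PD}$ forces $\{\rho=0\}=\A$, on which detectability is vacuous.

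The two substantive edges are (i)$\Rightarrow$(iv) and (iii)$\Rightarrow$(ii). For the first it remains to manufacture a smooth proper storage function with $\rho\equiv0$ out of the bare iISS estimate: I would invoke a smooth converse Lyapunov theorem for well-posed hybrid systems applied to the $0$-input pre-AS system to obtain $V_0$ decreasing along the unforced flow and jump maps, and then bound the influence of $u$ through the regularity and convexity hypotheses A1)--A3). The genuine obstacle, and the feature distinguishing iISS from ISS, is that the input must enter as a \emph{state-independent} gain $\sigma(\abs u)$, which forces a careful reshaping of $V_0$ so that the cross terms $\langle\nabla V_0,f(\xi,u)-f(\xi,0)\rangle$ and $V_0(g(\xi,u))-V_0(g(\xi,0))$ are dominated by a single $\sigma(\abs u)$ \emph{uniformly} in $\xi\in\X$. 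For (iii)$\Rightarrow$(ii) the merely positive-semidefinite rate $\rho$ of the storage function must be upgraded to a $\mathcal{PD}$ rate $\alpha_3(\omega(\cdot))$; this is where $0$-input detectability relative to $\{\rho=0\}$ is spent, via the hybrid invariance principle underlying Definition~\ref{D:5}, which ensures that solutions confined to $\{\rho=0\}$ still approach $\A$ and thereby licenses a Mazenc-type correction of $V$ producing a strictly positive definite decrease. Carrying out either construction simultaneously for flows, jumps and measurable inputs, while keeping every comparison function free of the state, is where I expect the bulk of the technical effort to lie.
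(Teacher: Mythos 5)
Your edge (ii) $\Rightarrow$ (i) is essentially the paper's own argument (Lemma~\ref{L:1}, the shift $z = V - v$, Lemma~\ref{L:2}), and that part stands. The genuine gap is in the converse construction. Your mechanism for (i) $\Rightarrow$ (iv) --- take a converse Lyapunov function $V_0$ for the $0$-input system and ``reshape'' it until the cross terms $\langle \nabla V_0(\xi), f(\xi,u)-f(\xi,0)\rangle$ and $V_0(g(\xi,u))-V_0(g(\xi,0))$ are dominated by a state-independent $\sigma(\abs{u})$ --- is precisely the naive route that fails, and it is not the paper's route. A storage function must stay proper, so $\nabla V_0$ is unbounded on $\X$; $f$ is merely continuous, and nothing in A1)--A3) makes $f(\xi,u)-f(\xi,0)$ decay in $\xi$. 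Composing $V_0$ with a slowly increasing $\pi \in \Kinf$ shrinks the gradient, but the quantity $\sup_{\xi} \pi'(V_0(\xi)) \langle \nabla V_0 (\xi), f(\xi,u)-f(\xi,0)\rangle$ can at best be capped by a constant on the unbounded part of the state space; it cannot be forced to vanish as $\abs{u} \to 0$ uniformly in $\xi$, which is exactly what a gain $\sigma(\abs{u})$ demands. This is why the paper (Subsection~\ref{S:2}) never builds the candidate function from the unforced system: it first proves that pre-iISS is robust to small state-dependent inflations (Theorem~\ref{P:02}, all of Appendix~\ref{Ap:H}); then restricts inputs by $\abs{u} \leq \varphi(\omega(x))$ to form the auxiliary inclusion $\hat\H$ and its inflation $\hat\H_\sigma$; then defines $V_0$ in \eqref{eq:e53}--\eqref{eq:e54} as a value function built from the iISS estimate itself, so that the growth bounds \eqref{eq:e57}--\eqref{eq:e58} hold \emph{by construction} rather than by domination; then smooths it with the Cai--Teel machinery, for which the robustness theorem is indispensable (the Lin--Sontag--Wang smoothing based on uniform convergence of solutions is unavailable for hybrid systems); and finally removes the input restriction with the $\kappa$-construction over the compact region $\omega(\xi) \leq \chi(\abs{u})$. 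None of these ingredients appear in your sketch; acknowledging that the ``bulk of the technical effort'' lies there does not supply the robustness theorem and the value function, which are the actual content of the converse direction.

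Your cycle also breaks at the edge you call cheap, (iv) $\Rightarrow$ (iii). In this paper $0$-input pre-AS is a \emph{local} property: there exists $\delta > 0$ such that solutions with $\abs{\xi}_\A \leq \delta$ are bounded and, if complete, converge to $\A$. With $\rho \equiv 0$, however, $\{\xi \in \X \colon \rho(\xi) = 0\} = \X$, so Definition~\ref{D:5} requires \emph{every} complete $0$-input solution, from every initial condition in $\X$, to converge to $\A$; local pre-attractivity plus a nonincreasing proper storage function yields boundedness but not this global convergence, so your claim that detectability follows ``verbatim'' from pre-AS is unjustified. The paper avoids this edge entirely: it closes the equivalence as (ii) $\Rightarrow$ (iii) $\Rightarrow$ (iv) $\Rightarrow$ (ii), where (iii) $\Rightarrow$ (iv) uses pre-stability from \cite[Theorem 23]{Goebel.2009} plus the hybrid invariance principle, and (iv) $\Rightarrow$ (ii) goes through Proposition~\ref{P:2} --- a converse result producing a smooth semi-proper $W$ with decrease $-\rho(\omega(\xi)) + \lambda(\abs{u})$, itself resting on Lemma~\ref{L:5} and Sontag's $q$-function trick --- and then sums $W$ with the $\rho \equiv 0$ storage function. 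Your (iii) $\Rightarrow$ (ii) ``Mazenc-type correction'' would have to be fleshed out in exactly this way (detectability is given only as a trajectory property, so there is no output whose dynamics you could add to $V$); as stated it is a gesture, not an argument. In short, (i) $\Rightarrow$ (iv) lacks its core mechanism and (iv) $\Rightarrow$ (iii) cannot be established directly under the paper's definitions; both hard edges would need to be rerouted through the Lyapunov characterization, which is what the paper does.
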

\begin{proof}
We show that $(\ref{item:lyapunov-iiss}) \Rightarrow (\ref{item:iiss})$ in Subsection~\ref{S:1}.
We also give a proof of the implication $(\ref{item:iiss}) \Rightarrow (\ref{item:lyapunov-iiss})$ in Subsection~\ref{S:2}.
The implication $(\ref{item:0-gas}) \Rightarrow (\ref{item:lyapunov-iiss})$ immediately follows from the combination of Proposition~\ref{P:2}
(see below) and Definition~\ref{D:6}.
To see the implication $(\ref{item:lyapunov-iiss}) \Rightarrow (\ref{item:0-detect})$, let the iISS Lyapunov function $V$ be a storage function with $\rho(x) := \alpha_3 (\omega(x))$ and $\alpha_3$ as in (\ref{eq:e4}) and (\ref{eq:e5}).
So $\mathcal{H}$ is smoothly dissipative. Moreover, the distance to $\A$ is 0-input detectable relative to $\{ \xi \in {\X} \colon \rho (x) = 0 \}$ because $\rho(x) = 0$ implies that $x \in \mathcal{A}$. Finally the implication $(\ref{item:0-detect}) \Rightarrow (\ref{item:0-gas})$ is provided as follows: Let $V$ be a storage function.
Also, assume that $u \equiv 0$. According to~\cite[Theorem 23]{Goebel.2009}, $\A$ is 0-input pre-stable.
To show 0-input pre-attractivity of $\A$, consider a complete solution pair $(x,0)$ to $\H$, that is bounded by 0-input pre-stability of $\A$.
We first note that $\H$ satisfying the Standing Assumptions and $u \equiv 0$ imply that the invariance principle for hybrid systems (e.g. Corollary 8.4 in \cite{Goebel.2012}) can be applied.
According to \cite[Corollary 8.4]{Goebel.2012}, there exists some $r \geq 0$ such that every complete solution $(x,0)$ to $\H$ converges to the largest weakly invariant set contained in
\begin{align} \label{eq:det-set}
& \big\{ \xi : V(\xi) = r \big\} \cap \big( \rho_\C^{-1} (0) \cup \rho_\D^{-1} (0)\big) &
\end{align}
where $\rho_\C^{-1} (0) := \{ \xi \in \C : \rho (\xi) = 0 \}$ and $\rho_\D^{-1} (0) := \{ \xi \in \D : \rho (\xi) = 0 \}$.
It follows from the 0-input detectability relative to $\{ \xi \in \X \colon \rho (\xi) = 0 \}$ that every complete solution contained in the set (\ref{eq:det-set}) converges to $\A$.
Moreover, from \eqref{eq:e03}, the only invariant set in \eqref{eq:det-set} is obtained for $r=0$.
As the set \eqref{eq:det-set} lies in $\A$ for $r=0$, then $\A$ is 0-input pre-attractive.
Eventually, we note that smooth dissipativity of $\mathcal{H}$ with respect to $(\omega,\abs{\cdot})$ with $\rho \equiv 0$ is obviously satisfied. This completes the proof.
\end{proof}
\begin{remark}
The assumption $\Pi_0 (\C) \cup \Pi_0 (\D) = \X$ means that the union of the flow set and the jump set generated by the disturbance-free system covers $\X$.
As shown in~\cite[Section IV]{Cai.2008}, there are hybrid systems not satisfying the assumption, hybrid systems with logic variables for instance.
This assumption could be relaxed at the expense of further technicalities following similar lines as in the proof of \cite[Theorem 7.31]{Goebel.2012}.
However, we do not focus on that as it makes the proofs much more complicated without considerable appreciation.
\hfill$\Box$
\end{remark}
\subsection{Illustrative example}
Here we verify iISS of a hybrid system using an iISS Lyapunov function.
Consider a first-order integrator
\begin{align}
\dot x_p = u ,
\end{align}
where $u\in \R$ is the control input to the system.
We aim to control the system using a reset controller under input constraints (i.e. $|u| \leq \overline u$ for some given $\overline u >0$).
As shown in~\cite{Nesic.2008b}, designing a reset controller subjected to disturbances and input constraints leads to a hybrid system of the form~\eqref{eq:e0} as follows
\begin{subequations} \label{eq:res-sys}
\begin{align}
& \left.
\begin{array}{rcl}
\dot x_p & = & \lambda_p \arctan(x_p) + b \arctan(x_c) + w \\
\dot x_c & = & \lambda_c \arctan(x_c) + k \arctan(x_p)
\end{array}
\right\}  (x,w) \in \C , \\
& \left.
\begin{array}{rcl}
x_p^+ & = & x_p \\
x_c^+ & = & 0
\end{array}
\right\}  (x,w) \in \D ,
\end{align}
\end{subequations}
where $x := (x_p,x_c)$ is the sate of the closed-loop system, $w \in \R$ is the disturbance input, $\C = \{ (x,w) \in \R^2 \times \R : x_p ( x_c - x_p) \leq 0 \}$, $\D = \{ (x,w) \in \R^2 \times \R : x_p ( x_c -  x_p) \geq 0 \}$, and the constants $b , k > 0$ and $\lambda_p , \lambda_c < 0$ are chosen later.
From $\D$, the output of controller is reset to zero whenever $x_p ( x_c - x_p) \geq 0$.
Note that for sufficiently large $w$ each solution to the system is unbounded, which shows that the system is \emph{not} ISS.
\begin{corollary}
Consider system~\eqref{eq:res-sys}. Given $b , k > 0$ and $\lambda_p , \lambda_c < 0$, assume that there exist real positive numbers $c_1,c_2>0$ such that
\begin{align}\label{eq:stability-conditions}
c_1 \lambda_p + b c_1 + k c_2 \leq 0 , \;\; c_2 \lambda_c + k c_2 + b c_1 \leq 0 .
\end{align}
Take the proper indicator $\omega (\cdot) = \abs{\cdot}$. Then system~\eqref{eq:res-sys} is pre-iISS with respect to the origin.
\end{corollary}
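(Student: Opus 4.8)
The plan is to exhibit an explicit smooth iISS-Lyapunov function with respect to $(\omega,\abs{\cdot})$ and then invoke the implication $(\ref{item:lyapunov-iiss})\Rightarrow(\ref{item:iiss})$ of Theorem~\ref{T:1}. First I would verify the hypotheses of that theorem: the Standing Assumptions hold because $\C$ and $\D$ are closed, $f,g$ are continuous, and for fixed $x$ the disturbance $w$ enters $f$ affinely, so $\{f(x,w):\abs{w}\le\epsilon\}$ is a segment and hence convex; moreover $\Pi_0(\C)\cup\Pi_0(\D)=\{x:x_p(x_c-x_p)\le 0\}\cup\{x:x_p(x_c-x_p)\ge 0\}=\R^2=\X$. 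The candidate dictated by the $\arctan$ nonlinearities and the weights $c_1,c_2$ is
\begin{equation*}
V(x)=c_1 W(x_p)+c_2 W(x_c),\qquad W(s):=\int_0^s \arctan(\tau)\,\mathrm{d}\tau .
\end{equation*}
Here $W$ is smooth, even, strictly increasing in $\abs{s}$, nonnegative and unbounded, so $V$ is smooth, positive definite and radially unbounded; the sandwich bound \eqref{eq:e3} then holds for suitable $\alpha_1,\alpha_2\in\Kinf$ by a standard argument, and $\nabla V(x)=(c_1\arctan x_p,\,c_2\arctan x_c)$.

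For the flow inequality \eqref{eq:e4}, writing $a:=\arctan x_p$ and $c:=\arctan x_c$ gives
\begin{equation*}
\langle\nabla V(x),f(x,w)\rangle=c_1\lambda_p a^2+c_2\lambda_c c^2+(c_1 b+c_2 k)\,ac+c_1 a\,w .
\end{equation*}
I would bound the cross term via Young's inequality, $ac\le\tfrac12(a^2+c^2)$, and then use \eqref{eq:stability-conditions}: since $c_1 b+c_2 k>0$, the condition $c_1\lambda_p+b c_1+k c_2\le 0$ yields $c_1\lambda_p+\tfrac12(c_1 b+c_2 k)\le-\tfrac12(c_1 b+c_2 k)<0$, and symmetrically for the $c^2$-coefficient. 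Hence the drift is dominated by $-\mu(a^2+c^2)$ with $\mu:=\tfrac12(c_1 b+c_2 k)>0$, while $c_1 a\,w\le c_1\tfrac{\pi}{2}\abs{w}=:\sigma(\abs{w})\in\K$ because $\abs{\arctan x_p}\le\pi/2$. As $\mu(\arctan^2 x_p+\arctan^2 x_c)$ is positive definite in $x$, it bounds from below a positive definite function of $\abs{x}$, call it $\alpha_3^{\mathrm c}$, giving \eqref{eq:e4}.

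The jump inequality \eqref{eq:e5} is where the real work lies. Since $g(x,w)=(x_p,0)$ and $W(0)=0$,
\begin{equation*}
V(g(x,w))-V(x)=-c_2 W(x_c)\le 0 .
\end{equation*}
A priori this decrease involves only $x_c$, so it is not evident that it controls the full state. The key observation is geometric: on $\D=\{x_p(x_c-x_p)\ge 0\}$ one necessarily has $\abs{x_c}\ge\abs{x_p}$ (if $x_p>0$ then $x_c\ge x_p$, and if $x_p<0$ then $x_c\le x_p$), whence $\abs{x_c}\ge\abs{x}/\sqrt 2$. Because $W$ is increasing in $\abs{\cdot}$, this gives $-c_2 W(x_c)\le-c_2 W(\abs{x}/\sqrt 2)=:-\alpha_3^{\mathrm d}(\abs{x})$ with $\alpha_3^{\mathrm d}\in\Kinf\subset\mathcal{PD}$, so the reset enforces a decrease dominating a positive definite function of the whole state.

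To finish, I would take the common $\alpha_3:=\min\{\alpha_3^{\mathrm c},\alpha_3^{\mathrm d}\}\in\mathcal{PD}$, which makes \eqref{eq:e4} and \eqref{eq:e5} hold simultaneously, so $V$ is a smooth iISS-Lyapunov function and pre-iISS with respect to the origin follows from Theorem~\ref{T:1}. The main obstacle is precisely this jump step: the controller reset zeroes only $x_c$, so one must exploit the structure of $\D$ to convert an $x_c$-only decrease into a bound on $\abs{x}$, and then reconcile the distinct positive definite functions arising from flows and jumps into the single $\alpha_3$ required by Definition~\ref{D:2}.
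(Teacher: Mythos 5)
Your proposal is correct, and it follows the paper's overall strategy: exhibit an explicit smooth iISS-Lyapunov function built from $\arctan$ with weights $c_1,c_2$, verify \eqref{eq:e3}--\eqref{eq:e5}, and invoke the implication (\ref{item:lyapunov-iiss})$\Rightarrow$(\ref{item:iiss}) of Theorem~\ref{T:1}. The difference is the Lyapunov function itself: the paper takes $V(x)=c_1x_p\arctan(x_p)+c_2x_c\arctan(x_c)$, whereas you take $V(x)=c_1W(x_p)+c_2W(x_c)$ with $W(s)=\int_0^s\arctan(\tau)\,\mathrm{d}\tau$. Your choice buys a real simplification and, in one respect, a stronger conclusion. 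Since $\nabla V(x)=(c_1\arctan x_p,\,c_2\arctan x_c)$, the terms $x_p/(1+x_p^2)$, $x_c/(1+x_c^2)$ that force the paper through several auxiliary bounds ($\abs{s}/(1+s^2)\le 1$, $s^2/(1+s^2)\le[\arctan(s)]^2$) never appear; one application of Young's inequality gives the flow decrease $-\mu(\arctan^2x_p+\arctan^2x_c)$ with $\mu=\tfrac12(bc_1+kc_2)>0$. This yields a genuine $\mathcal{PD}$ decrease \emph{even when \eqref{eq:stability-conditions} hold with equality}, whereas the paper's final flow estimate \eqref{eq:V-flows} retains only the coefficients $(c_1\lambda_p+bc_1+kc_2)$ and $(c_2\lambda_c+kc_2+bc_1)$ — having discarded the negative terms $c_1\lambda_p x_p\arctan(x_p)/(1+x_p^2)$, $c_2\lambda_c x_c\arctan(x_c)/(1+x_c^2)$ — so in the equality case the paper's bound degenerates to $\langle\nabla V,f\rangle\le\sigma(\abs{w})$ and, as written, does not deliver the $\alpha_3\in\mathcal{PD}$ required by Definition~\ref{D:2} (that gap is repairable by keeping the discarded terms, but your route avoids it altogether). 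On jumps, the two arguments are the same idea in different bookkeeping: both rest on the geometric fact that $\D$ forces $\abs{x_c}\ge\abs{x_p}$; the paper expresses it as $x_p\arctan(x_p)\le x_c\arctan(x_c)$ and splits $-c_2x_c\arctan(x_c)$ with a parameter $\rho\in(0,c_2)$ in \eqref{eq:V-jumps}, while you pass directly to $\abs{x_c}\ge\abs{x}/\sqrt{2}$ and the bound $-c_2W(\abs{x}/\sqrt{2})$. Your closing step — merging the flow and jump rates into $\alpha_3=\min\{\alpha_3^{\mathrm c},\alpha_3^{\mathrm d}\}$ and checking the hypotheses of Theorem~\ref{T:1} (Standing Assumptions, $\Pi_0(\C)\cup\Pi_0(\D)=\X$) — is carried out more explicitly than in the paper, which leaves these verifications implicit.
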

\begin{proof}
Take the following iISS Lyapunov function candidate
$$
V(x) = c_1 x_p \arctan(x_p) + c_2 x_c \arctan(x_c) .
$$
Obviously, $V$ satisfies~\eqref{eq:e03} for some appropriate $\alpha_1,\alpha_2 \in \Kinf$ and $\omega (\cdot) = \abs{\cdot}$.
Picking $(x,w) \in \C$, we have
\begin{align*}
\langle \nabla V , f(x,w) \rangle = & c_1 \Big[ \arctan (x_p) \big( \lambda_p \arctan (x_p) + b \arctan (x_c) + w \big) \\
& \;\;\;\;\; + \frac{x_p}{1+x_p^2} \big( \lambda_p \arctan (x_p) + b \arctan (x_c) + w \big)
\Big] \\
& + c_2 \Big[ \arctan (x_c) \big( \lambda_c \arctan (x_c) + k \arctan (x_p) \big) \nonumber\\
&\qquad\;\;+ \frac{x_c}{1+x_c^2} \big( \lambda_c \arctan (x_c) + k \arctan (x_p) \big) \Big] .
\end{align*}
Using Young's inequality and the facts that $\abs{\arctan(s)} \leq \pi/2$ and $\abs{s}/(1+s^2) \leq 1$ for all $s \in \R$ give
\begin{align*}
\langle \nabla V , f(x,w) \rangle \leq  & \big( c_1 \lambda_p + 0.5 b c_1 + k c_2 \big) [\arctan (x_p)]^2 + c_1 \lambda_p \frac{x_p \arctan(x_p)}{1+x_p^2}\\
& 
+ \frac{c_1 b}{2} \frac{x_p^2}{1+x_p^2} + \big(c_2 \lambda_c + 0.5 k c_2 + b c_1 \big) [\arctan (x_c)]^2 \\
& + c_2 \lambda_c \frac{x_c \arctan(x_c)}{1+x_c^2}
+ \frac{c_2 k}{2} \frac{x_c^2}{1+x_c^2} + \frac{c_1 (\pi+1)}{2} \abs{w} .
\end{align*}
From the fact that $\frac{s^2}{1+s^2} \leq [\arctan(s)]^2$ for all $s \in \R$, we have
\begin{align}
\langle \nabla V , f(x,w) \rangle \leq & \big( c_1 \lambda_p + b c_1 + k c_2 \big) [\arctan (x_p)]^2 
+ c_1 \lambda_p \frac{x_p \arctan(x_p)}{1+x_p^2} \nonumber\\
& \!+\! \big(c_2 \lambda_c + k c_2 + b c_1 \big) [\arctan (x_c)]^2
\! + \! c_2 \lambda_c \frac{x_c \arctan(x_c)}{1+x_c^2} \! + \! \frac{c_1 (\pi+1)}{2} \abs{w} \nonumber\\
\leq & \big( c_1 \lambda_p + b c_1 + k c_2 \big) [\arctan (x_p)]^2 
+ \big(c_2 \lambda_c + k c_2 + b c_1 \big) [\arctan (x_c)]^2 \nonumber\\
& +\frac{c_1 (\pi+1)}{2} \abs{w} .
 \label{eq:V-flows}
\end{align}
Now we consider jump equations on the set $\D$. For any $(x,w) \in \D$ we get
\begin{align*}
V(g(x)) - V(x) & = - c_2 x_c \arctan(x_c) \\
& = - \rho x_p \arctan(x_p) - c_2 x_c \arctan(x_c) + \rho x_p \arctan(x_p) , 
\end{align*}
where $0< \rho < c_2$. Note that $(x,w) \in \D$ implies that $x_p \arctan(x_p) \leq x_c \arctan(x_c)$. So we have
\begin{align}
V(g(x)) - V(x) & \leq - \rho x_p \arctan(x_p) - c_2 x_c \arctan(x_c) + \rho x_c \arctan(x_c)  \nonumber\\
& = - \rho x_p \arctan(x_p) - (c_2-\rho) x_c \arctan(x_c) . \label{eq:V-jumps}
\end{align}
It follows from~\eqref{eq:stability-conditions}, \eqref{eq:V-flows} and \eqref{eq:V-jumps} that $V$ is an iISS Lyapunov function for system~\eqref{eq:res-sys}.
\end{proof}
Finding an iISS Lyapunov function is not always easy.
Alternatively, either item~(\ref{item:0-detect}) or~(\ref{item:0-gas}) can be used to conclude the iISS property; see Section~\ref{sec:examples}.
\subsection{Proof of the implication $(\ref{item:lyapunov-iiss}) \Rightarrow (\ref{item:iiss})$} \label{S:1}

Consider a solution pair $(x,u)$ to $\mathcal{H}$. Given~\eqref{eq:e4} and~\eqref{eq:e5}, we have
\begin{equation*}
\langle \nabla V (x(t,j)),f(x(t,j),u(t,j)) \rangle \leq - \alpha_3 (\omega(x(t,j))) + \sigma(\abs{u(t,j)})
\end{equation*}
for almost all $t$ such that $(t,j) \in \dom\,x \backslash \Gamma(x)$; and
\begin{equation*}
V\left(g(x(t,j),u(t,j))\right) - V(x(t,j)) \leq - \alpha_3 (\omega(x(t,j))) + \sigma(\abs{u(t,j)}) 
\end{equation*}
for all $(t,j) \in \Gamma(x)$. Applying Lemma~\ref{L:1} to $\alpha_3$, there exist $\rho_1 \in \Kinf$ and $\rho_2 \in \mathcal{L}$ such that
\begin{equation*}
\langle \nabla V (x(t,j)),f(x(t,j),u(t,j)) \rangle \leq - {\rho _1}\left( {\omega \left( x(t,j)  \right)} \right)\,{\rho _2}\left( {\omega \left( x(t,j)  \right)} \right) + \sigma(\abs{u(t,j)}) 
\end{equation*}
for almost all $t$ such that $(t,j) \in \dom\,x \backslash \Gamma(x)$; and
\begin{equation*}
V(g(x(t,j),u(t,j))) - V(x(t,j)) \leq - {\rho _1}\left( {\omega \left( x(t,j)  \right)} \right)\,{\rho _2}\left( {\omega \left( x(t,j)  \right)} \right) + \sigma(\abs{u(t,j)}) 
\end{equation*}
for all $(t,j) \in \Gamma(x)$. Exploiting~\eqref{eq:e3} and letting $\tilde{\rho} ( \cdot ) := \rho_1 \circ \alpha_2^{- 1} ( \cdot ) \rho_2 \circ\alpha_1^{- 1} ( \cdot )$ yield
\begin{equation}
\langle \nabla V (x(t,j)),f(x(t,j),u(t,j)) \rangle \leq - \tilde{\rho} ( V ( \xi ) ) + \sigma(\abs{u}) \label{eq:e14}
\end{equation}
for almost all $t$ such that $(t,j) \in \dom\,x \backslash \Gamma(x)$; and
\begin{equation}
V(g(x(t,j),u(t,j))) - V(x(t,j)) \leq - \tilde{\rho} ( V ( \xi ) ) + \sigma(\abs{u}) \label{eq:e15}
\end{equation}
for all $(t,j) \in \Gamma(x)$. Define the hybrid arcs $z$ and $v$ by
\begin{align}
& z(t,j)  := V(x(t,j)) - v(t,j) , \label{eq:e71} \\
& v(t,j) := \int_{0}^{t} \sigma ( \abs{u(s,i(s))} ) \, \mathrm{d} s + \!\!\!\!\!\!\!\!\!\!\!\! \sum_{\scriptsize{\begin{array}{c}(t^\prime,j^\prime) \in \Gamma(u),\\(0,0) \preceq (t^{\prime},j^{\prime}) \prec (t,j)\end{array}}} \!\!\!\!\!\!\!\!\!\!\!\! \sigma (\abs{u(t^\prime,j^\prime)}) . \label{eq:e72}
\end{align}
It should be pointed out that the hybrid arcs $z$ and $v$ are defined on the same hybrid time domain $\mathrm{dom}\,x$ because, by the assumption, $\mathrm{dom}\,x = \mathrm{dom}\,u$. It follows from (\ref{eq:e14}), ~(\ref{eq:e71}) and~(\ref{eq:e72}) that the following holds for almost all $t$ such that $(t,j) \in \mathrm{dom} \, z \backslash \Gamma(z)$
\begin{align} \label{eq:e73}
\dot{z} (t,j) & \leq - \tilde{\rho} (V(x(t,j))) = - \tilde{\rho} ( \max \{ z(t,j) + v(t,j) , 0 \} ) .
\end{align}
From (\ref{eq:e15}),~(\ref{eq:e71}) and~(\ref{eq:e72}), we have for all $(t,j) \in \Gamma(z)$
\begin{align}
& z (t,j + 1) - z (t,j) \leq - \tilde{\rho} ( \max \{ z(t,j) + v(t,j) , 0 \} ) . & \label{eq:e74}
\end{align}
It follows from (\ref{eq:e73}), (\ref{eq:e74}) and Lemma~\ref{L:2}, there exists $\beta \in \mathcal{KLL}$ such that
\begin{align} \label{eq:e75}
z(t,j) \leq & \max \{ \beta (z(0,0),t,j), \norm{v_{(t,j)}}_\infty \} \leq \beta (z(0,0),t,j) + \norm{v_{(t,j)}}_\infty  &
\end{align}
for all $(t,j) \in \dom \, z$. An immediate consequence from~(\ref{eq:e71}),~(\ref{eq:e72}) and the facts that $z(0,0) = V(x(0,0))$ and $\norm{v_{(t,j)}}_\infty = v(t,j)$ is
\begin{align}
V ( x (t,j) ) \leq & \beta \left( {V\left( {{x(0,0)}} \right),t,j} \right) + 2 \int_{0}^{t} {\sigma \left( {\abs{u(s,i(s))}} \right)ds} + 2 \!\!\!\!\!\!\!\!\!\!\!\! \sum_{\scriptsize{\begin{array}{c}(t^\prime,j^\prime) \in \Gamma(u),\\(0,0) \preceq (t^{\prime},j^{\prime}) \prec (t,j)\end{array}}} \!\!\!\!\!\!\!\!\!\!\!\! \sigma (\abs{u(t^\prime,j^\prime)}) 
 & \nonumber
\end{align}
for all $(t,j) \in \mathrm{dom}\,x$. Exploiting (\ref{eq:e3}) and denoting $\tilde\beta (\cdot,\cdot,\cdot) $ $ :=\beta (\alpha_2 (\cdot),\cdot,\cdot)$, $\gamma_1 (\cdot) := 2\sigma (\cdot)$ and $\gamma_2 (\cdot) := 2\sigma (\cdot), \alpha (\cdot) := \alpha_1(\cdot)$ gives the conclusion
\begin{align*} 
\alpha ( \omega (x(t,j)) ) \leq & \tilde{\beta} \left( \omega (x(0,0)) ,t,j \right) + \int_0^t {\gamma_1 \left( {\abs{u(s,i(s))}} \right)\mathrm{d}\,s } + 
\!\!\!\!\!\!\!\!\!\!\!\! \sum_{\scriptsize{\begin{array}{c}(t^\prime,j^\prime) \in \Gamma(u),\\(0,0) \preceq (t^{\prime},j^{\prime}) \prec (t,j)\end{array}}} \!\!\!\!\!\!\!\!\!\!\!\!\gamma_2 (\abs{u(t^\prime,j^\prime)}) . &
\end{align*}
\subsection{Proof of the implication $(\ref{item:iiss}) \Rightarrow (\ref{item:lyapunov-iiss})$} \label{S:2}

The proof is split into the following steps: 1) we recall Theorem~\ref{P:02} that an inflated system, say $\H_\sigma$, remains pre-iISS under small enough perturbations when $\H$ is pre-iISS; 2) we define an auxiliary system, say $\hat\H$, and then we show that some selection result holds for $\hat\H$ and $\H$; 3) we start constructing a smooth converse iISS Lyapunov function for $\H$ with providing a preliminary possibly non-smooth function, denoted by $V_0$, and we show that $V_0$ cannot increase too fast along solutions of $\hat\H$ (cf. Lemma~\ref{L:15} below); 4) we initially smooth $V_0$ and obtain the partially smooth function $V_s$ (cf. Lemma~\ref{L:12} below); 5) we smooth $V_s$ on the whole state space and get the smooth function $V_1$ (cf. Lemma~\ref{L:13} below); 6) we pass from the results for $\hat\H$ to the similar ones for $\H$ (cf. Lemma~\ref{L:14} below); 7) we give a characterization of 0-input pre-AS (cf. Proposition~\ref{P:2} below); 8) finally we combine the results of Lemma~\ref{L:14} with those of Proposition~\ref{P:2} to obtain the smooth converse iISS Lyapunov function $V$.
\begin{remark}
It should be noted that the construction of a smooth converse iISS Lyapunov function follows the same steps as those in \cite{Angeli.2000} but with different tools and technicalities.
Particularly, the authors in \cite{Angeli.2000} provided a preliminary possibly non-smooth iISS Lyapunov function and then appealed to~\cite[Theorem B.1]{Lin.1996} and~\cite[Proposition 4.2]{Lin.1996} to smooth the preliminary iISS Lyapunov function regardless robustness of iISS to sufficiently small perturbations. However, such a procedure does not necessarily hold for the case of hybrid systems as the procedure relies on uniform convergence of solutions. This is the reason that we appeal to results in \cite[Sections VI.B-C]{Cai.2007}, that is originally developed~\cite{Teel.2000}, to smooth our preliminary iISS Lyapunov function. Toward this end, we need to establish robustness of the pre-iISS property for hybrid systems to vanishing perturbations, which is challenging and has not been previously studied in the literature.
\hfill$\Box$
\end{remark}
\subsubsection{Robustness of pre-iISS}
Here we show robustness of pre-iISS to small enough perturbations (cf. Theorem~\ref{P:02} below). To be more precise, there exists an inflated hybrid system, denoted by $\H_\sigma$, remaining pre-iISS under sufficiently small perturbations when the original system $\H$ is pre-iISS. 

Given the hybrid system $\H$, a compact set $\A \subset \X$, and a continuous function $\sigma \colon \X \to \Rp$ that is positive on $\X \backslash \A$, the $\sigma$-perturbation of $\H$, denoted by $\H_\sigma$, is defined by
\begin{align}
& \H_\sigma := \left\{ \begin{array}{lccl}
\dot{\overline{x}} & \in & f_\sigma (\overline{x},u) & \quad (\overline{x},u) \in \C_\sigma \\
  \overline{x}^+ & \in & g_\sigma (\overline{x},u) & \quad (\overline{x},u) \in \D_\sigma
\end{array} \right. & \label{eq:er6}
\end{align}
where
\begin{eqnarray}
f_\sigma (\overline{x},u) & := & \overline{\mathrm{co}} f \left( (\overline{x} + \sigma (\overline{x}) \overline{\mathbb{B}},u) \cap \C \right) + \sigma(\overline{x}) \overline{\mathbb{B}} , \\
g_\sigma (\overline{x},u) & := &   \big\{ z \in \X : z \in v + \sigma (v) \overline{\mathbb{B}} , v \in g \left( (\overline{x} + \sigma (\overline{x}) \overline{\mathbb{B}},u) \cap \D \right) \big\} , \\
\C_\sigma & := & \left\{ (\overline{x},u) \colon (\overline{x}+\sigma (\overline{x}) \overline{\mathbb{B}},u) \cap \mathcal{C} \neq \emptyset \right\} , \\
\D_\sigma & := & \left\{ (\overline{x},u) \colon (\overline{x}+\sigma (\overline{x}) \overline{\mathbb{B}},u) \cap \mathcal{D} \neq \emptyset \right\} .
\end{eqnarray}
In what follows, by an \emph{admissible perturbation radius}, we mean any continuous function $\sigma \colon \X \to \Rp$ such that $x+\sigma(x) \overline{\mathbb{B}} \subset \X$ for all $x \in \X$.
\begin{theorem} \label{P:02}
Let $\H$ satisfy the Standing Assumptions.
Let $\A \subset \X$ be a compact set.
Assume that the hybrid system $\H$ is pre-iISS with respect to $\A$.
There exists an admissible perturbation radius $\sigma \colon \X \to \Rp$ that is positive on $\X \backslash \A$ such that the hybrid system $\H_\sigma$, the $\sigma$-perturbation of $\H$, is pre-iISS with respect to $\A$, as well.
\end{theorem}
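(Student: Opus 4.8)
The plan is to realize every solution of the inflated system $\H_\sigma$ as a solution of the nominal system $\H$, driven by the same input but subject to a small additive state disturbance, and then to show that the pre-iISS estimate of Definition~\ref{D:1} degrades only gracefully under such a disturbance once $\sigma$ is taken small enough and vanishing on $\A$.

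First I would fix a maximal solution pair $(\overline{x},u)$ of $\H_\sigma$ and unwind the inclusions defining $f_\sigma$, $g_\sigma$, $\C_\sigma$ and $\D_\sigma$. Appealing to the Standing Assumptions (A1--A3) and a measurable selection argument, along flows one writes $\dot{\overline{x}}(t,j)=\phi(t,j)+e(t,j)$, where $\phi(t,j)$ lies in $\overline{\mathrm{co}}\,f\big((\overline{x}(t,j)+\sigma(\overline{x}(t,j))\overline{\mathbb{B}},u(t,j))\cap\C\big)$ and $\abs{e(t,j)}\le\sigma(\overline{x}(t,j))$; a parallel decomposition holds at jumps for $g_\sigma$. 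Since $f$ and $g$ are continuous, hence uniformly continuous on compact subsets of $\C$ and $\D$, and since A3 lets us dispose of the convex hull, I would bound $\phi(t,j)$ within a modulus of continuity of $f(\overline{x}(t,j),u(t,j))$. Collecting terms shows that $\overline{x}$ is a solution of $\H$ under the input $u$ perturbed by an additive disturbance $d$ with $\abs{d(t,j)}\le\kappa(\sigma(\overline{x}(t,j)))$ along both flows and jumps, for some $\kappa\in\K$ with $\kappa(s)\to0$ as $s\to0$ encoding the modulus of continuity. This reduces the theorem to a robustness statement for $\H$ itself: the pre-iISS estimate survives, with adjusted comparison functions, the addition of a small state disturbance whose size is controlled by $\sigma(\overline{x})$.

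Next I would split the resulting bound into its decay and input contributions. The zero-input content of pre-iISS is that $\A$ is $0$-input pre-AS for $\H$; by the robustness of pre-asymptotic stability for well-posed hybrid systems (the mechanism underlying \cite[Sections VI.B--C]{Cai.2007} and \cite{Teel.2000}) there is an admissible radius under which this $\KLL$ decay persists for the inflated dynamics. For the input/disturbance content, I would run a comparison argument in hybrid time, treating the accumulated effect of $d$ as an extra energy term of the form $\int_0^t\kappa(\sigma(\overline{x}(s,i(s))))\,\mathrm{d}s+\sum\kappa(\sigma(\overline{x}(t',j')))$; because $\abs{d}$ is bounded by $\kappa\circ\sigma$ evaluated along the trajectory, this term is itself a functional of $\omega(\overline{x})$ over past hybrid time, producing a self-referential inequality.

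Finally I would close that inequality by the choice of $\sigma$. Taking $\sigma$ vanishing on $\A$, positive on $\X\setminus\A$, and small enough that $\kappa\circ\sigma$, after composition with the relevant $\K$-gain, is dominated along every solution by a fixed fraction of the nominal decay, a Gr\"onwall-type comparison (in the flow/jump structure captured by Lemma~\ref{L:2}) absorbs the disturbance term into $\tilde\beta$ and the genuine input term $\norm{u_{(t,j)}}_{\gamma_1,\gamma_2}$. After relabeling so that all comparison functions remain of class $\KLL$ and $\K$, the outcome is precisely pre-iISS of $\H_\sigma$ with respect to $\A$, with $\sigma$ admissible and positive on $\X\setminus\A$ as required. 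The step I expect to be the main obstacle is exactly this self-referential closure: the perturbation radius $\sigma(\overline{x})$ depends on the very trajectory whose bound we are trying to establish, and the iISS term is an accumulated (integral-plus-sum) quantity rather than a supremum, so no pointwise small-gain condition suffices. Establishing graceful degradation of the iISS bound under additive perturbations without a Lyapunov function in hand --- since its existence is what the surrounding argument is trying to prove --- is the delicate point, and it forces $\sigma$ to be chosen as a carefully tuned function of $\omega$ and requires the reduction step to hold uniformly despite the set-valued, convex-hull structure of $f_\sigma$ and $g_\sigma$ on the possibly unbounded state space $\X$.
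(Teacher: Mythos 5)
Your reduction step is fine as far as it goes: unwinding $f_\sigma$, $g_\sigma$, $\C_\sigma$, $\D_\sigma$ does exhibit every solution of $\H_\sigma$ as an arc driven by values of $f$ (or $g$) at nearby points plus an error of size $\sigma(\overline{x})$. The genuine gap is the very next claim, that this ``reduces the theorem to a robustness statement for $\H$ itself.'' The hypothesis of Theorem~\ref{P:02} is a trajectory-based estimate that holds only along \emph{exact} solution pairs of $\H$; the system $\H$ has no additive state-disturbance channel, so a perturbed arc is simply not an object to which the pre-iISS estimate \eqref{eq:e1} can be applied, and there is no Lyapunov function available to propagate a decrease along it --- its existence is precisely what Theorem~\ref{P:02} is ultimately needed to establish. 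You acknowledge this circularity at the end, but acknowledging it is not resolving it. The same problem makes your closing step fail concretely: Lemma~\ref{L:2} acts on a hybrid arc $z$ satisfying the decrease conditions \eqref{eq:e6}--\eqref{eq:e7}, and such an arc is obtained by evaluating a Lyapunov function along solutions (as in Subsection~\ref{S:1}); the bare $\KLL$-plus-energy estimate yields no differential or difference inequality at all, so there is nothing for your Gr\"onwall/small-gain closure to act on. Nor can an accumulated integral-plus-sum disturbance term be absorbed into a decaying $\tilde\beta$ by ``a fixed fraction of the nominal decay'': the disturbance energy grows over the whole (possibly unbounded) hybrid time domain while the decay term only shrinks, which is exactly the self-referential trap you identify.

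The paper's proof avoids ever applying the iISS estimate to perturbed arcs, and instead runs a trajectory-compactness argument. It introduces semiglobal practical robust pre-iISS (Definitions~\ref{D:02} and~\ref{D:03}) and proceeds in three steps: (i) by well-posedness, solutions of $\H_{\delta\sigma}$ with data bounded by $r$ are $(T,\tilde\varepsilon)$-graphically close to exact solutions of $\H$ on compact hybrid-time horizons, and compactness of the reachable set together with uniform continuity of $\omega$, $\alpha$, $\beta$ transfers the nominal estimate to $\H_{\delta\sigma}$ up to an additive $\varepsilon$ (Proposition~\ref{P:01}); (ii) a semigroup/patching argument over intervals of length $T$ upgrades these finite-horizon practical estimates to the semi-infinite horizon (Proposition~\ref{P:00}); (iii) a diagonal construction over annuli $r_{m-1}\leq\omega(\overline{\xi})\leq r_m$, with $\delta_m$ shrinking on each annulus, assembles an admissible state-dependent radius $\sigma_2$ and then builds a genuine $\KL$ bound as a supremum over solutions (Proposition~\ref{T:00}). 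If you want to salvage your outline, step (i) is the correct replacement for your additive-disturbance reduction: closeness of perturbed and nominal solutions comes from sequential compactness of the solution set of a well-posed hybrid system, not from any quantitative degradation of the iISS inequality itself.
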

\begin{proof} See Appendix~\ref{Ap:H}. \end{proof}
\begin{remark}
Besides the contribution of Theorem~\ref{P:02} to proof of our main result, it is of independent interest. We note that model \eqref{eq:er6} arises in many practical cases. For instance, assume that $\H$ is pre-iISS. Different types of perturbations such as slowly varying parameters, singular perturbations, highly oscillatory signals to $\H$ provide a perturbed system which may be modeled by \eqref{eq:er6} (cf. \cite{Wang.2012,Angeli.2002,Moreau.2001} for more details). Theorem \ref{P:02} guarantees pre-iISS of the perturbed system under the certain conditions.
\hfill$\Box$
\end{remark}
\subsubsection{The auxiliary system $\hat{\H}$ and the associated properties} 
We need to define the following auxiliary system $\hat{\H}$. Assume that $\H$ is pre-iISS with respect to $\A$ satisfying~\eqref{eq:e1} with suitable functions~$\alpha$, $\tilde\beta$, $\gamma_1$, $\gamma_2$. Pick any $\varphi \in \Kinf$ with $\max \{ \gamma_1 \circ \varphi(s), \gamma_2 \circ \varphi(s) \} \leq \alpha(s)$ for all $s \in \Rp$. Define the following hybrid inclusion
\begin{align}
& \hat{\H} := \left\{ \begin{array}{lccl}
\dot{x} & \in & \hat{F} (x) & \qquad x \in \hat{\C} \\
 x^+ & \in & \hat{G} (x) & \qquad x \in \hat{\D}
\end{array} \right. & \label{eq:er7}
\end{align}
where
\begin{eqnarray} \label{eq:er9}
\begin{array}{rcl}
\hat{F} (x) & := & \left\{ \nu \in \Rn \colon \nu \in f \left( x , u \right) , u \in \mathcal{U} \cap \varphi(\omega(x)) \overline{\mathbb{B}} \textrm{ and } (x,u) \in \mathcal{C} \right\} , \\
\hat{G} (x) & := & \left\{ \nu \in \X \colon \nu \in g \left( x , u \right) , u \in \mathcal{U} \cap \varphi(\omega(x)) \overline{\mathbb{B}} \textrm{ and } (x,u) \in \mathcal{D} \right\} , \\
\hat\C & := & \left\{ x \in \X \colon \exists \, u \in \mathcal{U} \cap \varphi(\omega(x)) \, \overline{\mathbb{B}} \textrm{ such that } (x,u) \in \mathcal{C} \right\} , \\
\hat\D & := & \left\{ x \in \X \colon \exists \, u \in \mathcal{U} \cap \varphi(\omega(x)) \, \overline{\mathbb{B}} \textrm{ such that } (x,u) \in \mathcal{D} \right\} . 
\end{array}
\end{eqnarray}
The hybrid inclusion (\ref{eq:er7}) is denoted by $\hat{\mathcal{H}} := (\hat{F}, \hat{G}, \hat\C , \hat\D,\mathcal{O})$ where $\mathcal{O} = \hat\C \cup \hat\D$.
We note that $\mathcal{O} = \X$ because $\X \supset \mathcal{O} = \hat\C \cup \hat\D \supset \Pi_0 (\C) \cup \Pi_0 (\D) = \X$.
We also note that $\hat{F}(x) = \overline{\mathrm{co}} \, \hat{F}(x)$ for each $x \in \hat\C$ and the data of $\hat\H$ satisfies the Hybrid Basic Conditions (cf. Assumption 6.5 in \cite{Goebel.2012}).
To distinguish maximal solutions to $\hat\H$ from those to $\H$, we denote a maximal solution to $\hat\H$ starting from $\xi$ by $x_\varphi (\cdot,\cdot,\xi)$. Let $\hat\varrho (\xi)$ denote the set of all maximal solutions of $\hat\H$ starting from $\xi \in \X$.

We first relate solutions to $\H$ to those to $\hat\H$ using the following claim whose proof follows from similar lines as in the proof of \cite[Claim 3.7]{Cai.2009} with minor modifications. 
\begin{claim} \label{Co:1}
Assume that $\H$ is pre-forward complete. For each solution $x$ to $\hat\H$, there exists a hybrid input $u$ such that $(x,u)$ is a solution pair to $\H$ with $\abs{u(t,j)} \leq \varphi(\omega(x(t,j)))$ for all $(t,j) \in \dom x$.
$\textrm{ }$\hfill$\Box$
\end{claim}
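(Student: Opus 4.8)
The plan is to reconstruct the input $u$ that generates the given arc $x$ by a measurable selection argument, handling the flow and jump parts separately. Fix a solution $x$ to $\hat\H$ with domain $\dom x$. For each index $j$ whose flow interval $I^j$ has nonempty interior, the definition of $\hat F$ says that for almost every $t \in I^j$ the derivative satisfies $\dot x(t,j) \in \hat F(x(t,j))$, i.e.\ there is some $u \in \U \cap \varphi(\omega(x(t,j)))\overline{\mathbb{B}}$ with $(x(t,j),u) \in \C$ and $f(x(t,j),u) = \dot x(t,j)$. The task on $I^j$ is therefore to produce such a $u$ depending \emph{measurably} on $t$.

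First I would set up the constraint multifunction. Writing $\xi(t) := x(t,j)$ (absolutely continuous, hence continuous) and $\eta(t) := \dot x(t,j)$ (measurable, defined for a.e.\ $t$), define $K(t) := \{ u \in \U : \abs{u} \leq \varphi(\omega(\xi(t))),\ (\xi(t),u) \in \C \}$. Since $\xi$ is continuous, $\U$ is closed, $\C$ is relatively closed in $\X \times \U$, and $\varphi,\omega$ are continuous, the graph of $t \mapsto K(t)$ is relatively closed in $I^j \times \U$, so $K$ is a measurable closed-valued multifunction. Because $f$ is continuous (hence Carath\'eodory) and $\eta(t) \in f(\xi(t), K(t))$ for a.e.\ $t$, Filippov's implicit-function lemma yields a measurable selection $t \mapsto u(t,j) \in K(t)$ satisfying $f(\xi(t), u(t,j)) = \eta(t)$ for a.e.\ $t \in I^j$; by construction $(x(t,j),u(t,j)) \in \C$, $\dot x(t,j) = f(x(t,j),u(t,j))$ for a.e.\ $t$, and $\abs{u(t,j)} \leq \varphi(\omega(x(t,j)))$.

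For the jump part I would treat $\Gamma(x)$, which is a countable set of hybrid times. For each $(t,j) \in \Gamma(x)$ the relation $x(t,j+1) \in \hat G(x(t,j))$ guarantees the existence of at least one $u \in \U \cap \varphi(\omega(x(t,j)))\overline{\mathbb{B}}$ with $(x(t,j),u) \in \D$ and $g(x(t,j),u) = x(t,j+1)$; pick one such value and set $u(t,j)$ equal to it. Since $\Gamma(x)$ is countable, no measurability question arises here. Assembling the flow and jump selections gives a hybrid signal $u$ with $\dom u = \dom x$; it is Lebesgue measurable on each $I^j$ by the selection and locally essentially bounded because $\abs{u(\cdot,j)} \leq \varphi(\omega(x(\cdot,j)))$ with the right-hand side continuous, so $u$ is a genuine hybrid input. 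Verifying that $(x,u)$ meets every requirement of a solution pair to $\H$---matching domains, $(x(0,0),u(0,0)) \in \C \cup \D$, and the flow/jump relations---is then immediate from the construction, and the bound $\abs{u(t,j)} \leq \varphi(\omega(x(t,j)))$ holds by the ball constraint built into $K(t)$ and $\hat G$.

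The main obstacle I anticipate is the measurable selection over the flow intervals: one must confirm that $t \mapsto K(t)$ is a measurable closed-valued multifunction given that $x(\cdot,j)$ is merely absolutely continuous and $\dot x(\cdot,j)$ merely measurable, and then invoke Filippov's lemma in a form that accommodates the state- and time-dependent constraint set $K(t)$ together with the continuous map $f$. The jump selections and the regularity checks on the resulting input $u$ are routine, and the whole argument parallels \cite[Claim 3.7]{Cai.2009}, the only modifications being the $\varphi(\omega(\cdot))$-weighted ball constraint in place of a fixed input bound and the explicit presence of the flow and jump sets.
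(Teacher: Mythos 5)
Your proposal is correct and takes essentially the same approach as the paper, which writes out no proof of its own but defers to \cite[Claim 3.7]{Cai.2009}: that argument is precisely a Filippov-type measurable selection on the flow intervals combined with a pointwise choice at the countably many jump times, with the fixed input bound there replaced here by the $\varphi(\omega(\cdot))$-weighted ball constraint. The only detail worth adding is that the conclusion requires $\abs{u(t,j)} \leq \varphi(\omega(x(t,j)))$ for \emph{all} $(t,j)\in\dom x$, not merely almost everywhere; this is unproblematic because $x(t,j)\in\hat\C\cup\hat\D$ at every point of $\dom x$ (using relative closedness of $\hat\C$ for endpoints of flow intervals), so your constraint set $K(t)$ is nonempty at every $t$ and the selection can be adjusted on the exceptional null set without affecting the a.e.\ flow conditions.
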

The following lemma assures that $\hat\H$ is pre-forward complete.
\begin{lemma} \label{L:11}
Pre-iISS of $\H$ implies that there exists $\varphi \in\Kinf$ such that $\hat{\mathcal{H}}$ is pre-forward complete.
\end{lemma}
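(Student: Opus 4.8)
The plan is to turn the pre-iISS estimate \eqref{eq:e1} into a self-referential (hybrid Gronwall) bound on $\omega$ along the solutions of $\hat{\H}$, and to read off pre-forward completeness from the resulting absence of finite escape times. First I would record the completeness prerequisite for $\H$ itself: since $\H$ is pre-iISS, for every solution pair $(x,u)$ with $u\in\mathcal{L}^e_{\gamma_1,\gamma_2}$ the estimate \eqref{eq:e1} bounds $\omega(x(t,j))$ by a finite quantity on every bounded portion of $\dom x$, so no such solution can escape $\X$ in finite hybrid time; under the Standing Assumptions this yields that $\H$ is pre-forward complete, which is exactly the hypothesis needed to invoke Claim~\ref{Co:1}.

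Next I would take an arbitrary maximal solution $x\in\hat\varrho(\xi)$ of $\hat{\H}$ and use Claim~\ref{Co:1} to produce a hybrid input $u$ such that $(x,u)$ is a solution pair to $\H$ with $\abs{u(t,j)}\le\varphi(\omega(x(t,j)))$ for all $(t,j)\in\dom x$. By the defining property of $\varphi$, namely $\max\{\gamma_1\circ\varphi(s),\gamma_2\circ\varphi(s)\}\le\alpha(s)$, this gives $\gamma_1(\abs{u(s,i(s))})\le\alpha(\omega(x(s,i(s))))$ and $\gamma_2(\abs{u(t',j')})\le\alpha(\omega(x(t',j')))$, so that $\norm{u_{(t,j)}}_{\gamma_1,\gamma_2}$ is dominated by the integral of $\alpha(\omega(x))$ over flows plus the sum of $\alpha(\omega(x))$ over jumps. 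Substituting into \eqref{eq:e1} and using that $\tilde\beta\in\KLL$ is nonincreasing in its last two arguments yields, with $c:=\tilde\beta(\omega(\xi),0,0)$,
\begin{equation*}
\alpha(\omega(x(t,j)))\le c+\int_0^t\alpha(\omega(x(s,i(s))))\,\mathrm{d}s+\!\!\!\!\sum_{\substack{(t',j')\in\Gamma(x)\\(0,0)\preceq(t',j')\prec(t,j)}}\!\!\!\!\alpha(\omega(x(t',j'))).
\end{equation*}

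Finally I would apply a hybrid Gronwall argument to the scalar hybrid signal $y:=\alpha\circ\omega\circ x$: between jumps the flow term forces the accumulated bound to grow no faster than exponentially in $t$, while each jump at most doubles it, which leads to an estimate of the form $\alpha(\omega(x(t,j)))\le c\,2^{j}e^{t}$. Hence $\omega$ stays bounded whenever $t+j$ is bounded, so $x$ admits no finite escape time to the boundary of $\X$ or to infinity; since the data of $\hat{\H}$ satisfy the Hybrid Basic Conditions, standard continuation results for hybrid inclusions (cf.~\cite{Goebel.2012}) then give that $\hat{\H}$ is pre-forward complete.

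The main obstacle is the apparent circularity in the middle step: invoking \eqref{eq:e1} requires $u\in\mathcal{L}^e_{\gamma_1,\gamma_2}$, i.e.\ finiteness of $\norm{u_{(t,j)}}_{\gamma_1,\gamma_2}$, yet that finiteness is precisely what the Gronwall bound is meant to establish. I would resolve this by a truncation argument: restrict attention to hybrid times $(t,j)$ lying in a compact sub-hybrid-time-domain strictly before any putative escape, on which $\omega(x)$ is continuous and hence bounded, so the correspondingly truncated input has finite energy and \eqref{eq:e1} applies there; the resulting Gronwall estimate is uniform and independent of how closely one approaches the escape time, which contradicts blow-up and thereby certifies pre-forward completeness of $\hat{\H}$.
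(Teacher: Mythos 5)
Your proof is correct and follows essentially the same route as the paper's: invoke Claim~\ref{Co:1} to attach to each solution of $\hat\H$ a solution pair of $\H$ with $\abs{u(t,j)}\leq\varphi(\omega(x(t,j)))$, use the defining property $\max\{\gamma_1\circ\varphi,\gamma_2\circ\varphi\}\leq\alpha$ to turn \eqref{eq:e1} into the self-referential inequality $\alpha(\omega(x(t,j)))\leq c+\int_0^t\alpha(\omega(x(s,i(s))))\,\mathrm{d}s+\sum\alpha(\omega(x(t',j')))$, and close with a hybrid Gronwall bound (the paper cites \cite[Proposition 1]{Noroozi.2014}, where you argue it directly; your bound $c\,2^{j}e^{t}$ and the paper's $c\,e^{t+j}$ are interchangeable), concluding absence of finite escape and hence pre-forward completeness. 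Your extra care about the pre-forward-completeness hypothesis of Claim~\ref{Co:1} and the truncation argument against circularity are harmless refinements of the same argument (indeed, since the norm $\norm{u_{(t,j)}}_{\gamma_1,\gamma_2}$ is only required to be finite at points $(t,j)$ of the domain, where it is automatic for locally bounded inputs, no genuine circularity arises).
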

\begin{proof} Let $d \colon \mathrm{dom} \, d \to \overline{\mathbb{B}}$ be a hybrid input with $\mathrm{dom} \, d = \mathrm{dom} \, x$ such that $d \in \mathcal{M}$, where 
\begin{align}
\mathcal{M} := \Big\{ & d \in \overline{\mathbb{B}} \colon \Big(x(t,j),\varphi \big(\omega (x(t,j))\big) d(t,j)\Big) \in \C \cup \D  \quad \forall (t,j) \in \mathrm{dom}\, x \Big\} . &\nonumber
\end{align}
By the definition of $\hat{\H}$, Claim~\ref{Co:1}, the pre-iISS assumption of $\mathcal{H}$ and the fact that $\max \{ \gamma_1 \circ\varphi(s), \gamma_2\circ\varphi(s) \} \leq \alpha (s)$ for all $s \in \mathbb{R}_{\geq 0}$, for each solution $x_\varphi$ to $\hat{\mathcal{H}}$, there exists a solution pair $(x_\varphi,\varphi(\omega(x_{\varphi}))d)$ to $\H$ with $d \in \mathcal{M}$ such that the following holds
\begin{align}
\alpha(\omega(x_\varphi (t,j,\xi))) \leq & \tilde{\beta} (\omega(\xi),t,j) + \int_0^t \gamma_{1}(\abs{d(s,i(s))} \varphi(\omega (x_{\varphi}(s,i(s),\xi)) )) \textmd{d}s & \nonumber \\
& + \!\!\!\!\!\!\!\!\!\!\!\!\!\!\!\sum_{\scriptsize{\begin{array}{c}(t^\prime,j^\prime) \in \Gamma(x_{\varphi}),\\(0,0) \preceq (t^{\prime},j^{\prime}) \prec (t,j)\end{array}}} \!\!\!\!\!\!\!\!\!\!\!\!\!\!\!\gamma_2 (\abs{d(t^\prime,j^\prime)} \varphi(\omega (x_{\varphi}(t^\prime,j^\prime,\xi)) )) & \nonumber\\
\leq & \tilde{\beta}_0 (\omega(\xi)) + \int_0^t \alpha(\omega (x_\varphi(s,i(s),\xi)) ) \mathrm{d}s + \!\!\!\!\!\!\!\!\!\!\!\!\!\!\!\sum_{\scriptsize{\begin{array}{c}(t^\prime,j^\prime) \in \Gamma(x_{\varphi}),\\(0,0) \preceq (t^{\prime},j^{\prime}) \prec (t,j)\end{array}}} \!\!\!\!\!\!\!\!\!\!\!\!\!\!\!\alpha (\omega (x_{\varphi}(t^\prime,j^\prime,\xi))) \label{eq:e10}
\end{align}
where $\tilde{\beta}_0 (\cdot) := \tilde{\beta}(\cdot,0,0)$. It follows with \cite[Proposition 1]{Noroozi.2014} that
$$
\alpha(\omega(x_{\varphi}(t,j,\xi))) \leq \tilde{\beta}_0 (\omega(\xi)) e^{t+j} \qquad \forall (t,j) \in \mathrm{dom} \, x .
$$
Therefore, the maximal solution $x$ is bounded if the corresponding hybrid domain is compact. It shows that every maximal solution of $x$ is either bounded or complete.
\end{proof}
The following hybrid inclusion is defined by
\begin{align*}
& \hat\H_\sigma := \left\{ \begin{array}{lccl}
\dot{\overline{x}} & \in & \hat{F}_\sigma (\overline{x}) & \quad \overline{x} \in \hat\C_\sigma \\
 \overline{x}^+ & \in & \hat{G}_\sigma (\overline{x}) & \quad \overline{x} \in \hat\D_\sigma
\end{array} \right. &
\end{align*}
where
\begin{eqnarray}
\hat{F}_\sigma (\overline{x}) & := & \big\{ \nu \in \Rn \colon \nu \in f_\sigma \left( \overline{x} , u \right) , u \in \mathcal{U} \cap \varphi(\omega(\overline{x})) \overline{\mathbb{B}} \;\textrm{and}\; (\overline{x},u) \in \C_\sigma \big\} , \nonumber \\
\hat{G}_\sigma (\overline{x}) & := & \big\{ \nu \in \X \colon \nu \in g_\sigma \left( \overline{x} , u \right) , u \in \mathcal{U} \cap \varphi(\omega(\overline{x})) \overline{\mathbb{B}} \;\textrm{and}\; (\overline{x},u) \in \mathcal{D}_{\sigma} \big\} , \nonumber \\
\hat\C_\sigma & := & \big\{ \overline{x} \in \X \colon \exists u \in \mathcal{U} \cap \varphi(\omega(\overline{x})) \textrm{ such that } (\overline{x},u) \in \C_\sigma \big\} , \nonumber \\
\hat\D_\sigma & := & \big\{ \overline{x} \in \X \colon \exists u \in \mathcal{U} \cap \varphi(\omega(\overline{x})) \textrm{ such that } (\overline{x},u) \in \D_\sigma \big\} . \nonumber
\end{eqnarray}
that is extended from $\hat{\H}$. We denote $\hat{\H}_\sigma$ by $(\hat{F}_\sigma,\hat{G}_\sigma,\hat\C_\sigma,\hat\D_\sigma,\X)$.
Since $\sigma$ is an admissible perturbation radius, $\hat\C_\sigma \cup \hat\D_\sigma = \hat\C \cup\hat\D$.
A maximal solution to $\hat{\mathcal{H}}_\sigma$ starting from $\overline\xi$ is denoted by $\overline x_\varphi (\cdot,\cdot,\xi)$.
Let $\hat{\varrho}_\sigma (\xi)$ denote the set of all maximal solution to $\hat{\H}_\sigma$ starting from $\xi \in \X$.
It is straightforward to see the combination of Lemma~\ref{L:11} and Theorem~\ref{P:02} ensures that $\hat{\mathcal{H}}_{\sigma}$ is pre-forward complete.
\begin{corollary} \label{C:3}
Pre-iISS of $\H_\sigma$ implies that there exists $\varphi\in\Kinf$ such that $\hat\H_\sigma$ is pre-forward complete.
\hfill$\Box$
\end{corollary}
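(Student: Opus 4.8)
The plan is to transcribe the proof of Lemma~\ref{L:11} almost verbatim, with $\H_\sigma$ in the role of $\H$ and $\hat\H_\sigma$ in the role of $\hat\H$. By hypothesis (supplied by Theorem~\ref{P:02}) the perturbed system $\H_\sigma$ is pre-iISS with respect to $\A$, say with comparison functions $\alpha,\gamma_1,\gamma_2\in\K$ and $\tilde\beta\in\KLL$ obeying \eqref{eq:e1}. Just as in the construction of $\hat\H$, I would fix $\varphi\in\Kinf$ with $\max\{\gamma_1\circ\varphi(s),\gamma_2\circ\varphi(s)\}\leq\alpha(s)$ for all $s\in\Rp$ and build $\hat\H_\sigma$ with this $\varphi$. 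Since the $\sigma$-inflation is designed to preserve the Hybrid Basic Conditions, $\hat\H_\sigma$ is well-posed, and the estimate \eqref{eq:e1} keeps $\omega\circ x$ bounded on every compact sub-domain; this precludes finite-time escape and thus yields pre-forward completeness of $\H_\sigma$, which is the hypothesis needed to invoke the analogue of Claim~\ref{Co:1}.

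Granting that analogue, for every solution $\overline x_\varphi$ to $\hat\H_\sigma$ there is a hybrid input $d$ valued in $\overline{\mathbb{B}}$, with $\dom d=\dom\overline x_\varphi$, such that $(\overline x_\varphi,\varphi(\omega(\overline x_\varphi))d)$ is a solution pair to $\H_\sigma$ and $\abs{\varphi(\omega(\overline x_\varphi))d(t,j)}\leq\varphi(\omega(\overline x_\varphi(t,j)))$ throughout $\dom\overline x_\varphi$. I would then substitute this pair into \eqref{eq:e1} for $\H_\sigma$ and use $\max\{\gamma_1\circ\varphi,\gamma_2\circ\varphi\}\leq\alpha$ to dominate the flow integral by $\int_0^t\alpha(\omega(\overline x_\varphi(s,i(s),\xi)))\,\mathrm{d}s$ and the jump sum by the corresponding sum of $\alpha(\omega(\overline x_\varphi))$ terms, reproducing inequality \eqref{eq:e10} exactly. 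Applying \cite[Proposition 1]{Noroozi.2014} then gives $\alpha(\omega(\overline x_\varphi(t,j,\xi)))\leq\tilde\beta_0(\omega(\xi))\,e^{t+j}$ with $\tilde\beta_0(\cdot):=\tilde\beta(\cdot,0,0)$, so $\omega\circ\overline x_\varphi$ stays bounded on every compact portion of its hybrid time domain and every maximal solution of $\hat\H_\sigma$ is either bounded or complete, i.e. $\hat\H_\sigma$ is pre-forward complete.

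The only step that is not a literal copy is the selection result of the second paragraph. Its content is that the inflated data $(f_\sigma,g_\sigma,\C_\sigma,\D_\sigma)$, together with the constraint $u\in\mathcal{U}\cap\varphi(\omega(\overline x))\overline{\mathbb{B}}$, still admits for each $\hat\H_\sigma$-solution a measurable input realizing it as an $\H_\sigma$-solution pair. This is where I expect the main (though mild) effort to lie; it goes through because the input enters $\hat\H_\sigma$ exactly as it enters $\hat\H$, so the argument behind Claim~\ref{Co:1} (namely \cite[Claim 3.7]{Cai.2009}) applies without change. With the selection in hand, every remaining estimate coincides with those in Lemma~\ref{L:11}, which is why the conclusion is labeled straightforward.
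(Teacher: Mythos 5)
Your proposal is correct and takes essentially the same route as the paper, which dispatches this corollary in a single sentence by observing that the argument of Lemma~\ref{L:11} applies verbatim with $\H_\sigma$ and $\hat\H_\sigma$ in place of $\H$ and $\hat\H$, the pre-iISS hypothesis being supplied by Theorem~\ref{P:02}. Your added care about the analogue of Claim~\ref{Co:1} for the inflated data is detail the paper leaves implicit, and it is consistent with the paper's remark that $\hat\H_\sigma$ inherits the Standing Assumptions by \cite[Proposition 3.1]{Cai.2007}.
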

It should be pointed out that, by \cite[Proposition 3.1]{Cai.2007}, $\hat\H_\sigma$ satisfies the Standing Assumptions as long as $\hat{\mathcal{H}}$ satisfies the same conditions and $\sigma$ is an admissible perturbation radius.

\subsubsection{The preliminary function $V_0$} 
We start constructing the smooth converse iISS Lyapunov function with giving a possibly nonsmooth function $V_0$. Before proceeding to the main result of this subsection, we define the following set. Consider a hybrid signal $d \colon \mathrm{dom} \, d \to \overline{\mathbb{B}}$ with $\mathrm{dom} \, d = \mathrm{dom} \, \overline{x}$ such that $d \in \overline{\mathcal{M}}$, where 
$$
\overline{\mathcal{M}} := \big\{ d \in \overline{\mathbb{B}} \colon (\overline{x}(t,j),\varphi (\omega (\overline{x}(t,j))) d(t,j)) \in \mathcal{C}_\sigma \cup \mathcal{D}_{\sigma} \quad \forall (t,j) \in \mathrm{dom}\, \overline{x}  \big \} .
$$
\begin{lemma} \label{L:15}
Let $\mathcal{A} \subset \X$ be a compact set. Also, let $\sigma \colon \X \to \Rp$ be an admissible perturbation radius that is positive on $\X \backslash \A$.
Let $\omega$ be a proper indicator on $\X$ for $\A$. Assume that $\H_\sigma$ is pre-iISS with respect to $\A$ satisfying~\eqref{eq:e1} with suitable functions~$\alpha \in \Kinf$,~$\overline{\beta} \in \KLL$, $\overline{\gamma}_1,\overline{\gamma}_2 \in \K$.
Let $\varphi  \in \Kinf$ such that $\max \{ \overline{\gamma}_1 \circ\varphi(s), \overline{\gamma}_2 \circ\varphi(s) \} \leq \overline{\alpha}(s)$ for all $s \in \Rp$.
Then there exists a function $V_0 \colon \X \to \Rp$ defined by
\begin{align}
V_0 (\xi) = \sup \big\{ z(t,j,\xi,d) \colon (t,j) \in \mathrm{dom} \, \overline{x}_{\varphi} \, , \, d \in \overline{\mathcal{M}} \big\} \label{eq:e53}
\end{align}
where for each $\xi \in \X$ and $d \in \overline{\mathcal{M}}$, $z(\cdot,\cdot,\xi,d)$ is defined by
\begin{align}
z(t,j,\xi,d) := & \alpha( \omega ( \overline{x}_{\varphi} (t,j,\xi) ) ) - \int_0^t \overline{\gamma}_1 (\abs{d(s,i(s))} \, \varphi ( \omega ( \overline{x}_\varphi (s,i(s),\xi) ) ) ) \mathrm{d} s  & \nonumber \\
&- \!\!\!\!\!\!\!\!\!\!\!\!\!\!\! \sum_{\scriptsize{\begin{array}{c}(t^\prime,j^\prime) \in \Gamma(\overline{x}_{\varphi}),\\(0,0) \preceq (t^{\prime},j^{\prime}) \prec (t,j)\end{array}}} \!\!\!\!\!\!\!\!\!\!\!\!\!\!\! \overline\gamma_2 (\abs{d(t^\prime,j^\prime)} \varphi(\omega (\overline{x}_{\varphi}(t^\prime,j^\prime,\xi)) )) \label{eq:e54}
\end{align}
such that
\begin{align}
& \alpha ( \omega (\xi) )  \leq V_0 (\xi) \leq \overline\beta_0 (\omega (\xi)) \quad \quad \forall \xi \in \X , \,\mathrm{ and } \;\,\overline\beta_0 (\cdot) := \overline\beta (\cdot,0,0) , & \label{eq:e56} \\
& V_0 (\overline{x}_{\varphi} (h,0,\xi)) - V_0 (\xi) \leq \int_{0}^{h} \overline{\gamma}_1 (\abs{\mu} \, \varphi ( \omega ( \overline{x}_{\varphi} (s,0,\xi) ) ) ) \mathrm{d} s \nonumber\\
&\qquad\qquad\qquad\qquad \forall \xi \in \hat{\C} \backslash \A , \abs{\mu} \leq 1, \overline{x}_\varphi \in \hat{\varrho}_\sigma (\xi) \,\mathrm{ with }\, (h,0) \in \dom \, \overline{x}_\varphi , & \label{eq:e57} \\
& V_0 (g) - V_0 (\xi) \leq \overline{\gamma}_2 (\abs{\mu} \varphi (\omega(\xi))) \;\; \forall \xi \in \hat{\D} , g \in \hat{G}(\xi) , \abs{\mu} \leq 1 . & \label{eq:e58}
\end{align}
\end{lemma}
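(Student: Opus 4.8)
The plan is to read off the three displayed estimates directly from the definition \eqref{eq:e53}--\eqref{eq:e54}, treating $V_0$ as an ``available-storage''-type supremum and exploiting the causal (semigroup) structure of solutions to $\hat{\H}_\sigma$. I would begin with well-definedness and the two-sided bound \eqref{eq:e56}. For the lower bound, evaluate $z$ at the hybrid time $(0,0)$ with the admissible choice $d\equiv0$: since $\overline{x}_\varphi(0,0,\xi)=\xi$ and both the integral and the sum in \eqref{eq:e54} are empty, $z(0,0,\xi,0)=\alpha(\omega(\xi))$, and $d\equiv0\in\overline{\mathcal{M}}$ because $(\xi,0)\in\C\cup\D\subset\C_\sigma\cup\D_\sigma$ (using $\Pi_0(\C)\cup\Pi_0(\D)=\X$ together with $\C\subset\C_\sigma$, $\D\subset\D_\sigma$); hence $V_0(\xi)\ge\alpha(\omega(\xi))$. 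For the upper bound, I would apply the pre-iISS estimate \eqref{eq:e1} for $\H_\sigma$ along the associated solution pair $\big(\overline{x}_\varphi,\varphi(\omega(\overline{x}_\varphi))d\big)$ of $\H_\sigma$ (in the spirit of Claim~\ref{Co:1}): this yields $\alpha(\omega(\overline{x}_\varphi(t,j,\xi)))\le\overline\beta(\omega(\xi),t,j)+(\text{integral})+(\text{sum})$, so subtracting exactly the integral and sum in \eqref{eq:e54} gives $z(t,j,\xi,d)\le\overline\beta(\omega(\xi),t,j)\le\overline\beta(\omega(\xi),0,0)=\overline\beta_0(\omega(\xi))$, the last step because $\overline\beta(\omega(\xi),\cdot,\cdot)$ is nonincreasing in each of its last two arguments. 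Taking the supremum simultaneously establishes finiteness of $V_0$ and the bound $V_0(\xi)\le\overline\beta_0(\omega(\xi))$.

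For the flow inequality \eqref{eq:e57}, fix $\xi\in\hat\C\backslash\A$, a solution $\overline{x}_\varphi\in\hat\varrho_\sigma(\xi)$ with $(h,0)\in\dom\overline{x}_\varphi$, and set $\xi':=\overline{x}_\varphi(h,0,\xi)$. The idea is concatenation: I prepend the pure-flow segment $\overline{x}_\varphi(\cdot,0,\xi)$ on $[0,h]\times\{0\}$, driven by a normalized input of magnitude at most $1$ (the $\mu$ in the statement), to an arbitrary $\overline{y}\in\hat\varrho_\sigma(\xi')$ with normalizer $d'$, obtaining a solution of $\hat\H_\sigma$ from $\xi$; here I use $\hat\C\subset\hat\C_\sigma$ so that $\xi$ is a flow point of the perturbed system and Corollary~\ref{C:3} so that the glued object extends. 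Because the running integral and the jump sum in \eqref{eq:e54} are additive along the hybrid time domain and no jump occurs on $[0,h]\times\{0\}$, the functional splits as
\begin{equation*}
z(h+t',j',\xi,\tilde d)=z(t',j',\xi',d')-\int_0^h\overline\gamma_1\big(\abs{\mu}\,\varphi(\omega(\overline{x}_\varphi(s,0,\xi)))\big)\,\mathrm{d}s .
\end{equation*}
Since the left-hand side is $\le V_0(\xi)$ for every admissible $\overline{y},d'$ and the integral does not depend on $(t',j',d')$, taking the supremum over solutions from $\xi'$ converts the first term into $V_0(\xi')$ and delivers exactly \eqref{eq:e57}.

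The jump inequality \eqref{eq:e58} follows by the same device, now prepending a single jump. Given $\xi\in\hat\D$ and $g\in\hat{G}(\xi)$, there is $u\in\U\cap\varphi(\omega(\xi))\overline{\mathbb{B}}$ with $g=g(\xi,u)$; writing $\abs{u}=\abs{\mu}\,\varphi(\omega(\xi))$ and using $\hat{G}(\xi)\subset\hat{G}_\sigma(\xi)$, the one-step jump $\xi\mapsto g$ is an admissible initial segment of $\hat\H_\sigma$ that I concatenate with an arbitrary $\overline{y}\in\hat\varrho_\sigma(g)$. The junction jump at $(0,0)$ contributes precisely one term $\overline\gamma_2(\abs{\mu}\,\varphi(\omega(\xi)))$ to the sum in \eqref{eq:e54}, so $z(t',j'+1,\xi,\tilde d)=z(t',j',g,d')-\overline\gamma_2(\abs{\mu}\,\varphi(\omega(\xi)))$; supremizing over solutions from $g$ gives $V_0(g)-\overline\gamma_2(\abs{\mu}\,\varphi(\omega(\xi)))\le V_0(\xi)$, i.e.\ \eqref{eq:e58}. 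The main obstacle I anticipate is the concatenation step itself: I must verify that gluing the prepended flow (resp.\ jump) segment of $\hat\H_\sigma$ to an arbitrary maximal solution from the successor state again yields a genuine solution of $\hat\H_\sigma$ with a measurable normalized input $\tilde d\in\overline{\mathcal{M}}$, and that the integral and sum in \eqref{eq:e54} decompose additively at the concatenation time with no double counting of the junction point. This rests on the causal structure of hybrid solutions, on the inclusions $\C\subset\C_\sigma$, $\D\subset\D_\sigma$ (equivalently $\hat\C\subset\hat\C_\sigma$ and $\hat{G}\subset\hat{G}_\sigma$), and on pre-forward completeness from Corollary~\ref{C:3}; the remaining work is routine bookkeeping of the comparison functions.
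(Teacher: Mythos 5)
Your proposal is correct and takes essentially the same route as the paper's proof in Appendix~\ref{Ap:K}: the lower bound in \eqref{eq:e56} by evaluating $z$ at $(t,j)=(0,0)$, the upper bound from the pre-iISS estimate of $\H_\sigma$ along the associated solution pairs (the paper invokes the first inequality of \eqref{eq:e10} with $\abs{d}\leq 1$), and the flow and jump inequalities \eqref{eq:e57}--\eqref{eq:e58} by prepending an initial flow segment (respectively a single jump) driven by the normalized input $\mu$ and using the additive splitting of the integral/sum functional together with the supremum definition of $V_0$. The concatenation bookkeeping you flag as the main obstacle is exactly what the paper carries out via the semigroup property of solutions to $\hat\H_\sigma$ and the extended signal $\tilde d$ of \eqref{eq:e84}, which equals $\mu$ on the prepended segment and the time-shifted $d$ afterwards.
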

\begin{proof}
See Appendix~\ref{Ap:K}.
\end{proof}
\subsubsection{Initial smoothing}
Here we construct a partially smooth function on $\X$ from $V_{0}$.
\begin{lemma} \label{L:12}
Let $\A \subset \X$ be a compact set. Also, let $\sigma \colon \X \to \Rp$ be an admissible perturbation radius that is positive on $\X \backslash \A$. Let $\omega$ be a proper indicator on $\X$ for $\A$.
Assume that $\H_\sigma$ is pre-iISS with respect to $\A$.
Then for any $\xi \in \X$ and $\abs{\mu} \leq 1$, there exist $\underline\alpha_s, \overline\alpha_s, \tilde\gamma_1 , \tilde\gamma_2 \in \Kinf$, and a continuous function $V_s \colon \X \to \Rp$, smooth on $\X \backslash \A$, such that
\begin{eqnarray*}
\underline\alpha_s ( \omega (\xi) )  \leq V_s (\xi) &\leq& \overline{\alpha}_{s} (\omega (\xi)) \qquad \;\;\,\quad \forall \xi \in \X ,  \\
\max_{f \in \hat{F}(\xi)} \langle \nabla V_{s} (\xi),f \rangle &\leq& \tilde{\gamma}_1 (\abs{\mu} \varphi (\omega(\xi))) \quad \forall \xi \in \hat{\C} \backslash \mathcal{A} ,  \\
\max_{g \in \hat{G}(\xi)} V_s (g) - V_s (\xi) &\leq& \tilde{\gamma}_{2} (\abs{\mu} \varphi (\omega(\xi))) \quad \forall \xi \in \hat{\D} . 
\end{eqnarray*}
\end{lemma}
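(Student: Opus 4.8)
The plan is to obtain $V_s$ by locally mollifying $V_0$ on the open set $\X\backslash\A$ and gluing the pieces with a smooth partition of unity, in the spirit of the smoothing procedure of \cite[Sections VI.B--C]{Cai.2007}. The decisive feature to exploit is that the decrease estimates \eqref{eq:e57} and \eqref{eq:e58} for the $V_0$ of Lemma~\ref{L:15} were established along solutions of the \emph{inflated} auxiliary system $\hat{\H}_\sigma$, not merely along $\hat{\H}$. Consequently, whenever the mollification radius at a point stays below the inflation radius $\sigma$ there, the averaged function inherits these estimates up to an error that can be made arbitrarily small, which is precisely what lets us pass from the non-smooth $V_0$ to a smooth $V_s$ while keeping the inequalities of the correct form. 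Local boundedness of $V_0$, needed to make the mollification well-defined, is immediate from \eqref{eq:e56} and continuity of $\omega$.

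For the flow estimate, I would fix an open subset $O\subset\X\backslash\A$ on which $\sigma$ is bounded below by some $\epsilon>0$, choose a standard mollifier $\phi_\epsilon$ supported in $\epsilon\overline{\mathbb{B}}$, and set $V_\epsilon := V_0 * \phi_\epsilon$, which is smooth on $O$. For $\xi\in O\cap(\hat{\C}\backslash\A)$ and $f\in\hat{F}(\xi)$, the directional derivative $\langle\nabla V_\epsilon(\xi),f\rangle$ is an average over $y\in\epsilon\overline{\mathbb{B}}$ of the one-sided derivatives of $V_0$ at the nearby points $\xi-y$ in the direction $f$. Since $|\xi-(\xi-y)|\le\epsilon\le\sigma(\xi-y)$, the direction $f$ is an admissible flow velocity of $\hat{\H}_\sigma$ at $\xi-y$ (up to the harmless enlargement of the input bound from $\varphi(\omega(\xi))$ to $\varphi(\omega(\xi-y))$, absorbed as in \cite{Cai.2007}), so \eqref{eq:e57} bounds each such derivative by $\overline{\gamma}_1(|\mu|\varphi(\omega(\xi-y)))$. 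Using continuity of $\omega$ and $\varphi$ on the compact set $\xi+\epsilon\overline{\mathbb{B}}$ to absorb the $O(\epsilon)$ variation, I obtain $\langle\nabla V_\epsilon(\xi),f\rangle\le\tilde\gamma_1(|\mu|\varphi(\omega(\xi)))$ for a suitable $\tilde\gamma_1\in\Kinf$.

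The jump estimate follows in the same way: for $\xi\in\hat{\D}$ and $g\in\hat{G}(\xi)$, the inflation gives $g\in\hat{G}_\sigma(\xi-y)$ and the target tolerance built into $g_\sigma$ lets \eqref{eq:e58} transfer from $V_0$ to $V_\epsilon$ at source and target simultaneously, yielding $V_\epsilon(g)-V_\epsilon(\xi)\le\tilde\gamma_2(|\mu|\varphi(\omega(\xi)))$. I would then take a locally finite cover $\{O_i\}$ of $\X\backslash\A$ with $\sigma$ bounded below on each $O_i$, perform the above on each $O_i$ with its own radius $\epsilon_i$, and glue the smooth pieces with a subordinate smooth partition of unity $\{\psi_i\}$, setting $V_s=\sum_i\psi_i V_{\epsilon_i}$; since both the flow and jump inequalities are preserved under convex combinations, the glued function is smooth on $\X\backslash\A$ and still satisfies the two decrease bounds. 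The sandwich bound $\underline\alpha_s(\omega(\xi))\le V_s(\xi)\le\overline\alpha_s(\omega(\xi))$ follows from \eqref{eq:e56} together with the tolerance control of each local mollification provided by \cite{Cai.2007}, and forces $V_s(\xi)\to0$ as $\omega(\xi)\to0$, which gives continuity of $V_s$ across the boundary of $\A$ where only continuity is required.

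The main obstacle is the degeneration $\sigma(\xi)\to0$ as $\xi\to\A$: the admissible mollification radius shrinks to zero near $\A$, so no single global convolution can work and the position-dependent partition-of-unity construction is unavoidable. The delicate points are then to control the accumulation of the smoothing errors across overlapping patches uniformly in $\xi$ and to preserve the flow and jump estimates \emph{simultaneously}, exactly the bookkeeping that the machinery of \cite{Cai.2007,Teel.2000} is designed to handle; restricting smoothness to $\X\backslash\A$ (and asking only continuity on $\A$) is what makes this feasible, since near $\A$ the decrease rate need not vanish and a globally smooth construction would be obstructed.
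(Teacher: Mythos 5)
Your central insight is the same one the paper exploits: the estimates \eqref{eq:e57}--\eqref{eq:e58} of Lemma~\ref{L:15} hold along solutions of the \emph{inflated} system $\hat\H_\sigma$, so a mollification whose radius stays under the inflation margin can inherit them. The genuine gap is in your gluing step. Writing $V_s=\sum_i\psi_i V_{\epsilon_i}$ gives $\nabla V_s=\sum_i\psi_i\nabla V_{\epsilon_i}+\sum_i V_{\epsilon_i}\nabla\psi_i$, and only the first sum is a convex combination of the patchwise gradients; the second is an error term which, using $\sum_i\nabla\psi_i=0$, equals $\sum_i\bigl(V_{\epsilon_i}-V_{\epsilon_{i_0}}\bigr)\nabla\psi_i$ on overlaps. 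Controlling it requires the mollifications at \emph{different} radii to agree on overlaps to within $o\bigl(1/\abs{\nabla\psi_i}\bigr)$. That is exactly what makes the partition-of-unity smoothing of \cite[Theorem B.1]{Lin.1996} work for \emph{locally Lipschitz} functions, but here $V_0$ is only upper semicontinuous (Proposition~\ref{P:3}): convolving a discontinuous function at two different radii produces functions whose difference near a discontinuity is of the order of the local oscillation of $V_0$ no matter how small the radii are, while $\abs{\nabla\psi_i}$ necessarily blows up as the patches shrink toward $\A$ (forced by $\sigma\to 0$ there). Moreover the required flow bound is tight: for $\mu=0$ it reads $\max_{f\in\hat F(\xi)}\langle\nabla V_s(\xi),f\rangle\leq 0$, so even a bounded uncontrolled error term destroys it. The same defect appears at jumps, since $\xi$ and $g\in\hat G(\xi)$ carry different partition weights, so $V_s(g)-V_s(\xi)$ is not a convex combination of the single-patch differences. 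The machinery of \cite{Cai.2007,Teel.2000} you invoke for this "bookkeeping" does not perform it --- it is designed precisely to avoid it. A secondary flaw: $V_0$, being merely upper semicontinuous, has no one-sided directional derivatives to average; one may only work with difference quotients along solutions and differentiate the smooth object.

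The paper's proof avoids gluing altogether: it uses a single mollification with a \emph{state-dependent} radius, $V_s(\xi):=\int V_0(\xi+\tilde\sigma(\xi)\eta)\,\psi(\eta)\,\mathrm{d}\eta$ on $\X\backslash\A$ and $V_s\equiv 0$ on $\A$ (cf.~\eqref{eq:e89}), with $\tilde\sigma$ smooth and sufficiently small. Well-definedness, continuity and smoothness on $\X\backslash\A$ come from \cite[Theorem 3.1]{Kellett.2005}, which needs only upper semicontinuity of $V_0$; the sandwich bounds come from \cite[Lemma 7.7]{Cai.2007}; and the flow/jump estimates are obtained \emph{trajectory-wise}: an intermediate radius $\sigma_2\leq\sigma$ and $\tilde\sigma$ are constructed so that for every solution $\overline{x}_\varphi$ of $\hat\H_{\sigma_2}$ and every $\eta\in\overline{\mathbb{B}}$ the shifted curve $(\tau,k)\mapsto\overline{x}_\varphi(\tau,k)+\tilde\sigma(\overline{x}_\varphi(\tau,k))\eta$ is again a solution of $\hat\H_\sigma$, so \eqref{eq:e57}--\eqref{eq:e58} apply under the integral sign to difference quotients of $V_s$ along solutions; the gradient bound is then extracted via \cite[Claim 6.3]{Cai.2007} and the mean value theorem applied to the already-smooth $V_s$. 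This way no derivative of $V_0$ is ever taken and no cross-patch error term ever arises, which is exactly the obstruction your construction cannot overcome.
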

\begin{proof} Let the functions $V_0$, $\alpha$, $\overline\beta$, $\overline\gamma_1$, $\overline\gamma_2$ and $\varphi$ come from Lemma~\ref{L:15}.
We begin with giving the following property of $V_0$ whose proof follows from the similar arguments as those in \cite[Proposition 7.1]{Cai.2007} with essential modifications. 
\begin{proposition} \label{P:3}
The function $V_0$ is upper semi-continuous on $\X$.
\hfill$\Box$
\end{proposition}
To prove the lemma, we follow the same approach as the one in \cite[Section VI.B]{Cai.2007} to construct a partially smooth function $V_s$ from $V_0$.
Let $\psi \colon \Rn \rightarrow [0,1]$ be a smooth function which vanishes outside of $\overline{\mathbb{B}}$ satisfying $\int \psi (\xi) \mathrm{d} \xi = 1$ where the integration (throughout this subsection) is over $\Rn$.
We find a partially smooth and sufficiently small function $\tilde{\sigma} \colon \X \backslash \A \rightarrow \Rsp$ and define the function $V_s \colon \X \rightarrow \Rp$ by
\begin{eqnarray} \label{eq:e89}
V_s (\xi) := \left\{ \begin{array}{l}
 0 \quad \quad \quad \quad \quad \quad \qquad \qquad \;\; \mathrm{for} \; \xi \in \A , \\ 
 \int V_{0} (\xi + \tilde{\sigma} (\xi) \eta ) \psi (\eta) \mathrm{d} \eta \quad \; \mathrm{for} \; \xi \in \X \backslash \A . \\ 
 \end{array} \right.
\end{eqnarray}
so that some desired properties (cf. items (a), (b) and (c) below) are met.
In other words, we find an appropriate $\tilde\sigma$ such that the following are obtained
\begin{itemize}
    \item [(a)] The function $V_s$ is well-defined, continuous on $\X$, smooth and positive on $\X \backslash \A$;
\item[(b)] as much as possible for some $\underline\alpha_s,\overline\alpha_s \in \Kinf$ the following conditions hold
\begin{align}
& V_s (\xi) |_{\xi \in \A} = 0, & \label{eq:e88} \\
& \underline\alpha_s (\omega(\xi)) \leq V_s (\xi) \leq \overline\alpha_s (\omega(\xi)) \qquad \forall \xi \in \X ; \label{eq:e2} &
\end{align}
	 \item[(c)] for some $\tilde{\gamma}_1,\tilde{\gamma}_2 \in \Kinf$, it holds that
\begin{align}
\max_{f \in \hat{F}(\xi)} \langle \nabla V_s (\xi),f \rangle & \leq \tilde{\gamma}_1 (\abs{\mu} \varphi (\omega(\xi)) ) \qquad \forall \xi \in \hat{\C} \backslash \A , & \label{eq:e90} \\
\max_{g \in \hat{G}(\xi)} V_s (g) - V_s (\xi) & \leq \tilde{\gamma}_{2} (\abs{\mu} \varphi (\omega(\xi)))  \qquad \forall \xi \in \hat{\D} . \label{eq:e91} &
\end{align}
\end{itemize}
Regarding (a), we appeal to \cite[Theorem 3.1]{Kellett.2005} to achieve the desired properties. This theorem requires that $V_0 (\xi) |_{\xi \in \A} = 0$, which is shown in the previous subsection, $V_0$ is upper semi-continuous on $\X$, which is established by Proposition~\ref{P:3}, and the openness of $\X \backslash \A$, which is guaranteed by \cite[Lemma 7.5]{Cai.2007}.

Regarding (b), the property (\ref{eq:e88}) follows from the definition of $V_s$, the upper semi-continuity of $V_0$, and the openness of $\X \backslash \mathcal{A}$. Also, it follows from \cite[Lemma 7.7]{Cai.2007} that we can pick the function $\tilde\sigma$ sufficiently small such that for any $\mu_1,\mu_2 \in \Kinf$ satisfying
\begin{align}
& \mu_1 (s) < s < \mu_2 (s) \qquad \forall s \in \Rsp , &
\end{align}
the following holds
\begin{align}
& \alpha(\mu_1 (\omega(\xi))) < V_s (\xi) < \overline\beta_0 (\mu_2 (\omega(\xi))) \qquad \forall \xi \in \X . & \label{eq:e96}
\end{align}
So the inequalities (\ref{eq:e2}) are obtained, as well.

Regarding (c), let $\sigma_2$ be a continuous function that is positive on ${\X} \backslash \A$ and that satisfies $\sigma_2 (\xi) \leq \sigma(\xi) $ for all $\xi \in {\X}$.
We first construct functions $\sigma_2$ and $\tilde{\sigma}$ so that for each $\xi \in {\X}\backslash \mathcal{A}$, for each $\overline{x}_\varphi \in \hat{\varrho}_{\sigma_2} (\xi)$, for each $\eta \in \overline{\mathbb{B}}$ and $(t,j) \in \mathrm{dom}\,\overline{x}_\varphi$ such that $\overline{x}_\varphi (t,j,\xi) \in \X \backslash \A$, the function defined on $(t,j) \in \dom\,\overline{x}_\varphi \cap [0,t] \times \{0,\dots,j\}$ given by $(\tau,k) \mapsto \overline{x}_\varphi (\tau,k) + \tilde{\sigma} (\overline{x}_\varphi (\tau,k)) \eta$ can be extended to a complete solution of $\hat{\H}_\sigma$.
Now, pick a maximal solution $\overline{x}_\varphi (h,m,\xi)$ to $\hat{\H}_{\sigma_2}$. First, let $m = 0$.
So according to the definition of $V_s$, Lemma 7.2 in \cite{Cai.2007}, \eqref{eq:e57} and the fact $\psi \colon \Rn \rightarrow [0,1]$ that we get for any $\abs{\mu} \leq 1$ and for any $\overline{x}_{\varphi} \in \hat{\varrho}_{\sigma_2} (\xi)$ so that $\xi \in \hat{\C} \backslash \A$
\begin{align}
V_s (\overline{x}_\varphi (h,0,\xi)) \leq \! V_s ( \xi ) + \!\!\int\! \Big\{\! \int_0^h \overline{\gamma}_1\! (\abs{\mu}\varphi(\omega(\overline{x}_\varphi (s,0,\xi) + \tilde{\sigma} (\overline{x}_\varphi (s,0,\xi))\eta))) \mathrm{d} s \Big\} \psi (\eta) \mathrm{d} \eta . & \label{eq:e9}
\end{align}
It follows from \cite[Claim 6.3]{Cai.2007} that for any $\xi \in \hat{\C} \backslash \A$ and $f \in \hat{F}(\xi)$, there exists a solution $\overline{x}_\varphi \in \hat{\varrho}_{\sigma_2} (\xi)$ such that for small enough $h > 0$, we get that $(h,0) \in \mathrm{dom} \, \overline{x}_\varphi$ and $\overline{x}_\varphi = \xi + h f$. So it follows with smoothness of $V_s$ on $\X \backslash \A$, Claim 6.3 in \cite{Cai.2007}, the inequality (\ref{eq:e9}) and the mean-value theorem that
\begin{align}
\left\langle {\nabla V_s , f} \right\rangle = & \lim_{h \to 0^{+}} \frac{V_s (\xi + h f) - V_s (\xi)}{h} & \nonumber \\
\leq & \lim_{h \to 0^+} \int \overline{\gamma}_1 (\abs{\mu}\varphi(\omega(z + \tilde{\sigma} (z)\eta ))) \psi (\eta) \mathrm{d} \eta . & \nonumber
\end{align}
where $z$ lies in the line segment joining $\xi$ to $\xi + h f$. It follows from uniform continuity of $\omega$ with respect to $\eta$ on $\overline{\mathbb{B}}$ that for any $\xi \in \hat{\C} \backslash \A$ and $f \in \hat{F}(\xi)$
\begin{align}
\left\langle {\nabla V_s , f} \right\rangle \leq & \int \overline{\gamma}_1 (\abs{\mu}\varphi(\omega(\xi + \tilde{\sigma} (\xi)\eta))) \psi (\eta) \mathrm{d} \eta & \nonumber \\
\leq & \sup_{z \in \xi + \tilde{\sigma} (\xi) \overline{\mathbb{B}}} \overline{\gamma}_1 (\abs{\mu}\varphi(\omega(z))) . & \nonumber
\end{align}
From Claim 7.6 and Lemma 7.7 in \cite{Cai.2007}, there exists some $\sigma_u (\cdot)$ with $\tilde{\sigma} (\xi) \leq \sigma_{u} (\xi)$ for all $\xi \in {\X} \backslash \A$ so that we get for all $\xi \in \hat{\C} \backslash \mathcal{A}$ and $f \in \hat{F}(\xi)$
\begin{align}
\left\langle {\nabla V_s , f} \right\rangle \leq  & \sup_{z \in \xi + \sigma_u (\xi) \overline{\mathbb{B}}} \overline{\gamma}_1 (\abs{\mu}\varphi(\omega(z))) & \nonumber \\
\leq &  \overline{\gamma}_1 (\abs{\mu} \varphi(\mu_2 (\omega(\xi)))) . \label{eq:e100}
\end{align}
Therefore, it is easy to see that for any $\overline\gamma_1 , \varphi , \mu \in \Kinf$ with $\mu_2 > \id$ and any $\abs{\mu} \leq 1$ the exists $\tilde\gamma_1 \in \Kinf$ such that~\eqref{eq:e90} holds.

Now let $(h,m) = (0,1)$. So it follows with the definition of $V_s$, Lemma 7.2 in \cite{Cai.2007}, the growth condition \eqref{eq:e58}, and the fact that $\psi \colon \Rn \rightarrow [0,1]$ that for any $\abs{\mu} \leq 1$ and each $\xi \in \hat{\mathcal{D}}$ and $g \in \hat{G}(\xi)$
\begin{align}
V_{s} (x_{\varphi} (0,1,\xi)) & \leq  V_s (\xi) + \int \overline{\gamma}_{2} (\abs{\mu}\varphi(\omega(\xi + \tilde{\sigma} (\xi) \eta))) \psi (\eta) \mathrm{d} \eta & \nonumber \\
&\leq  V_{s} (\xi) + \sup_{z \in \xi + \tilde{\sigma} (\xi) \overline{\mathbb{B}}} \overline{\gamma}_{2} (\abs{\mu}\varphi(\omega(z))) . & \nonumber
\end{align}
From \cite[Claim 7.6]{Cai.2007} and \cite[Lemma 7.7]{Cai.2007}, there exists $\sigma_{u}$ with $\tilde{\sigma} (\xi) \leq \sigma_{u} (\xi)$ for all $\xi \in {\X} \backslash \mathcal{A}$ so that we have for all $\xi \in \mathcal{D} \backslash \mathcal{A}$ and $g \in \hat{G}(\xi)$
\begin{align}
V_{s} (g) \leq & V_{s} (\xi) + \sup_{z \in \xi + \tilde{\sigma} (\xi) \overline{\mathbb{B}}} \overline{\gamma}_{2} (\abs{\mu}\varphi(\omega(z))) & \nonumber \\
\leq & V_{s} (\xi) + \sup_{z \in \xi + \sigma_{u} (\xi) \overline{\mathbb{B}}} \overline{\gamma}_{2} (\abs{\mu} \varphi( \omega(z))) & \nonumber \\
\leq & V_{s} (\xi) + \overline{\gamma}_{2} (\abs{\mu} \varphi(\mu_{2}(\omega(\xi)))) . & \label{eq:e95}
\end{align}
With the same arguments as those for flows, there exists $\tilde{\gamma}_2 \in \Kinf$ such that the following holds
$$
V_{s} (g) \leq  V_{s} (\xi) + \tilde{\gamma}_{2} (\abs{\mu} \varphi(\omega(\xi))) . 
$$
Moreover, if $\xi \in \hat{\mathcal{D}}$ and $g \in \mathcal{A}$ then $0 = V_{s} (g) \leq V_{s} (\xi) + \tilde{\gamma}_2 (\abs{\mu} \varphi(\omega(\xi)))$. So the growth condition (\ref{eq:e91}) holds.
\end{proof}
\subsubsection{Final smoothing}
The next lemma is to do with smoothing $V_{s}$ on $\mathcal{A}$.
\begin{lemma} \label{L:13}
Let $\mathcal{H}$ be pre-iISS. Also, let $V_{s}$, $\tilde{\gamma}_{1},\tilde{\gamma}_{2}$ and $\varphi$ come from Lemma~\ref{L:12}. For any $\xi \in {\X}$ and $\abs{\mu} \leq 1$, there exist $\underline{\alpha}, \overline{\alpha} \in \Kinf$, and a $\Kinf$-function $p$, smooth on $(0,+\infty)$ such that $V_1 \colon {\X} \to \Rp$ is defined by
\begin{align}
V_{1} (\xi) := p ( V_{s} (\xi)) \qquad \qquad \forall \xi \in {\X} 
\end{align}
where $V_s$, coming from Lemma~\ref{L:12}, is smooth on ${\X}$ and the following hold
\begin{eqnarray}
\underline{\alpha} ( \omega (\xi) ) \leq V_{1} (\xi) &\leq& \overline{\alpha} ( \omega (\xi) ) \;\;\;\;\,\!\qquad \qquad \forall \xi \in {\X} ,  \label{eq:e92} \\
\max_{f \in \hat{F}(\xi)} \langle \nabla V_{1} (\xi),f \rangle &\leq& \tilde{\gamma}_{1} (\abs{\mu} \varphi (\omega(\xi))) \qquad \forall \xi \in \hat{\mathcal{C}} , \label{eq:e93} \\
\max_{g \in \hat{G}(\xi)} V_{1} (g) - V_{1} (\xi) &\leq& \tilde{\gamma}_{2} (\abs{\mu} \varphi (\omega(\xi))) \qquad \forall \xi \in \hat{\mathcal{D}} . \label{eq:e94}
\end{eqnarray}
\end{lemma}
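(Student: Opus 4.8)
The plan is to realize $V_1$ as a single composition $V_1 := p\circ V_s$ and to reduce everything to properties of the scalar reparametrization $p$. First I would record the structure of $V_s$ supplied by Lemma~\ref{L:12}: it is continuous on $\X$, smooth on $\X\backslash\A$, and satisfies $\underline\alpha_s(\omega(\xi))\le V_s(\xi)\le\overline\alpha_s(\omega(\xi))$ with $\underline\alpha_s,\overline\alpha_s\in\Kinf$. Since $\omega$ is a proper indicator for $\A$ on $\X$, the lower bound shows that $V_s(\xi)=0$ if and only if $\xi\in\A$ and that $V_s$ is proper; in particular each sublevel set $\{\xi\in\X : V_s(\xi)\le c\}$ is a compact subset of $\X$. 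The only defect of $V_s$ is thus a possible loss of smoothness on $\A$. I would therefore seek $p\in\Kinf$, smooth on $(0,+\infty)$, flat at the origin (all one-sided derivatives tending to $0$ as $s\to0^+$), and with $0<p'(s)\le 1$ for all $s>0$, so that $V_1=p\circ V_s$ is smooth on all of $\X$ while the comparison functions $\tilde\gamma_1,\tilde\gamma_2$ are left unchanged.

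The construction of $p$ is the crux and is precisely the final-smoothing step of \cite[Section~VI.C]{Cai.2007}, which I would adapt to the present data. On each annulus $\{\xi : a\le V_s(\xi)\le b\}$ with $0<a<b$, which is compact and contained in $\X\backslash\A$, $V_s$ is smooth with partial derivatives of every order bounded; let $M_k(a)$ bound those up to order $k$ on $\{\tfrac12 a\le V_s\le 2a\}$. Expanding the derivatives of $V_1$ by the Fa\`a di Bruno formula, every partial derivative of $V_1$ of order $k$ is a finite sum of terms $p^{(m)}(V_s)$ times products of partial derivatives of $V_s$ of total order $k$. Choosing $p$ so that, for each $m$, $p^{(m)}(s)$ tends to $0$ as $s\to0^+$ faster than the $M_k$ can grow forces each such term to $0$ as $\xi\to\A$; hence all derivatives of $V_1$ extend continuously by $0$ onto $\A$, so $V_1$ is smooth on $\X$ with $\nabla V_1\equiv 0$ on $\A$. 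The reason this is compatible with preserving the bounds is that flatness only requires the derivatives of $p$ to be \emph{small} near $0$ and never large, so $p$ can simultaneously be taken with $\sup_{s>0}p'(s)\le 1$ while remaining of class $\Kinf$ (its slope need only approach a positive constant not exceeding $1$ as $s\to+\infty$). I expect this balancing — fast flatness at $0$ against the a priori uncontrolled blow-up of the derivatives of $V_s$ near $\A$, under the extra cap $p'\le 1$ — to be the main obstacle.

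With such a $p$ fixed, the three estimates follow routinely. Setting $\underline\alpha:=p\circ\underline\alpha_s$ and $\overline\alpha:=p\circ\overline\alpha_s$, both in $\Kinf$, monotonicity of $p$ applied to the sandwich for $V_s$ gives $\underline\alpha(\omega(\xi))\le V_1(\xi)\le\overline\alpha(\omega(\xi))$, which is \eqref{eq:e92}. For the flow bound \eqref{eq:e93}, take $\xi\in\hat{\C}$ and $f\in\hat{F}(\xi)$. If $\xi\in\A$, then $\nabla V_1(\xi)=0$ and $\varphi(\omega(\xi))=\varphi(0)=0$, so both sides vanish; this is exactly where the smoothing on $\A$ lets me extend the bound from $\hat{\C}\backslash\A$ to all of $\hat{\C}$. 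If $\xi\in\hat{\C}\backslash\A$, the chain rule gives $\langle\nabla V_1(\xi),f\rangle=p'(V_s(\xi))\langle\nabla V_s(\xi),f\rangle$: when $\langle\nabla V_s(\xi),f\rangle<0$ the left-hand side is negative and the bound is trivial, and when $\langle\nabla V_s(\xi),f\rangle\ge 0$ the inequality $0<p'(V_s(\xi))\le 1$ together with Lemma~\ref{L:12} yields $\langle\nabla V_1(\xi),f\rangle\le\langle\nabla V_s(\xi),f\rangle\le\tilde\gamma_1(\abs{\mu}\varphi(\omega(\xi)))$.

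For the jump bound \eqref{eq:e94}, fix $\xi\in\hat{\D}$ and $g\in\hat{G}(\xi)$ and write $V_1(g)-V_1(\xi)=p(V_s(g))-p(V_s(\xi))$. If $V_s(g)\le V_s(\xi)$, the difference is nonpositive, hence below $\tilde\gamma_2(\abs{\mu}\varphi(\omega(\xi)))\ge 0$; otherwise the mean value theorem together with $\sup_{s>0}p'(s)\le 1$ gives $V_1(g)-V_1(\xi)\le V_s(g)-V_s(\xi)\le\tilde\gamma_2(\abs{\mu}\varphi(\omega(\xi)))$ by Lemma~\ref{L:12}. The boundary case $\xi\in\A$ is already covered, since Lemma~\ref{L:12} then forces $V_s(g)\le V_s(\xi)=0$. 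This establishes \eqref{eq:e94} and completes the argument, the whole proof resting on the existence of the smoothing reparametrization $p$ discussed above.
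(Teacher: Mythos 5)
Your proposal is correct and follows essentially the same route as the paper's proof: define $V_1 = p\circ V_s$ with $p\in\Kinf$ smooth on $(0,+\infty)$, flat at the origin, and normalized so that $0 < p'(s)\leq 1$; get \eqref{eq:e92} by monotonicity of $p$, \eqref{eq:e93} by the chain rule together with $\nabla V_1 = 0$ and $\varphi(\omega(\xi))=0$ on $\A$, and \eqref{eq:e94} by the mean value theorem. The only difference is that where you sketch the construction of the flat reparametrization $p$ (balancing flatness against the possible blow-up of the derivatives of $V_s$ near $\A$, following \cite[Section VI.C]{Cai.2007}), the paper simply invokes \cite[Lemma 4.3]{Lin.1996} for the existence of $p$ and \cite[p.~1090]{Angeli.2000} for the normalization $p'\leq 1$.
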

\begin{proof} With Lemma 4.3 in \cite{Lin.1996}, there exists a smooth function $p \in \Kinf$ such that $p' (s) > 0$ for all $s > 0$ where $p' (\cdot) := \frac{dp}{ds} (\cdot)$ and $p(V_{s} (\xi))$ is smooth for all $\xi \in {\X}$.
Without loss of generality, one can assume that $p' (s) \leq 1$ for all $s > 0$ (cf. Page 1090 of \cite{Angeli.2000} for more details).
Using the definition of $V_1$ and (\ref{eq:e96}), we have
\begin{align}
& p \circ \alpha \circ \mu_1 ( \omega (\xi) ) \leq V_1 (\xi) \leq p \circ \overline\beta_0 \circ \mu_2 ( \omega (\xi) ) \qquad \forall \xi \in \X . &
\end{align}
Therefore, (\ref{eq:e92}) holds.

It follows from, in succession, the definition of $V_1$, (\ref{eq:e90}) and the fact that $0 < p' (s) \leq 1$ for all $s > 0$ that for all $\xi \in \hat{\mathcal{C}} \backslash \mathcal{A}$
\begin{align*}
\max_{f \in \hat{F}(\xi)} \langle \nabla V_{1} (\xi),f \rangle & \leq p' (V_{2}) \tilde{\gamma}_{1} (\abs{\mu} \varphi (\omega(\xi))) \leq \tilde{\gamma}_{1} (\abs{\mu} \varphi (\omega(\xi))) .
\end{align*}
It follows with the fact that $\nabla V_1 (\xi) = 0$ and $\omega (\xi) = 0$ for all $\xi \in \A$, and $\tilde{\gamma}$ and $\varphi$ are zero at zero that
\begin{align*}
\max_{f \in \hat{F}(\xi)} \langle \nabla V_1 (\xi),f \rangle \leq \tilde{\gamma}_1 (\abs{\mu} \varphi (\omega(\xi))) \qquad \forall \xi \in \hat{\C} .
\end{align*}
It follows with, in succession, the definition of $V_1$, the mean-value theorem, the last inequality of (\ref{eq:e95}), the fact that $0 < p' (s) \leq 1$ for all $s > 0$ that for all $\xi \in \hat{\D}$
\begin{align*}
V_1 (g) - V_1 (\xi) = p'(z) (V_s (g) - V_s (\xi)) \leq \tilde{\gamma}_2 (\abs{\mu} \varphi (\omega(\xi))) 
\end{align*}
where $z$ lies on the segment joining $V_s (\xi)$ to $V_s (g)$.
\end{proof}
\subsubsection{Return to $\H$}
The following lemma is immediately obtained from Lemma~\ref{L:13} and \eqref{eq:er9}.
\begin{lemma} \label{L:14}
Let $\H$ be pre-iISS. Let $\varphi,\tilde{\gamma}_1,\tilde{\gamma}_2 \in \Kinf$ be generated by Lemma~\ref{L:12}. Also, let $\underline{\alpha} , \overline{\alpha} \in \Kinf$ and $V_1 \colon {\X} \to \Rp$ come from Lemma~\ref{L:13}. Then the following hold
\begin{align*}
& \underline{\alpha} ( \omega(\xi) ) \leq V_1 (\xi) \leq \bar{\alpha} ( \omega(\xi) ) \quad \forall\xi \in {\X} , &
\end{align*}
for any $(\xi,u) \in \C$ with $\abs{u} \leq \varphi (\omega(\xi))$
\begin{align*}
& \langle \nabla V_1 (\xi),f (\xi,u) \rangle \leq \tilde{\gamma}_1 (\abs{u} ) , &
\end{align*}
for any $(\xi,u) \in \D$ with $\abs{u} \leq \varphi (\omega(\xi))$
\begin{align*}
& V_1 (g(\xi,u)) - V_1 (\xi) \leq \tilde{\gamma}_2 ( \abs{u} ) . &
\end{align*}
\end{lemma}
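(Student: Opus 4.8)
The plan is to transfer the three estimates established in Lemma~\ref{L:13} for the auxiliary inclusion $\hat{\H}$ back to the original system $\H$, using only the definitions of $\hat{F}$, $\hat{G}$, $\hat{\C}$ and $\hat{\D}$ recorded in \eqref{eq:er9}. The sandwich bound $\underline{\alpha}(\omega(\xi)) \leq V_1(\xi) \leq \overline{\alpha}(\omega(\xi))$ is stated identically in Lemma~\ref{L:13} (see \eqref{eq:e92}), so it carries over verbatim and requires no argument.

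For the flow estimate I would begin with an arbitrary pair $(\xi,u) \in \C$ obeying $\abs{u} \leq \varphi(\omega(\xi))$. By \eqref{eq:er9} this means exactly that $u \in \U \cap \varphi(\omega(\xi)) \overline{\mathbb{B}}$ with $(\xi,u) \in \C$, whence $\xi \in \hat{\C}$ and $f(\xi,u) \in \hat{F}(\xi)$. Consequently $\langle \nabla V_1(\xi), f(\xi,u) \rangle \leq \max_{f \in \hat{F}(\xi)} \langle \nabla V_1(\xi), f \rangle$, which by \eqref{eq:e93} is at most $\tilde{\gamma}_1(\abs{\mu}\varphi(\omega(\xi)))$ for \emph{every} $\abs{\mu} \leq 1$. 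The crucial step is then to exploit this freedom in $\mu$: since $\abs{u} \leq \varphi(\omega(\xi))$, I can select $\mu$ with $\abs{\mu} \leq 1$ so that $\abs{\mu}\varphi(\omega(\xi)) = \abs{u}$ (concretely $\abs{\mu} = \abs{u}/\varphi(\omega(\xi))$ whenever $\varphi(\omega(\xi)) > 0$), which collapses the bound to the desired $\langle \nabla V_1(\xi), f(\xi,u) \rangle \leq \tilde{\gamma}_1(\abs{u})$.

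The jump estimate follows by the identical mechanism applied to \eqref{eq:e94}: from $(\xi,u) \in \D$ with $\abs{u} \leq \varphi(\omega(\xi))$ one reads off $\xi \in \hat{\D}$ and $g(\xi,u) \in \hat{G}(\xi)$, so $V_1(g(\xi,u)) - V_1(\xi) \leq \max_{g \in \hat{G}(\xi)} V_1(g) - V_1(\xi) \leq \tilde{\gamma}_2(\abs{\mu}\varphi(\omega(\xi)))$, and the same choice of $\mu$ yields $\tilde{\gamma}_2(\abs{u})$.

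The only point needing care, which I regard as the main (and rather minor) obstacle, is the degenerate case $\xi \in \A$: there $\omega(\xi) = 0$ forces $\varphi(\omega(\xi)) = 0$ and hence $u = 0$, so the normalization $\abs{\mu} = \abs{u}/\varphi(\omega(\xi))$ is undefined. This case is nonetheless immediate, because $\tilde{\gamma}_1, \tilde{\gamma}_2 \in \Kinf$ vanish at zero and the flow and jump inequalities of Lemma~\ref{L:13} already hold on the full sets $\hat{\C}$ and $\hat{\D}$, which contain $\A$. Hence, once the membership relations from \eqref{eq:er9} are in place, the statement follows with no further work beyond this bookkeeping, which justifies the claim that it is obtained immediately from Lemma~\ref{L:13} and \eqref{eq:er9}.
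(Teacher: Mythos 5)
Your proposal is correct and takes the same route as the paper, which offers no separate proof but declares Lemma~\ref{L:14} to be immediate from Lemma~\ref{L:13} and the definitions in \eqref{eq:er9}; your argument (reading off $\xi \in \hat{\C}$ and $f(\xi,u) \in \hat{F}(\xi)$, respectively $\xi \in \hat{\D}$ and $g(\xi,u) \in \hat{G}(\xi)$, from \eqref{eq:er9}, then choosing $\abs{\mu} = \abs{u}/\varphi(\omega(\xi))$ in \eqref{eq:e93}--\eqref{eq:e94}, with the case $\varphi(\omega(\xi))=0$ treated separately) is exactly the bookkeeping that claim suppresses. The only cosmetic imprecision is your remark that $\hat{\C}$ and $\hat{\D}$ ``contain $\A$''; what is actually needed (and true) is only that the particular $\xi \in \A$ with $(\xi,u)\in\C$ or $(\xi,u)\in\D$ and $u=0$ lies in $\hat{\C}$ or $\hat{\D}$, which follows directly from \eqref{eq:er9}.
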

\subsubsection{A characterization of 0-input pre-AS}
To continue with the proof, we need a dissipation characterization of 0-input pre-AS, which is stated in Proposition~\ref{P:2}. This proposition is a unification and generalization of \cite[Proposition II.5]{Angeli.2000}.
\begin{proposition} \label{P:2}
$\H$ is 0-input pre-AS if and only if there exist a smooth semi-proper function $W \colon {\X} \to \Rp$, $\lambda \in \K$ and a continuous function $\rho \in \mathcal{PD}$ such that
\begin{eqnarray}
\left\langle {\nabla W(\xi),f(\xi,u)} \right\rangle &\leq& - \rho ( \omega(\xi) ) + \lambda ( \abs{u} ) \quad \forall (\xi,u) \in \C , \label{eq:e17} \\
W ( g(\xi ,u) ) - W ( \xi ) & \leq & - \rho ( \omega (\xi ) ) + \lambda ( \abs{u} ) \quad \lambda (\xi,u) \in \D . \label{eq:e18}
\end{eqnarray}
\end{proposition}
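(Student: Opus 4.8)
The plan is to prove both implications, treating sufficiency as the routine direction and necessity (the converse-Lyapunov construction) as the substantial one.

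\emph{Sufficiency.} Suppose $W$, $\rho$, $\lambda$ exist. Setting $u\equiv 0$ and using $\lambda(0)=0$, the inequalities \eqref{eq:e17}--\eqref{eq:e18} reduce to $\langle\nabla W(\xi),f(\xi,0)\rangle\le-\rho(\omega(\xi))$ on $\C$ and $W(g(\xi,0))-W(\xi)\le-\rho(\omega(\xi))$ on $\D$. Writing the semi-proper $W$ as $W=\pi\circ W_0$ with $\pi\in\K$ and $W_0$ proper positive definite yields a sandwich bound $\alpha_1(\omega(\xi))\le W(\xi)\le\alpha_2(\omega(\xi))$ with $\alpha_1,\alpha_2\in\K$. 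Together with the positive-definite decrease along 0-input flows and jumps, $W$ is a smooth Lyapunov function for the disturbance-free hybrid system, and standard hybrid Lyapunov theory (the sufficiency direction in \cite{Cai.2007}, or the invariance-based argument already invoked in the proof of Theorem~\ref{T:1}) certifies that $\A$ is 0-input pre-stable and 0-input pre-attractive, hence 0-input pre-AS.

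\emph{Necessity.} Assume $\A$ is 0-input pre-AS. First I invoke the smooth converse Lyapunov theorem for pre-asymptotically stable compact sets, applied to the disturbance-free hybrid system $x\mapsto f(x,0)$, $x\mapsto g(x,0)$, which inherits the Hybrid Basic Conditions; this is available from \cite{Cai.2007}. It produces a function $U\colon\X\to\Rp$, smooth and positive definite with respect to $\A$, proper \emph{relative to the basin of pre-attraction} (which is what will make the final function merely semi-proper rather than proper), with sandwich bounds $\underline\alpha_0(\omega(\xi))\le U(\xi)\le\overline\alpha_0(\omega(\xi))$ and, after a standard reparametrization, the exponential-type decrease $\langle\nabla U(\xi),f(\xi,0)\rangle\le-U(\xi)$ on $\Pi_0(\C)$ and $U(g(\xi,0))\le e^{-1}U(\xi)$ on $\Pi_0(\D)$.

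The next step is to incorporate the input through a semi-proper reshaping $W:=\pi\circ U$, with $\pi\in\K$ smooth and $\pi'$ positive and nonincreasing, to be chosen. For $(\xi,u)\in\C$ I split
\[
\langle\nabla W(\xi),f(\xi,u)\rangle=\pi'(U(\xi))\Big[\langle\nabla U(\xi),f(\xi,0)\rangle+\langle\nabla U(\xi),f(\xi,u)-f(\xi,0)\rangle\Big],
\]
and majorize the second inner term by $\pi'(U(\xi))\,\abs{\nabla U(\xi)}\,\abs{f(\xi,u)-f(\xi,0)}$. Using continuity of $f$ and $f(\xi,0)-f(\xi,0)=0$, I bound $\abs{f(\xi,u)-f(\xi,0)}\le\kappa(\omega(\xi))\,\lambda_0(\abs{u})$ on the relevant sublevel sets for a continuous $\kappa$ and $\lambda_0\in\K$. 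Setting $M(s):=\sup\{\abs{\nabla U(\xi)}\kappa(\omega(\xi)):U(\xi)\le s\}$ (finite on compact sublevel sets by smoothness and properness) and choosing $\pi'(s)\le1/\max\{1,M(s)\}$ gives $\pi'(U(\xi))\abs{\nabla U(\xi)}\kappa(\omega(\xi))\le1$, hence $\langle\nabla W(\xi),f(\xi,u)\rangle\le-\pi'(U(\xi))U(\xi)+\lambda_0(\abs{u})$. Since $\pi'$ is nonincreasing, $\pi'(U(\xi))U(\xi)\ge\pi'(\overline\alpha_0(\omega(\xi)))\,\underline\alpha_0(\omega(\xi))=:\rho(\omega(\xi))\in\mathcal{PD}$, yielding \eqref{eq:e17} with $\lambda:=\lambda_0$. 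The jump estimate \eqref{eq:e18} follows analogously: from $U(g(\xi,u))\le e^{-1}U(\xi)+\kappa_g(\omega(\xi))\lambda_g(\abs{u})$ and the mean value theorem applied to $\pi$, together with the boundedness of $\pi'$, the $u$-contribution again separates into an additive $\lambda(\abs{u})$ term after taking a common $\rho$ and $\lambda$ with the flow case.

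\emph{Main obstacle.} The crux is constructing the single reshaping $\pi$ that simultaneously (i) flattens the gradient so that $\pi'(U)\abs{\nabla U}\kappa$ is bounded, (ii) keeps the residual decrease $\pi'(U)U$ positive definite in $\omega$, and (iii) serves both the flow and the jump inequalities with one $\rho$ and one $\lambda$, all while preserving smoothness and semi-properness of $W$. This mirrors \cite[Proposition II.5]{Angeli.2000}, but the jump map and the basin-restricted (hence semi-proper rather than proper) nature of 0-input pre-AS require the reshaping and the splitting estimates to be carried out with care.
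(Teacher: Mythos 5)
Your sufficiency direction is fine (the paper dismisses it as clear, and your argument is the standard one). The necessity direction, however, has a genuine gap at its very first step. You split $\langle\nabla W(\xi),f(\xi,u)\rangle$ around the $0$-input vector field, which requires both that $f(\xi,0)$ is defined and that the converse-Lyapunov decrease $\langle\nabla U(\xi),f(\xi,0)\rangle\le -U(\xi)$ holds at the particular point $\xi$ under consideration. Neither is guaranteed: $f$ is defined only on $\C$, and $(\xi,u)\in\C$ does \emph{not} imply $(\xi,0)\in\C$, so $f(\xi,0)$ may not exist; moreover the converse theorem for the disturbance-free system yields decrease only on $\Pi_0(\C)$ (the assumption $\Pi_0(\C)\cup\Pi_0(\D)=\X$ only says $\xi$ lies in one of the two $0$-input sets, not the one you need). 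The same objection applies to $g(\xi,0)$ in your jump estimate. This is precisely the difficulty the paper's architecture is built to avoid: its Lemma~\ref{L:5} never compares $f(\xi,u)$ with $f(\xi,0)$ pointwise, but instead forms the continuous function $\delta(s,r)$ (a maximum of the flow/jump decrease expressions over the constraint sets) and invokes Sontag's small-input-margin result \cite[Lemma 3.1]{Sontag.1990}, which propagates negativity from $r=0$ to inputs $\abs{u}\le q(\omega(\xi))\abs{\nu}$ by continuity and compactness alone; it then passes to gain form via \cite[Remark 2.4]{Sontag.1995}, factorizes $\alpha_4(sr)\le\lambda(s)\lambda(r)$ via \cite[Corollary IV.5]{Angeli.2000}, and absorbs the resulting state-dependent gain $\lambda(1/q(\omega(\xi)))$ with the reshaping $\pi(r)=\int_0^r \mathrm{d}s/(c+\theta(s))$.

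Two further problems remain even where your splitting makes sense. First, the bound $\abs{f(\xi,u)-f(\xi,0)}\le\kappa(\omega(\xi))\lambda_0(\abs{u})$ is not a one-line consequence of continuity; it is a two-variable domination/factorization lemma (a cousin of \cite[Corollary IV.5]{Angeli.2000}) that needs a diagonal construction over an exhaustion by compact sublevel sets, and you merely assert it. Second, and more seriously, your jump case fails as written: there is no chain rule across jumps, and the mean value theorem evaluates $\pi'$ at an intermediate point that can be arbitrarily close to zero (for instance when $g$ resets the state onto $\A$, as in the paper's reset-controller example), where $\pi'\approx 1$; then the state-dependent factor $\kappa_g(\omega(\xi))$ is \emph{not} suppressed and the disturbance contribution is not of the required form $\lambda(\abs{u})$. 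The repair is to use the concavity gradient inequality anchored at the pre-jump value, $\pi(a)-\pi(b)\le\pi'(b)(a-b)$ with $b=U(\xi)$, so that the same factor $\pi'(U(\xi))$ that you tuned against $\kappa$ multiplies the jump gain as well --- this anchoring is exactly what the paper's reshaping argument does for~\eqref{eq:e18}. As proposed, your construction only delivers~\eqref{eq:e17}, not~\eqref{eq:e18}.
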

\begin{proof}
Sufficiency is clear. We establish necessity. To this end, the following lemma is needed.
\begin{lemma} \label{L:5}
$\H$ is 0-input pAS if and only if there exist a smooth Lyapunov function $V \colon \X \to \Rp$ and $\alpha_1,\alpha_2,\alpha_3,\chi \in \Kinf$ and a nonzero smooth function $q \colon \Rp \to \Rsp$ with the property that $q(s) \equiv 1$ for all $s \in [0,1]$ such that
\begin{align}
\alpha_1 ( \omega (\xi) ) \leq V(\xi) &\leq \!\alpha_2 ( \omega (\xi) ) \;\quad\; \forall \,\, \xi \in {\X} , & \label{eq:e19} \\
\left\langle \nabla V(\xi),f(\xi,q(\omega(\xi))I\nu) \right\rangle &\leq \!\! - \alpha_3 \left( \omega ( \xi ) \right) \!\quad \forall  (\xi ,q(\omega(\xi))I\nu) \!\in\! \C \!\textrm{ with } \!\omega (\xi) \!> \!\chi (\abs{\nu}) , & \label{eq:e20} \\
V(g(\xi,q(\omega(\xi))I\nu)) - V(\xi) &\leq \!\! - \alpha_3 \left(\omega (\xi) \right) \!\quad \forall  (\xi,q(\omega(\xi))I\nu) \!\in\! \D \!\textrm{ with }\! \omega (\xi) \!>\! \chi (\abs{\nu}) . & \label{eq:e33}
\end{align}
where $I$ is the $m \times m$ identity matrix.
\end{lemma}
\begin{proof}
See Appendix~\ref{Ap:G}.
\end{proof}
Now we can pursue the proof of Proposition~\ref{P:2}. Let $\mathcal{H}$ be 0-input pre-AS. Recalling Lemma~\ref{L:5}, there exists a Lyapunov function $V$ with the properties (\ref{eq:e19})-(\ref{eq:e33}). Using \cite[Remark 2.4]{Sontag.1995}, we can show that there exists  some $\alpha_{4} \in \Kinf$ such that (\ref{eq:e20}) and (\ref{eq:e33}) are equivalent to
\begin{eqnarray}
 \left \langle \nabla V(\xi),f(\xi,q(\omega(\xi))I\nu) \right\rangle &\leq& - \alpha_3 \left( \omega (\xi) \right) + \alpha_{4} \left( \abs{\nu} \right) \qquad \forall (\xi,q(\omega(\xi))I\nu) \in \mathcal{C} , \label{eq:e22} \\
V(g(\xi,q(\omega(\xi))I\nu)) - V(\xi) &\leq&  - \alpha_3 \left( \omega (\xi) \right) + \alpha_4 \left( \abs{\nu} \right) \qquad \forall (\xi,q(\omega(\xi))I\nu) \in \mathcal{D} .  \label{eq:e30}
\end{eqnarray}
Given \cite[Corollary IV.5]{Angeli.2000}, there exists $\lambda \in \mathcal{K}$ such that $\alpha_4 (sr) \leq \lambda(s) \lambda(r)$ for all $(s,r) \in \Rp \times \Rp$. So we have
\begin{eqnarray*}
\left \langle \nabla V(\xi),f(\xi,u) \right\rangle &\leq& - \alpha_{3} (\omega (\xi)) + \lambda (1/q(\omega(\xi))) \lambda (\abs{u}) \qquad \; \forall (\xi,u) \in \C , \\
V(g(\xi,u)) - V(\xi) &\leq&  - \alpha_3 (\omega (\xi)) + \lambda (1/q(\omega(\xi))) \lambda (\abs{u}) \;\qquad \forall (\xi,u) \in \D  
\end{eqnarray*}
where $u := q(\omega(\xi))I\nu$. Define $\pi \colon \Rp \to \Rp$ as
\begin{equation*}
\pi ( r ) = \int_0^r {\frac{ds} {c + \theta (s)}}
\end{equation*}
where $c > 0$ and $\theta \in \mathcal{K}$ are defined below. We note that $\pi \in \K$. Let $W(r) := \pi (V(r))$ for all $r \geq 0$. Taking the time derivative and difference of $W(\xi)$ and recalling~(\ref{eq:e22}) and~(\ref{eq:e30}) yield
\begin{align*}
\left \langle {\nabla W(\xi ),f(\xi,u)} \right\rangle & \leq \frac{\left\langle {\nabla V(\xi ),f(\xi,u)} \right\rangle } {c + \theta ( V(\xi))} \nonumber \\
& \leq  - \frac{\alpha_3 ( \omega (\xi) )} {c + \theta  ( V(\xi))} + \frac{\lambda ( 1/q ( \omega (\xi) )) \lambda (\abs{u})} {c + \theta ( V(\xi))} \qquad \forall (\xi,u) \in \mathcal{C} , \\
W( g(\xi,u)) - W(\xi) & \leq \frac{V(g(\xi,u)) - V(\xi)} {c + \theta ( V(\xi))} \nonumber \\
& \leq - \frac{\alpha_3 (\omega (\xi))} {c + \theta ( V(\xi))} + \frac{\lambda ( 1/q ( \omega (\xi) ) ) \lambda (\abs{u})} {c + \theta ( V(\xi) )} \qquad \forall (\xi,u) \in \mathcal{D} .
\end{align*}
It follows from~(\ref{eq:e19}) that 
\begin{eqnarray*}
\left \langle {\nabla W(\xi),f(\xi,u)} \right\rangle &\leq& - \frac{\alpha_3 (\omega(\xi))} {c + \theta  \circ \alpha_2 (\omega(\xi))} + \frac{\lambda ( 1/q ( \omega (\xi)) ) \lambda (\abs{u})} {c + \theta \circ \alpha_1(\omega(\xi))} \qquad \forall (\xi,u) \in \C , \\
W( g(\xi,u)) - W(\xi) &\leq& - \frac{\alpha_{3} (\omega (\xi))} {c + \theta \circ \alpha_2 (\omega(\xi))} + \frac{\lambda ( 1/q ( \omega (\xi))) \lambda (\abs{u})} {c + \theta \circ \alpha_1 (\omega(\xi))} \qquad \forall (\xi,u) \in \D .
\end{eqnarray*}
Let $c := \lambda(g(0)) = \lambda(1)$. By the fact that $q$ is smooth everywhere and the definition of $c$, one can construct $\theta\in \mathcal{K}$ such that
\begin{align} \label{eq:e99}
c + \theta \circ \alpha_1 (s) \geq \lambda ( 1/q (s)) \qquad\qquad\qquad s \in \mathbb{R}_{\geq 0} .
\end{align}
It follows with (\ref{eq:e99}) that
\begin{eqnarray*}
\left \langle \nabla W(\xi),f(\xi,u) \right\rangle &\leq& - \rho (\omega(\xi)) + \lambda (\abs{u}) \qquad\qquad \forall (\xi,u) \in \C , \\
W( g(\xi,u)) - W(\xi) &\leq& - \rho (\omega (\xi)) + \lambda (\abs{u}) \qquad\qquad \forall (\xi,u) \in \D .
\end{eqnarray*}
where $\rho (s) := \frac{\alpha_3 (s)} {c + \theta \circ \alpha_2 (s)}$ for all $s \geq 0$. This proves the necessity.
\end{proof}
As pre-iISS implies 0-input pre-AS, it follows from Proposition~\ref{P:2} that there exist a smooth semi-proper function $W$, $\lambda \in \K$ and $\rho \in \mathcal{PD}$ such that (\ref{eq:e17}) and (\ref{eq:e18}) hold. Define $V \colon \X \to \Rp$ by $V (\xi) := W (\xi) + V_1 (\xi)$ with $V_1$ coming from Lemma~\ref{L:14}. It follows from Lemma~\ref{L:14} and Proposition~\ref{P:2} that $V$ is smooth everywhere and there exist $\alpha_1,\alpha_2 \in \Kinf$ such that
\begin{align}
\alpha_1 ( \omega (\xi) ) & \leq V (\xi) \leq \alpha_2 ( \omega (\xi) ) \qquad \qquad \forall \xi \in \X . & \label{eq:e48}
\end{align}
We also have for any $(\xi,u) \in \C$ with $\abs{u} \leq \varphi (\omega(\xi))$
\begin{align*}
\langle \nabla V (\xi),f (\xi,u) \rangle & \leq - \rho (\omega(\xi)) + \eta (\abs{u}), &
\end{align*}
and for any $(\xi,u) \in \D$ with $\abs{u} \leq \varphi (\omega(\xi))$
\begin{align*}
V (g(\xi,u)) - V (\xi) & \leq - \rho (\omega(\xi)) + \eta (\abs{u}) &
\end{align*}
where $\eta (\cdot) := \tilde{\gamma} (\cdot) + \lambda (\cdot)$ and $\tilde{\gamma} (\cdot) := \max \{ \tilde{\gamma}_1 (\cdot) , \tilde{\gamma}_2 (\cdot) \}$. To show that $V$ satisfies~\eqref{eq:e3} and~\eqref{eq:e4}, let $\chi = \varphi^{-1}$ and define
\begin{align*}
\hat{\kappa} (r) :=\!\! \max_{ \!\omega(\xi) \leq \chi(\abs{u}), \abs{u} \leq r, u \in \mathcal{U}} & \big\{ \left\langle {\nabla V(\xi) , f(\xi,u)} \right\rangle + \rho ( \omega (\xi) ), V (g(x,u)) - V (\xi) + \rho (\omega(\xi)) \!\big\} . & 
\end{align*}
Then
$$
\kappa (r) := \max \{ \hat{\kappa} (r) , \eta (r) \} .
$$
It is obvious that $\kappa \in \mathcal{K}$. By considering two cases of $u \in \mathcal{U}$ in which $\abs{u} \leq \varphi(\omega(\xi))$ and $\abs{u} \geq \varphi(\omega(\xi))$, we get
\begin{align*}
\left \langle {\nabla V(\xi),f(\xi,u)} \right\rangle & \leq - \rho (\omega(\xi)) + \kappa ( \abs{u} ) \quad \forall (\xi,u) \in \mathcal{C} , & \\
V (g(\xi,u)) - V (\xi) & \leq - \rho (\omega(\xi)) + \kappa ( \abs{u} ) \quad \forall (\xi,u) \in \mathcal{D} . &
\end{align*}
These estimates together with (\ref{eq:e48}) show that $V$ is a smooth iISS Lyapunov function for $\H$.

\section{iISS for Sampled-Data Systems}\label{sec:examples}
A popular approach to design sampled-data systems is the emulation approach. The idea is to first ignore communication constraints and design a continuous-time controller for a continuous-time plant. Then to provide certain conditions under which stability of the sampled-data control system in a certain sense is preserved in a digital implementation. The emulation approach enjoys considerable advantages in terms of the choice of continuous-time design tools. A central issue in the emulation design is the choice of the sampling period guaranteeing stability of the sampled-data system with the emulated controller. In a seminal work, Ne\v{s}i\'{c} et al. \cite{Nesic.2009} developed an explicit formula for a maximum allowable sampling period (MASP) that ensures asymptotic stability of sampled-data nonlinear systems with emulated controllers.

Here we show the effectiveness of Theorem~\ref{T:1} by establishing that the MASP developed in \cite{Nesic.2009} also guarantees iISS for a sampled-data control system. Consider the following plant model
\begin{align} \label{eq:e01s}
&\begin{array}{rcl}
\dot{x}_p &=& f_p (x_p,u,w) \\
y &=& g_p (x_p)
\end{array}
\end{align}
where $x_p \in \Rn[n_p]$ is the plant state, $u \in \Rn[n_u]$ is the control input, $w \in \Rn[n_w]$ is the disturbance input and $y \in \Rn[n_y]$ is the plant output. Assume that $f_p \colon \R^{n_p} \times \R^{n_u} \times \R^{n_w} \to \R^{n_p}$ is locally Lipschitz and $f_p (0,0) = 0$. Since we follow the emulation method, we assume that we know a continuous-time controller, which stabilizes the origin of system \eqref{eq:e01s} in the sense of iISS in the absence of network. We focus on dynamic controllers of the form
\begin{align} \label{eq:e02s}
\begin{array}{rcl}
\dot{x}_c & = &  f_c (x_c,y) \\
u & = & g_c (x_c)
\end{array} &
\end{align}
where $x_c \in \R^{n_c}$ is the controller state. Let $g_c \colon \Rn[n_c] \to \Rn[n_u]$ be continuously differentiable in its argument.

We consider the scenario where the plant and the controller are connected via a digital channel. In particular, we assume that the plant is between a hold device and a sampler. Transmissions occur only at some given time instants $t_j, j \in \Zsp$, such that $\epsilon \leq t_j-t_{j-1} \leq \tau_\mathrm{MASP}$, where $\epsilon\in(0,\tau_\mathrm{MASP}]$  represents the minimum time between any two transmission instants. Note that $\epsilon$ can be taken arbitrarily small and it is only used to prevent Zeno behavior \cite{Goebel.2012}. As in~\cite{Nesic.2009}, a sampled-data control system with an emulated controller of the form~\eqref{eq:e02s} can be modeled by
\begin{align} \label{eq:e09s}
&\begin{array}{rcll}
 \dot{x}_p &=& f_p (x_p,\hat{u},w) & t \in [t_{j-1},t_j] \\
 y &=& g_p (x_p) \\
 \dot{x}_c &=& f_c (x_c,\hat{y}) & t \in [t_{j-1},t_j] \\
 u &=& g_c (x_c,\hat{y}) \\
 \dot{\hat{y}} &=& \hat{f}_p (x_p,x_c,\hat y, \hat u) & t \in [t_{j-1},t_j] \\
 \dot{\hat{u}} &=& \hat{f}_c (x_p,x_c,\hat y, \hat u) & t \in [t_{j-1},t_j] \\
 \hat{y} (t^+_j) &=& y (t_j)  \\
 \hat{u} (t^+_j) &=& u (t_j)
\end{array}
\end{align}
where $\hat{y} \in \R^{n_y}$ and $\hat{u} \in \R^{n_u}$ are, respectively, the vectors of most recently transmitted plant and controller output values. These two variables are generated by the holding function $\hat f_p$ and $\hat f_c$ between two successive transmission instants. The use of zero-order-hold devices leads to $\hat f_p = 0$ and $\hat f_c = 0$ for instance. In addition, $e := (e_y,e_u) \in \mathbb{R}^{n_e}$ denotes the sampling-induced errors where $e_y := \hat{y} - y\in\Rn[n_y]$ and $e_u := \hat{u} - u\in\Rn[n_u]$. Given $x := (x_p,x_c) \in \R^{n_x}$, it is more convenient to transform \eqref{eq:e09s} into a hybrid system as
\begin{align}
& \left.
\begin{array}{rcl}
\dot{x} &=& f (x,e,w) \\
\dot{e} &=& g (x,e,w) \\
\dot\tau &=& 1 
\end{array} \right\}
\tau \in [0,\tau_{\mathrm{MASP}}] \label{eq:e15s} \\
& \!\!\!\!\left. \begin{array}{rcl}
x^+ &=& x \\
e^+ &=& 0 \\
\tau^+ &=& 0
\end{array} \right\}
\tau \in [\epsilon,\tau_{\mathrm{MASP}}] \label{eq:e16s}
\end{align}
where $\tau \in \Rp$ represents a clock and $w$ denotes the disturbance input. We also have the flow set $\C := \{ (x,e,\tau,w) \colon \tau \in [0,\tau_{\mathrm{MASP}}]\}$ and the jump set $\D := \{ (x,e,\tau,w) \colon \tau \in [\epsilon,\tau_{\mathrm{MASP}}]\}$.

To present our results, we need to make the following assumption.
\begin{assumption} \label{A:01s}
There exist locally Lipschitz functions $V \colon \Rn[n_x] \to \Rp$, $W \colon \mathbb{R}^{n_e} \to \Rp$, a continuous function $H \colon \Rn[n_x] \to \Rp$, $\underline\alpha_x,\overline\alpha_x,\underline\alpha_e,\overline\alpha_e \in \Kinf$, $\tilde\alpha \in \mathcal{PD}$, $\sigma_1,\sigma_2 \in \K$ and real numbers $L,\gamma > 0$ such that the following hold
\begin{align}
& \underline\alpha_x (\abs{x}) \leq V (x) \leq \overline\alpha_x (\abs{x}) \qquad \forall x \in \mathbb{R}^{n_x} , & \label{eq:e17s} 
\end{align}
for all almost $x \in \Rn[n_x]$, for all $e \in \Rn[n_e]$ and all $w \in \Rn[d]$
\begin{align}
& \langle \nabla V (x) , f(x,e,w) \rangle \leq - \tilde\alpha (\abs{x}) - \tilde\alpha (W(e)) - [H(x)]^2 + \gamma^2 [W(e)]^2 + \sigma_1 (\abs{w}) \label{eq:e18s}
\end{align}
moreover,
\begin{align}
& \underline\alpha_e (\abs{e}) \leq W (e) \leq \overline\alpha_e (\abs{e}) \qquad \forall e \in \R^{n_e} & \label{eq:e19s}
\end{align}
and for almost all $e \in \Rn[n_e]$, for all $x \in \R^{n_x}$ and all $w \in \mathbb{R}^d$
\begin{align}
& \left\langle \frac{\partial W (e)}{\partial e} , g(x,e,w) \right\rangle \leq L W (e) + H(x) + \sigma_2 (\abs{w}) . & \label{eq:e21s}
\end{align}
\hfill$\Box$
\end{assumption}
According to \eqref{eq:e17s} and \eqref{eq:e18s}, the emulated controller guarantees the iISS property for subsystem $\dot x = f(x,e,w)$ with $W$ and $w$ as inputs. These properties can be verified by analysis of robustness of the closed-loop system \eqref{eq:e01s}-\eqref{eq:e02s} with respect to input and/or output measurement errors in the absence of digital network. Finally, sufficient conditions under which \eqref{eq:e21s} holds are the function $g$ is globally Lipschitz and there exists $M > 0$ such that $\abs{\frac{\partial W(\kappa,e)}{\partial e}} \leq M$.

The last condition is on the MASP. As in \cite{Nesic.2009}, we need to have a system which has a sufficiently high bandwidth so that the following assumption holds.
\begin{assumption} \label{A:02s}
Let $\tau_{\mathrm{MASP}}$ satisfies $\tau_{\mathrm{MASP}} < \mathcal{T}(\gamma,L)$ where
\begin{align} \label{eq:e22s}
\mathcal{T}(\gamma,L) := \left\{ \begin{array}{ll}
\frac{1}{L r}\tan^{-1}(r) & \gamma>L \\
\frac{1}{L} & L = \gamma \\
\frac{1}{L r}\tanh^{-1}(r) & \gamma<L
\end{array}\right. 
\end{align}
with $r := \sqrt{\abs{(\gamma / L)^2-1}}$. 
\hfill$\Box$
\end{assumption}
Now we are ready to give the main result of this section.
\begin{theorem} \label{T:2}
Let Assumptions~\ref{A:01s} and \ref{A:02s} hold. Then hybrid system~(\ref{eq:e15s}) and~(\ref{eq:e16s}) is iISS with respect to the compact set $\mathcal{A} := \{ (x,e,\tau) \colon x = 0 , e = 0 \}$.
\end{theorem}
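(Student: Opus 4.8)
The plan is to prove iISS by exhibiting the data required by the dissipativity-plus-detectability characterization, item~(\ref{item:0-detect}) of Theorem~\ref{T:1}, and then invoking the implication $(\ref{item:0-detect})\Rightarrow(\ref{item:iiss})$. The candidate storage function is the clock-dependent function
\[
U(x,e,\tau) := V(x) + \gamma\,\phi(\tau)\,[W(e)]^2 ,
\]
where $\phi$ is the solution of the Riccati-type comparison equation $\dot\phi = -2L\phi - \tilde\gamma(\phi^2+1)$ run on $[0,\tau_{\mathrm{MASP}}]$. Here I would deliberately pick $\tilde\gamma>\gamma$ slightly larger than the gain in Assumption~\ref{A:01s}; since $\tau_{\mathrm{MASP}}<\mathcal{T}(\gamma,L)$ strictly and $\mathcal{T}(\cdot,L)$ is continuous and decreasing in its first argument, one can still guarantee $\tau_{\mathrm{MASP}}<\mathcal{T}(\tilde\gamma,L)$, and the standard Ne\v{s}i\'c--Teel--Carnevale argument then yields a solution $\phi$ with $\lambda \le \phi(\tau) \le \lambda^{-1}$ for all $\tau\in[0,\tau_{\mathrm{MASP}}]$ and some $\lambda\in(0,1)$. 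Because $\phi$ is bounded away from $0$ and from above, and using~\eqref{eq:e17s} and~\eqref{eq:e19s} together with the fact that $\tau$ ranges over the compact interval $[0,\tau_{\mathrm{MASP}}]$ while $\A$ does not constrain $\tau$, the two-sided bound $\alpha_4(\omega(\cdot))\le U\le\alpha_5(\omega(\cdot))$ of~\eqref{eq:e03} follows for suitable $\alpha_4,\alpha_5\in\Kinf$.

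Next I would carry out the flow estimate. Differentiating $U$ along~\eqref{eq:e15s} and inserting~\eqref{eq:e18s} and~\eqref{eq:e21s} gives, writing $H=H(x)$ and $W=W(e)$,
\begin{align*}
\dot U \le{}& -\tilde\alpha(\abs{x}) - \tilde\alpha(W) - H^2 + \gamma^2 W^2 + \sigma_1(\abs{w}) \\
&+ \gamma\dot\phi\,W^2 + 2\gamma\phi W\big(LW + H + \sigma_2(\abs{w})\big).
\end{align*}
Applying Young's inequality to the indefinite cross term $2\gamma\phi W H \le \gamma^2\phi^2 W^2 + H^2$ cancels the $\pm H^2$ contributions, and collecting the quadratic-in-$W$ terms leaves the coefficient $\gamma[\dot\phi + 2L\phi + \gamma(\phi^2+1)]$. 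By the choice of $\phi$ this equals $-\gamma(\tilde\gamma-\gamma)(\phi^2+1)\le -\delta$ with $\delta:=\gamma(\tilde\gamma-\gamma)>0$, so a genuine negative term $-\delta W^2$ survives. This margin is exactly what is needed to dispose of the remaining \emph{state--input} cross term: completing the square, $-\delta W^2 + 2\gamma\phi W\sigma_2(\abs{w}) \le \tfrac{\gamma^2\phi_{\max}^2}{\delta}[\sigma_2(\abs{w})]^2$ with $\phi_{\max}:=\lambda^{-1}$. Hence
\[
\dot U \le -\tilde\alpha(\abs{x}) - \tilde\alpha(W(e)) + \sigma(\abs{w}),\qquad \sigma(r):=\sigma_1(r)+\tfrac{\gamma^2\phi_{\max}^2}{\delta}[\sigma_2(r)]^2\in\K .
\]

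For the jumps~\eqref{eq:e16s}, since $x^+=x$, $e^+=0$, $\tau^+=0$ and $W(0)=0$, one computes $U(x,0,0)-U(x,e,\tau) = -\gamma\phi(\tau)[W(e)]^2 \le -\gamma\lambda[W(e)]^2 \le 0$, which dissipates in $e$ but not in $x$. This is precisely why a single iISS-Lyapunov function in the sense of Definition~\ref{D:2} (item~(\ref{item:lyapunov-iiss})) will not do, and why I route the argument through item~(\ref{item:0-detect}). I would take the common dissipation rate $\rho(x,e,\tau):=\bar\rho(W(e))$ with $\bar\rho\in\mathcal{PD}$ chosen so small that $\bar\rho(s)\le\min\{\tilde\alpha(s),\gamma\lambda s^2\}$; then the flow inequality (dropping $-\tilde\alpha(\abs{x})\le0$) and the jump inequality both read $\le -\rho(\cdot)+\sigma(\abs{w})$, so $U$ is a storage function in the sense of Definition~\ref{D:6}, and $\{\rho=0\}=\{(x,e,\tau):e=0\}=:K$. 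It remains to verify that the distance to $\A$ is 0-input detectable relative to $K$: a complete solution with $w\equiv0$ confined to $K$ has $e\equiv0$, so along flows~\eqref{eq:e18s} collapses to $\langle\nabla V,f(x,0,0)\rangle\le-\tilde\alpha(\abs{x})-[H(x)]^2\le-\tilde\alpha(\abs{x})$ while jumps leave $V$ unchanged; since the clock forces at least $\epsilon$ units of flow between consecutive jumps, the total flow time of a complete solution is unbounded, and a Barbalat/invariance argument yields $x\to0$, hence $\omega\to0$. Theorem~\ref{T:1}, $(\ref{item:0-detect})\Rightarrow(\ref{item:iiss})$, then gives pre-iISS; completeness of maximal solutions---guaranteed because the iISS bound precludes finite escape and the flow/jump sets always permit continuation---upgrades this to iISS.

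The main obstacle is the state--input cross term $2\gamma\phi W(e)\sigma_2(\abs{w})$: unlike ISS, it cannot be split additively into a negative-definite state term plus a $\K$-function of the input using the bare $-\tilde\alpha(W(e))$ dissipation, because $\tilde\alpha$ is merely positive definite and may saturate. The device that resolves it is the strict margin $-\delta W^2$, which I manufacture by solving the $\phi$-equation with an inflated gain $\tilde\gamma>\gamma$; the strictness in Assumption~\ref{A:02s} is what makes this inflation admissible. A secondary, purely technical, point is that $U$ inherits only local Lipschitzness from $V,W$, so the inequalities hold for almost every $x,e$; this is handled either by reading Definition~\ref{D:6} in the Clarke sense or by mollifying $U$ into a genuinely smooth storage function with marginally relaxed---but still valid---estimates, neither of which affects the structure above.
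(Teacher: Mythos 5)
Your proposal is correct and its computational core is exactly the paper's: the same storage function $U=V(x)+\gamma\phi(\tau)[W(e)]^2$, the same trick of inflating the Riccati comparison equation to buy a strict quadratic margin $-\delta [W(e)]^2$ (you inflate the gain, $\dot\phi=-2L\phi-\tilde\gamma(\phi^2+1)$ with $\tilde\gamma>\gamma$; the paper inflates the constant, $\dot\phi=-2L\phi-\gamma(\phi^2+c)$ with $c>1$, via Lemmas~\ref{L:03} and~\ref{L:02} --- the two are interchangeable), and the same Young-type absorption of the cross term $2\gamma\phi W(e)\sigma_2(\abs{w})$ into a $\K$-function of $\abs{w}$. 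The genuine difference is the endgame. The paper discards all negative terms, keeping only $\langle\nabla U,F\rangle\le\sigma(\abs{w})$ on flows and $U^+-U\le 0$ at jumps (smooth dissipativity with $\rho\equiv0$), and imports 0-input asymptotic stability of the closed loop from \cite{Nesic.2009} so as to invoke the implication $(\ref{item:0-gas})\Rightarrow(\ref{item:iiss})$ of Theorem~\ref{T:1}. You instead retain a nontrivial rate $\rho(x,e,\tau)=\bar\rho(W(e))$ --- legitimate, since the jump estimate gives $U^+-U\le-\gamma\lambda[W(e)]^2$ and the flow estimate dominates $-\tilde\alpha(W(e))$ --- and prove 0-input detectability relative to $\{\rho=0\}=\{e=0\}$ yourself: the clock forces at least $\epsilon$ of flow between consecutive jumps, so complete solutions have unbounded flow time, and monotonicity of $V$ together with $\dot V\le-\tilde\alpha(\abs{x})$ on $\{e=0,\,w=0\}$ forces $V\to0$, hence $x\to0$; this lets you invoke $(\ref{item:0-detect})\Rightarrow(\ref{item:iiss})$ instead. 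What each buys: your route is self-contained, needing no external stability result; the paper's is shorter, outsourcing the stability analysis to the cited reference as a black box. Both arguments share the technical caveat that $U$ is only locally Lipschitz while Definition~\ref{D:6} formally requires a smooth storage function --- you flag and patch this (Clarke derivatives or mollification), whereas the paper passes over it silently --- and your observation that no single function can satisfy Definition~\ref{D:2} here (jumps do not dissipate in $x$) correctly identifies why a detour through item~$(\ref{item:0-detect})$ or~$(\ref{item:0-gas})$ is unavoidable.
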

\begin{proof}
To prove the theorem, we appeal to Theorem~\ref{T:1}. In particular, we establish hybrid system~\eqref{eq:e15s} and~\eqref{eq:e16s} is smoothly dissipative. On the other hand, hybrid system~\eqref{eq:e15s} and~\eqref{eq:e16s} is also 0-input AS under Assumptions~\ref{A:01s} and \ref{A:02s}, as shown in \cite{Nesic.2009}. Hence, by the implication $(\ref{item:0-gas}) \Rightarrow (\ref{item:iiss})$ of Theorem~\ref{T:1}, (\ref{eq:e23s}) is iISS. Toward the dissipative property of~\eqref{eq:e15s} and~\eqref{eq:e16s}, the following two lemmas are required to give the proof.
\begin{lemma}\label{L:03}
Given $c > 1$ and $\lambda \in (0,1)$, define
\begin{align*} 
\tilde{\mathcal{T}} (c,\lambda,L,\gamma) := \left\{ \begin{array}{ll}
\frac{1}{L r}\tan^{-1}(\frac{r(1-\lambda)}{2(\frac{\lambda}{\lambda+1}) (\frac{\gamma}{L}(\frac{c+1}{2})-1) +1+\lambda}) & L < \gamma\sqrt{c} \\
\frac{1}{L} ( \frac{1-\lambda^2}{\lambda^2 + \frac{\gamma}{L}(1+c)\lambda+1}) & L = \gamma \sqrt{c} \\
\frac{1}{L r}\tanh^{-1}(\frac{r(1-\lambda)}{2(\frac{\lambda}{\lambda+1}) (\frac{\gamma}{L}(\frac{c+1}{2})-1) +1+\lambda}) & L > \gamma \sqrt{c}
\end{array}\right. 
\end{align*}
where $r := \sqrt{\abs{(\gamma / L)^2-c}}$. Let $\phi \colon [0,\tilde{\mathcal{T}}] \to \mathbb{R}$ be the solution to 
\begin{align} \label{eq:e25s}
\dot \phi = -2 L \phi - \gamma (\phi^2+c) \qquad \phi(0) = \lambda^{-1}. 
\end{align}
Then $\phi (\tau) \in [\lambda,\lambda^{-1}]$ for all $\tau \in [0,\tilde{\mathcal{T}}]$.
\end{lemma}
\begin{lemma} \label{L:02}
For any fixed $\gamma$ and $L$, $\tilde{\mathcal{T}}(\cdot,\cdot,\gamma,L) : (1,+\infty) \times (0,1) \to \Rsp$ is continuous and strictly decreasing to zero with respect to the first two arguments.
\end{lemma}
Let $\tau_{\mathrm{MASP}} < \mathcal{T} (\gamma,L)$ be given. For the sake of convenience, denote $\xi := [x^\top,e^\top,\tau]^\top$, $F(\xi,w) := [f(x,e,w)^\top,g(x,e,w)^\top,1]^\top$ and $G(\xi,w) := [x^\top,0^\top,0]^\top$. Also, rewrite hybrid system~\eqref{eq:e15s} and~\eqref{eq:e16s} as
\begin{align} \label{eq:e23s}
& \mathcal{H} := \left\{ \begin{array}{l}
 \dot{\xi} = F (\xi,w) \;\; \quad (\xi,w) \in \mathcal{C} \\
 \xi^{+} = G (\xi,w) \quad (\xi,w) \in \mathcal{D}
 \end{array} \right. . &
\end{align}
It follows from Lemma~\ref{L:02} that there exist $c > 1$ and $\lambda \in (0,1)$ such that $\tau_{\mathrm{MASP}} = \tilde{\mathcal{T}} (c,\lambda,\gamma,L)$. Let the quadruple $(c,\lambda,\gamma,L)$ generate $\phi$ via Lemma~\ref{L:03}. Also, let
\begin{align*}
U (\xi) := V(x) + \gamma \phi(\tau) [W(e)]^2 .
\end{align*}
By (\ref{eq:e17s}), (\ref{eq:e19s}) and the fact that $\phi(\tau) \in [\lambda,\lambda^{-1}]$ for all $\tau \in [0,\tau_{\mathrm{MASP}}]$ (cf. Lemma~\ref{L:03}), there exist $\underline\alpha,\overline\alpha \in \mathcal{K}_\infty$ such that the following holds
\begin{align} \label{eq:e24s}
& \underline\alpha (\abs{[x,e]}) \leq  U (\xi) \leq \overline\alpha (\abs{[x,e]}) . &
\end{align}
For any $(\xi,w) \in \mathcal{C}$, we have
\begin{align*}
& \left\langle \nabla U (\xi),F(\xi,w)\right\rangle = \left\langle V(x), f(x,e,w) \right\rangle + 2 \gamma \phi(\tau) W(e) \left\langle \frac{\partial W}{\partial e} , g(x,e,w) \right\rangle + \gamma \dot\phi(\tau) [W(e)]^2 . & 
\end{align*}
It follows from (\ref{eq:e18s}), (\ref{eq:e21s}) and (\ref{eq:e25s}) that
\begin{align*}
\left\langle \nabla U (\xi),F(\xi,w)\right\rangle \leq & - \tilde\alpha (\abs{x}) - \tilde\alpha (W(e)) - [H(x)]^2 + \gamma^2 [W(e)]^2 + \sigma_1 (\abs{w}) & \nonumber\\
& + 2 \gamma \phi(\tau) W(e) [L W (e) \!+\! H(x) \!+\! \sigma_2 (\abs{w})] - \gamma [2 L \phi \!+ \!\gamma (\phi^2+c)] [W(e)]^2 & \nonumber\\
= & - \tilde\alpha (\abs{x}) - \tilde\alpha (W(e)) - [\gamma \phi(\tau) W(e) - H(x)]^2 - (c-1)\gamma^2 [W (e)]^2 &\nonumber\\
& + \sigma_1 (\abs{w}) + 2 \gamma \phi(\tau) W(e)\sigma_2 (\abs{w}) & \nonumber \\
\leq & - \tilde\alpha (\abs{x}) -\! \tilde\alpha (W(e))\! -\! (c-1)\gamma^2 [W (e)]^2 \!+\! \sigma_1 (\abs{w}) \!+\! 2 \gamma \phi(\tau) W(e)\sigma_2 (\abs{w}) . & 
\end{align*}
From Young's inequality, for any $\varepsilon > 0$ we have 
\begin{align*}
\left\langle \nabla U (\xi),F(\xi,w)\right\rangle \leq & - \tilde\alpha (\abs{x}) - \tilde\alpha (W(e)) - (c-1)\gamma^2 [W (e)]^2 + \sigma_1 (\abs{w}) + \varepsilon \gamma^2 [W(e)]^2 &\nonumber\\
& + \frac{[\phi(\tau)]^2}{\varepsilon} [\sigma_2 (\abs{w})]^2 . & 
\end{align*}
It follows from Lemma~\ref{L:03} that
\begin{align*}
\left\langle \nabla U (\xi),F(\xi,w) \right\rangle \leq & - \tilde\alpha (\abs{x}) - \tilde\alpha (W(e)) - (c-\varepsilon-1) \gamma^2 [W (e)]^2 + \sigma_1 (\abs{w}) \\
&+ \frac{1}{\lambda^2\varepsilon} [\sigma_2 (\abs{w})]^2 . & 
\end{align*}
Given $\sigma (\cdot) := \sigma_1 (\cdot) + \frac{1}{\lambda^2\varepsilon} [\sigma_2 (\cdot)]^2$, we get
\begin{align*}
\left\langle \nabla U (\xi),F(\xi,w) \right\rangle \leq & - \tilde\alpha (\abs{x}) - \tilde\alpha (W(e)) - (c-\varepsilon-1) \gamma^2 [W (e)]^2 + \sigma (\abs{w}) & 
\end{align*}
Picking $\varepsilon$ sufficiently small such that $c-\varepsilon-1 > 0$ gives
\begin{align*} 
\left\langle \nabla U (\xi),F(\xi,w) \right\rangle \leq & - \tilde\alpha (\abs{x}) - \tilde\alpha (W(e)) + \sigma (\abs{w}) . &
\end{align*}
Then 
\begin{align}  \label{eq:e29s}
& \left\langle \nabla U (\xi),F(\xi,w) \right\rangle \leq \sigma (\abs{w}) . &
\end{align}
Also, for any $(\xi,w) \in \mathcal{D}$, we have
\begin{align*}
& U (\xi^+) = V(x^+) + \gamma \phi(\tau^+) [W(e^+)]^2 .& 
\end{align*}
It follows from (\ref{eq:e23s}) that
\begin{align*}
U (\xi^+) = V(x) + \gamma \phi(0) [W(0)]^2 . & 
\end{align*}
By the fact that $W(0) = 0$, we get
\begin{align*}
& U (\xi^+) \leq V(x) \leq U(\xi). & 
\end{align*}
Thus
\begin{align} \label{eq:e27s}
U (\xi^+) - U(\xi) \leq 0. &
\end{align}
for all $(\xi,w) \in \mathcal{D}$. Given (\ref{eq:e24s}), (\ref{eq:e29s}) and (\ref{eq:e27s}), we conclude that (\ref{eq:e23s}) is smoothly dissipative with $\rho(\xi) \equiv 0$ as in~\eqref{eq:e04} and~\eqref{eq:e05}.
\end{proof}
\begin{remark}
Variants of Theorem~\ref{T:2} including a (semiglobal) practical iISS property can be obtained by appropriate modifications to Assumption~\ref{A:01s}. Moreover, motivated by the connections between other engineering systems such as networked control systems and event-triggered control systems with sampled-data systems, we foresee that the application of our results to sampled-data systems can be useful for the study of the iISS property for such hybrid systems.
\hfill$\Box$
\end{remark}
To verify the effectiveness of Theorem~\ref{T:2}, we give an illustrative example. Consider the continuous-time plant with a bounded-input controller
\begin{align}
& \dot x = \sin(x) + u + w & \nonumber \\
& u = - \frac{x}{1+x^2} - \sin(x)  &  \nonumber
\end{align}
where $x,u,w \in \R$. Ignoring the digital channel, the closed-loop system is \emph{not} ISS but iISS. Given the digital communication effects, we write the system into a hybrid system the same as~\eqref{eq:e15s} and~\eqref{eq:e16s}
\begin{eqnarray}\nonumber
&\begin{array}{rcll}
\dot{x} &=& - \frac{x+e_x}{1+(x+e_x)^2} + \sin (x) - \sin(x+e_x) + e_u + w & \quad t \in [t_{j-1},t_j]  \\
\dot{e}_u &=& 0 & \quad t \in [t_{j-1},t_j] \\
\dot{e}_x &=& \frac{x+e_x}{1+(x+e_x)^2} + \sin(x+e_x) - e_u - w & \quad t \in [t_{j-1},t_j] \\
e_u (t^+_j) &=& 0 \\
e_x (t^+_j) &=& 0 . 
\end{array}
\end{eqnarray}
Taking $V(x) = \abs{x},W(e)=\abs{e}$, we have that the requirements in Assumption~\ref{A:01s} are satisfied with $L = 3, \gamma = 10$ and $H(x) = \frac{\abs{x}}{1+x^2}$. The choice of parameters gives $\tau_{\mathrm{MASP}} \simeq 0.13$.

\section{Conclusions} \label{sec:conclusions}
This paper was primarily concerned with Lyapunov characterizations of pre-iISS for hybrid systems. In particular, we established that the existence of a smooth iISS-Lyapunov function is equivalent to pre-iISS which unified and extended results in \cite{Angeli.1999,Angeli.2000}. We also related pre-iISS to dissipativity and detectability notions. Robustness of pre-iISS to vanishing perturbations was investigated, as well. We finally illustrated the effectiveness of our results by providing a maximum allowable sampling period guaranteeing iISS for sampled-data control systems.

Our results can be extended in several directions. In particular, further potential equivalent characterizations of pre-iISS in terms of time-domain behaviors including 0-input pre-AS plus uniform-bounded-energy-bounded-state as well as bounded energy weakly converging state plus 0-input pre-local stability (cf. \cite{Angeli.2004,Angeli.2000b} for the existing equivalent characterizations for continuous-time systems). Moreover, other related notions such as strong iISS, integral input-output-to-state stability and integral output-to-state stability could be investigated.

\section*{Acknowledgments}
	The first author is very grateful to Andy Teel for numerous discussions and for raising the questions which led to this paper. The first author also warmly thanks Dragan Ne\v{s}i\'{c} for his illuminating suggestions and insightful discussions. Particularly, the proof of the implication $(\ref{item:lyapunov-iiss}) \Rightarrow (\ref{item:iiss})$ of Theorem~\ref{T:1} is the fruit of a collaboration with him.

\appendices
\section{A Comparison Lemma for Hybrid Systems} \label{Ap:A}
A generalization of \cite[Lemma C.1]{Cai.2009}, that is a comparison lemma for hybrid systems, to the case of \emph{positive definite} functions is provided by Lemma~\ref{L:7}. Before presenting the lemma, we need to give the following technical results.
\begin{lemma} \label{L:8}
Let $\alpha$ be any positive definite function. Also, let $\rho_1 \in \Kinf$ and $\rho_2 \in \mathcal{L}$ come from Lemma~\ref{L:1} such that $\alpha (r) \geq \rho_1 (r) \rho_2 (r)$ for all $r \geq 0$. Given any absolutely continuous function $w \colon [t_0,\tilde{t}) \to \Rp$ with $\tilde t > t_0$ and $t_0 \geq 0$ satisfying for almost all $t$
\begin{equation} \label{eq:e62}
\dot{w} ( t ) \leq - \alpha (w(t)), \quad w(t_0) \geq 0 ,
\end{equation}
there exists $\beta \in \KL$ such that for all $t \in [t_0,\tilde{t})$ the following holds
\begin{eqnarray}
w (t) \leq \beta ( w ( t_0 ) , \rho_{2} (w(t_0)) (t - t_0) )  \label{eq:e63}
\end{eqnarray}
where $\beta (r , \tau )$ with $\beta (r,0) = r$ is the maximal solution of the differential equation $\frac {\mathrm{d} w } {\mathrm{d} \tau} = - \rho_1 (w)$.
\end{lemma}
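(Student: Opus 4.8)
The plan is to reduce the positive-definite differential inequality to a genuine scalar comparison ODE by freezing the slowly-varying class-$\mathcal{L}$ factor at its initial value, and then to invoke the classical comparison principle together with a linear rescaling of time.

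First I would observe that $w$ is non-increasing. Since $\alpha$ is positive definite and $w(t) \in \Rp$, we have $\dot{w}(t) \leq -\alpha(w(t)) \leq 0$ for almost all $t$, whence $w(t) \leq w(t_0)$ for all $t \in [t_0,\tilde t)$. Setting $c := \rho_2(w(t_0))$ and using that $\rho_2 \in \mathcal{L}$ is non-increasing while $\rho_1(w(t)) \geq 0$, the bound $w(t) \leq w(t_0)$ yields $\rho_2(w(t)) \geq c$, so that, invoking $\alpha(r) \geq \rho_1(r)\rho_2(r)$,
\[
\dot{w}(t) \leq -\alpha(w(t)) \leq -\rho_1(w(t))\,\rho_2(w(t)) \leq -c\,\rho_1(w(t))
\]
for almost all $t$.

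Next I would introduce the scalar comparison equation $\dot{y} = -c\,\rho_1(y)$ with $y(t_0) = w(t_0)$. Because $\rho_1 \in \Kinf$ is continuous, this right-hand side is continuous, so a maximal solution exists, and the continuity (maximal-solution) version of the comparison principle applies: as $w$ is absolutely continuous with $\dot{w} \leq -c\,\rho_1(w)$ a.e.\ and $w(t_0) = y(t_0)$, one obtains $w(t) \leq y(t)$ on $[t_0,\tilde t)$. To identify $y$ with the claimed bound, I would let $\beta(r,\tau)$ denote the maximal solution of $\frac{\mathrm{d}\beta}{\mathrm{d}\tau} = -\rho_1(\beta)$ with $\beta(r,0) = r$, and note by the chain rule that $t \mapsto \beta(w(t_0),\,c(t-t_0))$ solves precisely $\dot{y} = -c\,\rho_1(y)$ with the correct initial value. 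This gives $w(t) \leq \beta(w(t_0),\,\rho_2(w(t_0))(t-t_0))$ on $[t_0,\tilde t)$. Finally I would record that $\beta \in \KL$: for $\rho_1 \in \Kinf$ the solution map of $\dot{\beta} = -\rho_1(\beta)$ is continuous, strictly increasing in the initial value $r$, and non-increasing with limit $0$ in $\tau$, which is exactly the class-$\KL$ characterization of such a flow (cf.\ \cite{Kellett.2014}).

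The step I expect to be the main obstacle is the comparison argument itself. Since $\rho_1$ need not be locally Lipschitz, uniqueness of solutions to $\dot{y} = -c\,\rho_1(y)$ may fail, which is precisely why the statement is phrased in terms of the \emph{maximal} solution; I would therefore appeal to the continuity version of the comparison theorem for maximal solutions rather than to any Lipschitz-based uniqueness. A secondary technical point is the degenerate case $w(t_0) = 0$, where $c$ may vanish: then $\dot{w} \leq 0$ and $w \geq 0$ force $w \equiv 0$, while $\beta(0,\cdot) \equiv 0$, so the asserted bound holds trivially and the argument above remains consistent.
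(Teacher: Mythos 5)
Your reduction to $\dot{w}(t) \leq -c\,\rho_1(w(t))$ with $c := \rho_2(w(t_0))$, obtained by freezing $\rho_2$ at its initial value via the monotonicity of $w$ and of $\rho_2$, is exactly the paper's first step (the paper phrases it as the time rescaling $\tau = \rho_2(w(t_0))\,t$ together with $\rho_2(w(t))/\rho_2(w(t_0)) \geq 1$, which is the same computation), and your use of the maximal-solution comparison principle for continuous right-hand sides does yield $w(t) \leq \beta(w(t_0), c\,(t-t_0))$. The gap is in your final step: the claim that for an arbitrary $\rho_1 \in \Kinf$ the maximal-solution flow of $\mathrm{d}\beta/\mathrm{d}\tau = -\rho_1(\beta)$ is ``strictly increasing in the initial value $r$'' and hence of class $\KL$. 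Without uniqueness of solutions this is false. Take $\rho_1(s) = \sqrt{s}$, or any $\Kinf$ function with $\int_0^1 \mathrm{d}s/\rho_1(s) < \infty$: the maximal solution is $\beta(r,\tau) = \big(\max\{\sqrt{r} - \tau/2,\, 0\}\big)^2$, which reaches zero in finite time, so for fixed $\tau > 0$ it vanishes for every $r \leq \tau^2/4$. Thus $\beta(\cdot,\tau)$ is zero at nonzero arguments and not strictly increasing, hence not of class $\K$, and the object you exhibit is not a $\KL$ function; the lemma as stated is therefore not yet proved.

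This is precisely what the paper's seemingly innocuous sentence ``without loss of generality, assume that $\rho_1$ is (locally) Lipschitz'' is there to repair, and it is the one idea missing from your argument. Any $\rho_1 \in \Kinf$ admits a locally Lipschitz positive definite lower bound $\tilde{\rho}_1 \leq \rho_1$ (for instance the inf-convolution $\tilde{\rho}_1(s) = \inf_{r \geq 0}\{\rho_1(r) + \abs{s-r}\}$), and replacing $\rho_1$ by $\tilde{\rho}_1$ preserves the hypothesis $\alpha \geq \tilde{\rho}_1 \rho_2$ from Lemma~\ref{L:1}. After this replacement, $\beta \equiv 0$ is the unique solution from the origin, no trajectory starting at $r>0$ can touch zero in finite time, distinct solutions cannot cross, and the flow is genuinely of class $\KL$; at that point the elementary Lipschitz comparison lemma that the paper cites suffices, and your maximal-solution machinery is no longer needed. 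Ironically, you identified non-uniqueness as the delicate issue but addressed it only where it is harmless (the comparison inequality) and not where it actually bites (the $\KL$ property of $\beta$). Your treatment of the degenerate case $w(t_0)=0$ is correct but is not where the difficulty lies.
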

\begin{proof} 
Consider any arbitrary $w (t_0) > 0$. Recalling Lemma~\ref{L:1} we get
\begin{align*}
\dot{w} (t) \leq - \rho_1 ( w(t) ) \rho_{2} ( w(t)) .
\end{align*}
Dividing both sides by $\rho_2 ( w (t_0))$ gives
\begin{align*}
\frac{1}{\rho_2 ( w (t_0))} \dot{w} (t) \leq - \frac{ \rho_1 ( w(t) ) \rho_2 ( w(t)) } { \rho_2 ( w (t_0) ) } .
\end{align*}
Define $\tau := \rho_{2} (w (t_0)) t $, and so
\begin{align} \label{eq:e655}
\frac{\mathrm{d} w(\frac{\tau}{\rho_{2} ( w (t_{0}))})}{\mathrm{d} \tau} \leq - \frac { \rho_{1} ( w (\frac{\tau}{\rho_{2} ( w (t_{0}))}) ) \rho_{2} ( w(\frac{\tau}{\rho_{2} ( w (t_{0}))}))} { \rho_{2} ( w (t_{0})) } .
\end{align}
Since $\rho_{2} \in \mathcal{L}$ and \eqref{eq:e62} (i.e. $w$ is non-increasing), we have
\begin{equation} \label{eq:e66}
\frac {\rho_{2} ( w(t))} {\rho_{2} ( w(t_{0})) } \geq 1 .
\end{equation}
Combining (\ref{eq:e655}) with (\ref{eq:e66}) yields
\begin{align} \label{eq:e67}
\frac{\mathrm{d} w}{\mathrm{d} \tau} \leq - \rho_1 (w) .
\end{align}
Without loss of generality, assume that $\rho_1$ is (locally) Lipschitz. Given that (\ref{eq:e67}) and using a standard comparison lemma (e.g. \cite[ Lemma 4.4]{Lin.1996}) give~\eqref{eq:e63}.
\end{proof}
The following lemma is a special version of~\cite[Proposition 1]{Nesic.2004a}.
\begin{lemma} \label{L:9}
Let $\alpha$ be a positive definite function with $\alpha(r) < r$ for all $r > 0$. Also, let $\rho_1 \in \Kinf$ and $\rho_2 \in \mathcal{L}$ come from Lemma~\ref{L:1} such that $\alpha (r) \geq \rho_1 (r) \rho_2 (r)$ for all $r \geq 0$. Given any function $w \colon \Zp \to \Rp$ satisfying for all \textcolor{black}{$j \geq j_0$ with $j_0 \geq 0$} 
\begin{equation} \label{eq:e64}
w (j + 1) - w (j) \leq - \alpha (w(j)), \quad w(j_0) \geq 0 ,
\end{equation}
there exists $\beta \in \KL$ such that for all $j \geq j_0$ the following holds
\begin{eqnarray}
w (j) \leq \beta ( w ( j_0 ) , \rho_{2} (w(j_0)) (j - j_0) )  \label{eq:e65}
\end{eqnarray}
where $\beta (r , \tau)$ with $\beta (r,0) = r$ is the maximal solution of the differential equation $\frac {\mathrm{d} y } {\mathrm{d} \tau} = - \rho_{1} (y)$.
$\textrm{ }$\hfill$\Box$
\end{lemma}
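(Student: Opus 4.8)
The plan is to mirror the continuous-time argument of Lemma~\ref{L:8}, reducing the discrete difference inequality to a one-step comparison against the flow of $\frac{\mathrm{d}y}{\mathrm{d}\tau}=-\rho_1(y)$ and then iterating. First I would dispose of the trivial case $w(j_0)=0$: since $\alpha$ is positive definite, \eqref{eq:e64} gives $w(j+1)\leq w(j)-\alpha(w(j))\leq w(j)$, so $w$ is non-increasing and in fact $w\equiv 0$ for $j\geq j_0$, whence \eqref{eq:e65} holds with $\beta(0,\cdot)\equiv 0$. So I assume $w(j_0)>0$ and set $a:=\rho_2(w(j_0))>0$, which plays the role of the rescaled step size (compare $\tau:=\rho_2(w(t_0))t$ in Lemma~\ref{L:8}).

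Next I would exploit the monotonicity of $w$ and of $\rho_2$. Because $w(j)\leq w(j_0)$ for all $j\geq j_0$ and $\rho_2\in\mathcal{L}$ is non-increasing, we have $\rho_2(w(j))\geq\rho_2(w(j_0))=a$. Combining this with $\alpha(r)\geq\rho_1(r)\rho_2(r)$ and \eqref{eq:e64} yields the one-step estimate
\begin{equation*}
w(j+1)\leq w(j)-\rho_1(w(j))\rho_2(w(j))\leq w(j)-a\,\rho_1(w(j))\qquad\forall j\geq j_0 .
\end{equation*}
Moreover, using the standing hypothesis $\alpha(r)<r$, one checks that $a\,\rho_1(w(j))\leq\rho_1(w(j))\rho_2(w(j))\leq\alpha(w(j))<w(j)$, so the right-hand side stays strictly positive; this keeps the whole iteration inside the region where $\rho_1$ is increasing.

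The core idea is to read the right-hand side as an \emph{Euler step} that under-estimates the true flow. Writing the flow in integral form, $\beta(r,a)=r-\int_0^a\rho_1(\beta(r,s))\,\mathrm{d}s$, and using $\beta(r,s)\leq r$ together with monotonicity of $\rho_1\in\Kinf$, I get $\int_0^a\rho_1(\beta(r,s))\,\mathrm{d}s\leq a\,\rho_1(r)$, hence $\beta(r,a)\geq r-a\,\rho_1(r)$ for every $r\geq 0$ (the bound also survives the borderline situation where the flow reaches the origin). Applying this with $r=w(j)$ gives the clean single-step comparison $w(j+1)\leq\beta(w(j),a)$.

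Finally I would iterate. Assuming, as in Lemma~\ref{L:8}, that $\rho_1$ is locally Lipschitz, the flow $\beta(\cdot,s)$ is order-preserving and satisfies the semigroup identity $\beta(\beta(r,s),t)=\beta(r,s+t)$. A straightforward induction on $j$—with hypothesis $w(j)\leq\beta(w(j_0),(j-j_0)a)$, the single-step bound, and monotonicity of $\beta(\cdot,a)$—then yields $w(j)\leq\beta(w(j_0),(j-j_0)a)=\beta(w(j_0),\rho_2(w(j_0))(j-j_0))$, which is exactly \eqref{eq:e65}. I expect the main obstacle to be the bookkeeping around the effective step size $a=\rho_2(w(j_0))$: one must verify, via $\alpha(r)<r$, that replacing the variable factor $\rho_2(w(j))$ by the smaller constant $a$ preserves positivity at every step, so that the Euler under-estimation inequality is legitimately applicable throughout the iteration.
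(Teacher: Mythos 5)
Your proof is correct, but it is worth knowing that the paper never proves Lemma~\ref{L:9} at all: the lemma is stated as ``a special version of \cite[Proposition 1]{Nesic.2004a}'' and closed with a box, so the comparison here is between your self-contained argument and an external citation. Your route mirrors the paper's own proof of the continuous-time Lemma~\ref{L:8}: freeze the $\mathcal{L}$-factor at its initial value $a=\rho_2(w(j_0))$ (legitimate because $w$ is non-increasing and $\rho_2$ is non-increasing), convert the difference inequality into the one-step Euler under-estimate $w(j+1)\le w(j)-a\,\rho_1(w(j))\le\beta(w(j),a)$, and iterate using order preservation of the scalar flow together with the semigroup identity, which is exactly the paper's Lemma~\ref{L:0}. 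The steps you treat loosely are the same ones the paper treats loosely in Lemma~\ref{L:8}: the ``without loss of generality $\rho_1$ is locally Lipschitz'' reduction (replacing $\rho_1$ by a Lipschitz minorant only enlarges the maximal solution, so the estimate survives, though strictly speaking it changes the $\beta$ named in the statement), and the fact that the maximal solution of $\mathrm{d}y/\mathrm{d}\tau=-\rho_1(y)$ can lose strict monotonicity after reaching the origin, so calling it class-$\KL$ requires the usual cosmetic modification. One further edge case you assert away: $a>0$ presumes $\rho_2(w(j_0))>0$; if instead $\rho_2(w(j_0))=0$, then \eqref{eq:e65} reduces to $w(j)\le\beta(w(j_0),0)=w(j_0)$, which holds anyway because $w$ is non-increasing, so nothing breaks. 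What your approach buys is a self-contained appendix in which the discrete case is visibly parallel to Lemma~\ref{L:8}; what the paper's citation buys is brevity and the greater generality of the quoted comparison principle for discrete-time systems.
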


The semigroup property of $\beta (\cdot,\cdot)$ appearing in Lemma~\ref{L:8} and Lemma~\ref{L:9} will be useful in the sequel.
\begin{lemma} (\cite{Sontag.1998b}) \label{L:0}
Let $\beta \in \mathcal{KL}$ come from either Lemma~\ref{L:8} or Lemma~\ref{L:9}. Then the following hold
\begin{itemize}
\item for all $r,s,s_1 \geq 0$ with $s \geq s_1$
\begin{equation} \label{eq:e83}
\beta (\beta (r,s_1) , s - s_1 ) = \beta (r,s) ;
\end{equation}
\item for all $r,s_1,s_2 \geq 0$
\begin{equation} \label{eq:e82}
\beta (\beta (r,s_1),s_2) = \beta (r, s_1 + s_2) = \beta (\beta (r,s_2),s_1) .
\end{equation}
\end{itemize}
\hfill$\Box$
\end{lemma}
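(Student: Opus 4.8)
The plan is to reduce both displayed identities to a single \emph{core identity},
$$
\beta(\beta(r,a),b) = \beta(r,a+b) \qquad \forall\, r,a,b \geq 0 ,
$$
and then to derive this core identity from the fact that, by its very definition in Lemma~\ref{L:8} and Lemma~\ref{L:9}, the map $\beta(r,\cdot)$ is the \emph{maximal} solution of the \emph{autonomous} scalar equation $\frac{\mathrm{d}w}{\mathrm{d}\tau} = -\rho_1(w)$ with $\beta(r,0)=r$ and $\rho_1 \in \Kinf$. The two claims then fall out immediately: \eqref{eq:e83} follows by setting $a := s_1$ and $b := s - s_1 \geq 0$ (so $a+b=s$), while \eqref{eq:e82} follows by taking $(a,b) := (s_1,s_2)$ and separately $(a,b) := (s_2,s_1)$, both of which produce $\beta(r,s_1+s_2)$ and so also give the commutation $\beta(\beta(r,s_1),s_2) = \beta(\beta(r,s_2),s_1)$.

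To prove the core identity I would first fix $r \geq 0$ and $a \geq 0$ and set $y(\tau) := \beta(r, a + \tau)$ for $\tau \geq 0$. Because the defining equation is autonomous, the time shift does not disturb it:
$$
\frac{\mathrm{d}y}{\mathrm{d}\tau}(\tau) = \frac{\mathrm{d}}{\mathrm{d}\tau}\,\beta(r, a+\tau) = -\rho_1\!\big(\beta(r,a+\tau)\big) = -\rho_1(y(\tau)) ,
$$
so $y$ is again a solution of $\frac{\mathrm{d}w}{\mathrm{d}\tau} = -\rho_1(w)$, now issued from $y(0) = \beta(r,a)$. What remains is to identify $y$ with the maximal solution from $\beta(r,a)$, i.e.\ with $\beta(\beta(r,a),\cdot)$.

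The decisive step is to show that $y$ is itself maximal. I would argue by contradiction: if some solution $\tilde{y}$ with $\tilde{y}(0)=\beta(r,a)$ satisfied $\tilde{y}(\tau_0) > y(\tau_0)$ for some $\tau_0>0$, then concatenating $\beta(r,\cdot)$ on $[0,a]$ with $\tilde{y}(\cdot - a)$ on $[a,\infty)$ yields a function $\hat{w}$ that is issued from $r$, is continuous (both pieces agree at $\tau=a$, each equal to $\beta(r,a)$), and solves the autonomous equation on each piece; yet $\hat{w}(a+\tau_0) = \tilde{y}(\tau_0) > y(\tau_0) = \beta(r, a+\tau_0)$, contradicting the maximality of $\beta(r,\cdot)$. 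Hence $y(\tau) = \beta(\beta(r,a),\tau)$ for all $\tau \geq 0$, which is exactly the core identity with $b := \tau$.

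The hard part is not the algebra but the careful handling of the maximal-solution notion: since $\rho_1 \in \Kinf$ is only assumed continuous, ordinary uniqueness of solutions may fail near the equilibrium $w=0$, so I phrase the entire argument in terms of maximality (the pointwise-largest solution) rather than uniqueness, which is what makes the concatenation step robust. The degenerate cases are then harmless: for $r=0$ one has $\beta(0,\cdot)\equiv 0$ because $\rho_1(0)=0$, and for $r>0$ the solution is nonincreasing (as $\rho_1 \geq 0$) and stays nonnegative, so the concatenated $\hat{w}$ is admissible throughout. This recovers the statement in the form used from \cite{Sontag.1998b}.
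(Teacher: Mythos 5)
Your proof is correct, but note that the paper does not actually prove this lemma: it is stated as a citation to \cite{Sontag.1998b} and closed with a box, so the ``paper's proof'' is simply an appeal to the textbook semigroup property of the flow of an autonomous scalar differential equation, and your proposal supplies a self-contained argument for exactly that fact. The reduction of both \eqref{eq:e83} and \eqref{eq:e82} to the core identity $\beta(\beta(r,a),b)=\beta(r,a+b)$ is the right decomposition, and the commutation in \eqref{eq:e82} falls out as you say. The substantive step---showing that the time-shifted trajectory $y(\tau)=\beta(r,a+\tau)$ is the \emph{maximal} solution issued from $\beta(r,a)$---is handled correctly by your concatenation argument: the spliced function is continuous at the junction, has matching one-sided derivatives there by autonomy of the equation, and so is a genuine solution issued from $r$, whence maximality of $\beta(r,\cdot)$ forbids any solution from $\beta(r,a)$ ever exceeding $y$; since $y$ is itself a solution, it is the maximal one, and maximal solutions are unique because two of them must dominate each other. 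Phrasing everything through maximality rather than uniqueness is indeed the right precaution, since $\rho_1\in\Kinf$ need not be locally Lipschitz. As an aside, for this particular equation forward uniqueness of nonnegative solutions does in fact hold (the right-hand side vanishes only at the origin, and a solution that reaches the origin must remain there because it is nonincreasing and nonnegative), so your contradiction step could be replaced by that observation; but your route is the more robust one and is consistent with how ``maximal solution'' is invoked in Lemmas~\ref{L:8} and~\ref{L:9}.
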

\begin{lemma} \label{L:7}
Let $\alpha$ be a positive definite function with $\alpha(r) < r$ for all $r > 0$ satisfying Lemma~\ref{L:1} (i.e. $\alpha (r) \geq \rho_1 (r) \rho_2 (r)$ for all $r \geq 0$). Also consider a hybrid arc $w \colon \dom \, w \to \Rp$ satisfying
\begin{itemize}
\item for almost all $t$ such that $(t,j) \in \dom w \backslash \Gamma(w)$
\begin{equation} \label{eq:e59}
\dot{w}(t,j) \leq - \alpha (w(t,j)) ;
\end{equation} 
\item for all $(t,j) \in \Gamma(w)$ it holds that
\begin{equation} \label{eq:e60}
w(t,j+1) - w(t,j) \leq - \alpha (w(t,j)) .
\end{equation}
\end{itemize}
Then there exists $\tilde{\beta} \in \mathcal{KLL}$ such that for all $(t,j) \in \mathrm{dom} \, w$ the following holds
\begin{align} \label{eq:e86}
& w (t,j) \leq \tilde{\beta} (w(0,0),t,j) .
\end{align} 
Moreover, the function $\tilde\beta$ satisfies the following properties
\begin{align}
& \tilde{\beta} (r,0,0) = r  \qquad \forall r \in \mathbb{R}_{\geq 0} & \label{eq:e24} \\
& \tilde{\beta} (r,s_{1},s_{2}) = \tilde{\beta} (r,s_{2},s_{1}) \qquad \forall r,s_1,s_2 \in \mathbb{R}_{\geq 0}& \label{eq:e25}\\
& \tilde{\beta} (\tilde{\beta} (r,\underline{t},\underline{j}),\bar{t} - \underline{t}, \bar{j} - \underline{j}) = \tilde{\beta} (r,\bar{t},\bar{j}) \quad \forall r,\bar{t},\bar{j},\underline{t},\underline{j} \in \mathbb{R}_{\geq 0} \,\,with\,\, \bar{t} \geq \underline{t}, \bar{j} \geq \underline{j}. & \label{eq:e29}
\end{align}
\end{lemma}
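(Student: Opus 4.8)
The plan is to reduce the hybrid comparison to its continuous and discrete components and then glue them together. On each flow interval the differential inequality \eqref{eq:e59} is exactly the hypothesis of Lemma~\ref{L:8}, and each single jump \eqref{eq:e60} is the one-step instance of Lemma~\ref{L:9}. The decisive feature is that both lemmas produce estimates in terms of the \emph{same} comparison function $\beta$ — the maximal solution of $\tfrac{\mathrm d y}{\mathrm d\tau}=-\rho_1(y)$, with $\rho_1\in\Kinf$, $\rho_2\in\mathcal{L}$ supplied by Lemma~\ref{L:1} — and that this $\beta$ carries the semigroup identities of Lemma~\ref{L:0}. I would therefore build $\tilde\beta$ out of $\beta$ and read off the structural identities \eqref{eq:e24}--\eqref{eq:e29} from the semigroup property \eqref{eq:e82}--\eqref{eq:e83}.

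First I would record two preliminary facts. Since $\dot w(t,j)\le-\alpha(w(t,j))\le 0$ on flows and $w(t,j+1)-w(t,j)\le-\alpha(w(t,j))\le 0$ at jumps, the arc $w$ is non-increasing along $\dom w$; in particular $w(t,j)\le w(0,0)$, and because $\rho_2\in\mathcal{L}$ is non-increasing, $\rho_2(w(t,j))\ge\rho_2(w(0,0))$ for all $(t,j)$. Next, fixing $(t,j)\in\dom w$ and enumerating the flow intervals $I^0,\dots,I^j$ together with the jump instants $t_1\le\cdots\le t_j$ preceding $(t,j)$, Lemma~\ref{L:8} gives on each interval $w(\cdot,k)\le\beta\bigl(w(t_k,k),\rho_2(w(t_k,k))(\cdot-t_k)\bigr)$, and Lemma~\ref{L:9} (one step) gives across each jump $w(t_{k+1},k+1)\le\beta\bigl(w(t_{k+1},k),\rho_2(w(t_{k+1},k))\bigr)$.

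The core step is to telescope these bounds. Using monotonicity of $\beta$ in its first argument, the inequality $\rho_2(w(t_k,k))\ge\rho_2(w(0,0))$ (which only enlarges the right-hand side once the time-scaling is replaced by the smaller $\rho_2(w(0,0))$, since $\beta(r,\cdot)\in\mathcal{L}$), and the identity $\beta(\beta(r,s_1),s_2)=\beta(r,s_1+s_2)$ from \eqref{eq:e82}, an induction on the number of flow/jump alternations shows that each flowed length contributes $\rho_2(w(0,0))(t_{k+1}-t_k)$ and each jump contributes $\rho_2(w(0,0))$ to a single accumulated time argument, yielding
\[
w(t,j)\le\beta\bigl(w(0,0),\,\rho_2(w(0,0))(t+j)\bigr)\qquad\forall\,(t,j)\in\dom w.
\]
Taking $\tilde\beta(r,t,j):=\beta\bigl(r,\rho_2(r)(t+j)\bigr)$ gives \eqref{eq:e86}, and $\tilde\beta\in\KLL$ follows directly: $r\mapsto\beta(r,\rho_2(r)s)$ is of class $\K$ (the first-argument growth and, via $\rho_2\in\mathcal{L}$, the time dependence are monotone in the same direction), while $s\mapsto\beta(r,\rho_2(r)s)\in\mathcal{L}$. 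Properties \eqref{eq:e24} and \eqref{eq:e25} are then immediate, since $\tilde\beta(r,0,0)=\beta(r,0)=r$ and $\tilde\beta$ depends on $(t,j)$ only through $t+j$.

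I expect the cocycle identity \eqref{eq:e29} to be the main obstacle. It is precisely a one-parameter semigroup (flow) property in the hybrid time $t+j$, and the state-dependent scaling $\rho_2(r)$ does not respect it verbatim: composing $\tilde\beta$ reintroduces a factor $\rho_2$ evaluated at an intermediate value rather than at $r$, so the naive candidate satisfies \eqref{eq:e29} only as an inequality. To obtain equality I would pass from $\beta$ to a genuine scalar comparison semigroup $\Phi$ in the variable $t+j$ — reparametrizing so that the $\mathcal{L}$-scaling is absorbed into the flow and setting $\tilde\beta(r,t,j):=\Phi(r,t+j)$ — whereupon \eqref{eq:e82}--\eqref{eq:e83} translate exactly into \eqref{eq:e24}--\eqref{eq:e29}. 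The non-routine content is to choose $\Phi$ that is simultaneously a semigroup, of class $\KL$, and an upper bound for the $\rho_2$-scaled estimate above; the monotonicity of $w$ is what allows $\Phi$ to dominate $w$ after reparametrization, and it is here that the hypotheses $\rho_1\in\Kinf$, $\rho_2\in\mathcal{L}$ and $\alpha(r)<r$ are genuinely needed.
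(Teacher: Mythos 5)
Your construction of the decay estimate is exactly the paper's: you use Lemma~\ref{L:8} on flow intervals, Lemma~\ref{L:9} across jumps, the monotonicity of $w$ (hence $\rho_2(w(t_k,k))\ge\rho_2(w(0,0))$), and the semigroup identity \eqref{eq:e82} to telescope everything into the single bound $w(t,j)\le\beta\bigl(w(0,0),\rho_2(w(0,0))(t+j)\bigr)$, and you adopt the same candidate $\tilde\beta(r,t,j)=\beta\bigl(r,\rho_2(r)(t+j)\bigr)$, from which \eqref{eq:e86}, \eqref{eq:e24} and \eqref{eq:e25} follow just as in the paper. Your diagnosis of \eqref{eq:e29} is also correct, and it is sharper than the paper's own treatment: writing $q:=\tilde\beta(r,\underline t,\underline j)\le r$ and $\Delta:=(\bar t+\bar j)-(\underline t+\underline j)$, the left-hand side of \eqref{eq:e29} equals $\beta\bigl(q,\rho_2(q)\Delta\bigr)$, while by \eqref{eq:e82} the right-hand side equals $\beta\bigl(q,\rho_2(r)\Delta\bigr)$; since $\rho_2(q)\ge\rho_2(r)$ one only gets ``$\le$'', with equality failing whenever $\rho_2$ is strictly decreasing at the relevant values (take $\rho_1=\id$, $\rho_2(r)=1/(1+r)$: then $\tilde\beta(r,t,j)=re^{-(t+j)/(1+r)}$ visibly violates the cocycle identity). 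The paper asserts that \eqref{eq:e29} ``immediately follows'' from the definition and \eqref{eq:e82}; for its own $\tilde\beta$ that assertion holds only as an inequality, so on this point you have found a real defect rather than created one.

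The genuine gap is in your proposed repair. You reduce \eqref{eq:e29} to the existence of a class-$\KL$ function $\Phi$ that (i) is a true semigroup in $s=t+j$, i.e.\ $\Phi(\Phi(r,s_1),s_2)=\Phi(r,s_1+s_2)$ with $\Phi(r,0)=r$, and (ii) dominates $\beta\bigl(r,\rho_2(r)s\bigr)$, but you never construct $\Phi$ and explicitly defer this as ``the non-routine content''. That deferred step is precisely the hard part, and it is not routine: any such semigroup is the flow of an autonomous scalar equation $\dot z=-g(z)$, and a pointwise rate comparison would force $g(z)\le\rho_1(z)\,\inf_{r\ge z}\rho_2(r)$, which is $0$ because $\rho_2\in\mathcal{L}$ decays to zero; so domination cannot be obtained pointwise and must instead be argued through time-to-reach integrals (one needs $\int_v^r dz/g(z)\ \ge\ \rho_2(r)^{-1}\int_v^r dz/\rho_1(z)$ for all $0<v\le r$) or through a reparametrization of the form $\Phi(r,s)=\theta^{-1}\bigl(\theta(r)e^{-s}\bigr)$ with a suitably chosen $\theta\in\Kinf$. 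Without exhibiting such a $g$ or $\theta$, your proof of \eqref{eq:e29} is incomplete --- just as the paper's is. Note, finally, that the paper never uses \eqref{eq:e24}--\eqref{eq:e29} downstream (the proof of Lemma~\ref{L:2} invokes only the bound \eqref{eq:e86}), and concatenation arguments need \eqref{eq:e29} only as the inequality $\tilde\beta(\tilde\beta(r,\underline t,\underline j),\bar t-\underline t,\bar j-\underline j)\le\tilde\beta(r,\bar t,\bar j)$, which your candidate (and the paper's) does satisfy; so the honest fix is either to carry out the semigroup construction you sketch, or to weaken \eqref{eq:e29} to that inequality, at no cost to the rest of the paper.
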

\begin{proof} 
Partition $\mathrm{dom} \, w := \bigcup\limits_{k = 0}^{J} {\left( {\left[ {{t_k},{t_{k + 1}}} \right],k} \right)}$ with $t_0 = 0, t = t_{J+1}$ and $w_{0} := w(0,0)$. Define the function $\tilde{\beta} (w_0, t , j) \in \mathcal{KLL}$ by
$$\tilde{\beta} (w_0, t , j) := \beta (\beta(w_0,\rho_2 (w_0)t),\rho_2 (w_0)j).$$ 
Note that, from (\ref{eq:e82}), $\beta (\beta(w_0,\rho_2 (w_0)t),\rho_2 (w_0)j) = \beta (w_0,\rho_2 (w_0)(t+j))$. Without loss of generality, we assume that $t_{0} \neq t_{1}$. Then $w$ flows for any $t$ in the first flow interval $[0,t_{1}]$. From Lemma~\ref{L:8}, we get
\begin{align} \label{eq:e78}
& w(t,0) \leq \beta (w_0,\rho_2 (w_0) t) = \beta (\beta (w_0,0),\rho_2 (w_0) t) = \tilde{\beta} (w_0, t , 0) &
\end{align}
for all $t \in [0 , t_1]$ with $(t,0) \in \mathrm{dom}\,w$.

Let $J_1$ jumps occur consecutively. It follows from Lemma~\ref{L:9} that for any $(t_k,k) \in \mathrm{dom} \, w$ with $k \in \{ 1, \dots , J_1 \}$
\begin{align*}
& w(t_{k},k) \leq  \beta (w (t_{1},0),\rho_2 (w (t_{1},0)) k) .
\end{align*}
It follows from (\ref{eq:e78}) that
\begin{align*}
& w(t_{k},k) \leq \beta (\beta (w_{0},\rho_2 (w_0) t_1),\rho_{2} (w (t_1,0)) k) .
\end{align*}
By the fact that $\rho_2 (w (t_1,0)) \geq \rho_2 (w_0)$, we have
\begin{align*}
& w(t_k,k) \leq \beta (\beta (w_0,\rho_2 (w_0) t_1),\rho_2 (w_0) k) = \tilde{\beta} (w_0, t_1 , k) .
\end{align*}
It follows from the fact that $t_{1} = \dots = t_{J_{1}}$ that 
\begin{align} \label{eq:e21}
& w(t_k,k) \leq \tilde{\beta} (w_0,t_k,k)
\end{align}
for all $(t_k,k) \in \mathrm{dom}\,w$ with $k \in \{ 1, \dots , J_1 \}$. Let $w(t_{J_1},J_1) \neq 0$, otherwise according to $(\ref{eq:e59})$ and $(\ref{eq:e60})$, $w(t,k) \equiv 0$ for all $(t,j) \in \dom w$ such that $(t,j) \geq (t_{J_1},J_1)$; so the proof is complete. Assume that $t_{J_1} \neq t_{J_1+1}$, so $w(t,J_1)$ flows for all $t$ in the second flow interval of $\dom w$. Then using Lemma~\ref{L:8}, we get
\begin{align*}
& w(t,J_1) \leq \beta ( w ( t_{J_1},J_1 ),\rho_2 (w ( t_{J_1},J_1 )) (t - t_{J_1}) ) .
\end{align*}
It follows from (\ref{eq:e21}) and the fact that $\tilde\beta (w_0,t_{J_1},J_1) = \beta (w_0,\rho_2 (w_0)(t_{J_1}+J_1))$ that
for all $(t,J_1) \in \dom w$
\begin{align*}
& w(t,J_1) \leq \beta ( \beta (w_0,\rho_2 (w_0)(t_{J_1}+J_1)), \rho_2 (w ( t_{J_1},J_1 )) (t - t_{J_1})  ) .
\end{align*}
It follows with the fact that $\rho_{2} (w (t_{J_1},J_1)) \geq \rho_2 (w_0)$ that
\begin{align*}
& w(t,J_1) \leq \beta ( \beta (w_0,\rho_2 (w_0)(t_{J_1}+J_1)), \rho_2 (w_0) (t - t_{J_1}) ) .
\end{align*}
By application of \eqref{eq:e82}, we get
\begin{align*}
& w(t,J_1) \leq \beta ( w_0, \rho_2 (w_0) ( t + J_1 )).
\end{align*}
By reapplication of (\ref{eq:e82}), we have
\begin{align} \label{eq:e23}
& w(t,J_1) \leq \beta ( \beta(w_0,\rho_2 (w_0) t), \rho_2 (w_0) J_1 ) = \tilde\beta(w_0,t,J_1)
\end{align}
for all $t \in [t_{J_1} , t_{J_1 +1}]$ with $(t,J_1) \in \mathrm{dom}\,w$. 

Now $J_2$ jumps happen in a row. Given that Lemma~\ref{L:9} for any $(t_k,k) \in \dom w$ with $k \in \{ J_1 + 1 , \dots , J_2 + J_1 \}$ gives
\begin{align*}
& w(t_k,k) \leq \beta ( w ( t_{J_1 + 1} , J_1 ),\rho_2 (w ( t_{J_1 + 1} , J_1 )) (k - J_{1}) ) . &
\end{align*}
It follows from (\ref{eq:e23}) and the fact that $\tilde\beta(w_0,t_{J_1+1},J_1) = \beta ( w_0, \rho_2 (w_0) ( t_{J_1 + 1} + J_1 ))$ that 
\begin{align*}
& w(t_k,k) \leq \beta ( \beta ( w_0, \rho_2 (w_0) ( t_{J_1 + 1} + J_1 )), \rho_{2} (w( t_{J_{1} + 1} , J_{1} )) (k - J_{1}) ) .
\end{align*}
From $\rho_2 (w ( t_{J_1 + 1} , J_1 )) \geq \rho_2 (w_0)$, we have
\begin{align*}
& w(t_k,k) \leq \beta ( \beta ( w_0, \rho_2 (w_0) ( t_{J_1 + 1} + J_1 )), \rho_{2} (w_0) (k - J_1) ) .
\end{align*}
By application of (\ref{eq:e82}), we have
\begin{align*}
& w(t_k,k) \leq \beta ( w_0, \rho(w_0) (t_{J_1 + 1} + k ) ).
\end{align*}
By reapplication of (\ref{eq:e82}) and the fact that $ t_{J_1 + 1} = \dots = t_{J_{1} + J_{2}} $, we get
\begin{align*}
& w(t_k,k) \leq \beta ( \beta ( w_0, \rho(w_0) t_{J_1 + 1}) , \rho(w_0) k ) = \tilde\beta(w_0,t_k,k)
\end{align*}
for all $(t_k,k) \in \dom \, w$ with $k \in \{ J_{1} + 1 , \dots , J_2 + J_1 \}$.

By repeated application of the above arguments (i.e. concatenating flows and jumps, the fact that $\rho_2 (w ( t,j )) \geq \rho_2 (w_0)$ and exploiting (\ref{eq:e82})) yield 
\begin{align} \nonumber
& w(t,j) \leq \tilde {\beta} (w_0 , t , j) \qquad\forall (t,j) \in \dom w .
\end{align}
It is easy to see that when $w$ starts with jumps the above arguments essentially hold. Eventually, the properties (\ref{eq:e24})-(\ref{eq:e29}) immediately follow from the very definition of $\tilde\beta$ and exploiting (\ref{eq:e82}). This completes the proof.
\end{proof}
\section{Proof of Lemma~\ref{L:2}} \label{Ap:E}

Partition $\dom z = \bigcup\limits_{k = 0}^{J-1} {\left( {\left[ {{t_k},{t_{k + 1}}} \right],k} \right)} \bigcup ( [t_{J} , t_{J+1}) , J ) $ with $t_{0} = 0$ and $(\tilde{t},\tilde{j}) := (t_{J+1},J)$. Define $(\hat{t},\hat{j})$ by
\begin{align}
(\hat{t},\hat{j}) := \min \big\{ & (t,j) \in \dom \, z \colon   z(t,j) \leq \norm{v_{(t,j)}} _\infty \,\,\mathrm{and} \,\, t+j < \tilde{t}+\tilde{j} \big\} \nonumber
\end{align}
By convention, if $ z(t,j) > \norm{v_{(t,j)}}_{\infty}$ for all $(t,j) \in \dom \, z$, we let $(\hat{t},\hat{j}) := (\tilde{t},\tilde{j})$. \\
Nonincreasing property of $z(t,j)$ immediately follows from~(\ref{eq:e6}) and~(\ref{eq:e7}). Therefore, we get for all $(t,j) \in \mathrm{dom} \, z$ such that $(t,j) \succeq (\hat{t},\hat{j})$
\begin{align} \label{eq:e28}
z(t,j) \leq \norm{v_{(t,j)}}_{\infty} .
\end{align}
Thus (\ref{eq:e8}) holds for all $(t,j) \in \mathrm{dom} \, z$ such that $(t,j) \succeq (\hat{t},\hat{j})$. 

Pick any $(t,j) \in \mathrm{dom} \, z$ such that $\left( {t,j} \right) \prec (\hat{t},\hat{j})$. We have that $z(t,j) > \abs{v_{(t,j)}}_\infty \geq v(\tau ,i)$ for all $(t,j),(\tau,i) \in \mathrm{dom} \, z$ such that $(0,0) \preceq (\tau ,i) \preceq (t,j)$. From the fact that $z(0,0) \geq 0$ and the fact that $z(t,j)$ is non-increasing, we have
\begin{equation} \label{eq:e26}
0 \leq z(\tau,i) \leq z(\tau,i) + v(\tau,i) \leq 2z(\tau,i)
\end{equation}
for all $(\tau,i) \in \mathrm{dom} \, z$ with $(0,0) \preceq (\tau,i) \preceq (t,j)$. \\
Combining (\ref{eq:e6}) and (\ref{eq:e7}) with the fact that $z(t,j) + v(t,j) \geq 0$ for all $(t,j) \prec (\hat{t},\hat{j})$ (cf. the second inequality of (\ref{eq:e26})) gives
\begin{itemize}
\item for almost all $t$ such that $(t,j) \in {\mathrm{dom}} \, z \backslash \Gamma(z)$ and $(t,j) \prec (\hat{t},\hat{j})$
\begin{align}
& \dot{z} (t,j) \leq - \rho (z(t,j)+v(t,j)) ; & \label{eq:e31}
\end{align}
\item for all $(t,j) \in \Gamma(z)$ such that $(t,j) \prec (\hat{t},\hat{j})$
\begin{align}
& z(t,j+1) - z(t,j) \leq  - \rho (z(t,j)+v(t,j)) . & \label{eq:e32}
\end{align}
\end{itemize}
Applying Lemma~\ref{L:1} to the right-hand sides of~(\ref{eq:e31}) and~(\ref{eq:e32}) (i.e. $\rho (\cdot) \geq \rho_1(\cdot) \rho_2(\cdot)$ for some $\rho_1 \in \mathcal{K}_\infty$ and $\rho_2 \in \mathcal{L}$) gives
\begin{itemize}
\item for almost all $t$ such that $(t,j) \in {\mathrm{dom}} \, z \backslash \Gamma(z)$ and $(t,j) \prec (\hat{t},\hat{j})$
\begin{align*}
& \dot{z} (t,j) \leq - \rho_1 (z(t,j)+v(t,j)) \rho_2 (z(t,j)+v(t,j)) ; & \nonumber
\end{align*}
\item for all $(t,j) \in \Gamma(z)$ such that $(t,j) \prec (\hat{t},\hat{j})$
\begin{align*}
& z(t,j+1) - z(t,j) \leq  - \rho_1 (z(t,j)+v(t,j)) \rho_2 (z(t,j)+v(t,j)) . & \nonumber 
\end{align*}
\end{itemize}
Exploiting the inequalities of (\ref{eq:e26}) and the monotonicity of $\rho_1$ and $\rho_2$ yields
\begin{itemize}
\item for almost all $t$ such that $(t,j) \in {\mathrm{dom}} \, z \backslash \Gamma(z)$ and $(t,j) \prec (\hat{t},\hat{j})$
\begin{align*}
& \dot{z} (t,j) \leq - \rho_1 \left( {z(t,j)} \right)\rho_2 \left( {2z(t,j)} \right) =: - \alpha(z(t,j)) ; &
\end{align*}
\item for all $(t,j) \in \Gamma(z)$ such that $(t,j) \prec (\hat{t},\hat{j})$
\begin{align*}
& z(t,j+1) - z(t,j) \leq - {\rho _1}\left( {z(t,j)} \right){\rho _2}\left( {2z(t,j)} \right) = - \alpha(z(t,j)) . &
\end{align*}
\end{itemize}
By application of Lemma \ref{L:7}, there exists $\tilde{\beta} \in \mathcal{KLL}$ such that
\begin{align} \label{eq:e27}
z(t,j) \leq \tilde{\beta} (z(0,0),t,j) 
\end{align}
for all $(t,j) \in \mathrm{dom} \, z$ with $(t,j) \prec (\hat{t},\hat{j})$. The combination of (\ref{eq:e27}) with (\ref{eq:e28}) completes the proof.
\section{Proof of Theorem~\ref{P:02}} \label{Ap:H}

Before proceeding to the proof, we make the following observation followed by two new notions.
\begin{remark} \label{D:01}
It should be pointed out that there is no loss of generality in working with $\KL$ functions rather than $\KLL$ functions (cf. \cite[Lemma 6.1]{Cai.2007} for more details).
Moreover, we note that $\max \{a,b,c\} \leq a+b+c \leq \max \{3a,3b,3c\}$ for all $a,b,c \in \Rp$.
Hence, $\H$ is pre-iISS with respect to $\A$ if and only if there exist $\alpha \in \Kinf$, $\gamma_1,\gamma_2 \in \K$ and $\beta \in \KL$ if for all $u \in \mathcal{L}_{\gamma_1,\gamma_2}^e$, all $\xi \in \X$, and all $(t,j) \in \dom x$, each solution pair $(x,u)$ to $\H$ satisfies
\begin{align} 
\alpha(\omega(x(t,j,\xi,u))) \leq & \max \Bigg\{ \beta (\omega(\xi),t+j) , \int_0^t \gamma_1 (\abs{u(s,i(s))}) \textmd{d}s , \!\!\!\!\!\!\!\!\!\!\!\!\sum_{\scriptsize{\begin{array}{c}
(t^\prime,j^\prime) \in \Gamma(u),\\
(0,0) \preceq (t^{\prime},j^{\prime}) \prec (t,j)\end{array}}} \!\!\!\!\!\!\!\!\!\!\!\!\gamma_2 (\abs{u(t^\prime,j^\prime)}) 
 \Bigg\} . \label{eq:er1}
\end{align}
\hfill$\Box$
\end{remark}
For the sake of convenience, here we prefer to use the max-type estimate \eqref{eq:er1} rather than \eqref{eq:e1}. The next two notions are required later.
\begin{definition} \label{D:02}
Let $\A \subset \X$ be a compact set, and $\sigma \colon \X \to \Rp$ be an admissible perturbation radius that is positive on $\X \backslash \A$.
Also, let $\omega$ be a proper indicator for $\A$ on $\X$.
The hybrid system $\H$ is said to be semiglobally practically robustly pre-integral input-to-state stable $($SPR-pre-iISS$)$ with respect to $\A$ if there exist $\alpha \in \Kinf$, $\beta \in \KL$, $\gamma_1,\gamma_2 \in \K$ such that for each pair of positive real numbers $(\varepsilon,r)$,
there exists $\delta^* \in (0,1)$ such that for any $\delta \in (0,\delta^*]$ each solution pair $(\overline{x},u)$ to $\H_{\delta \sigma}$, the $\delta \sigma$-perturbation of $\mathcal{H}$, exists for all $u \in \mathcal{L}_{\gamma_1,\gamma_2}^e(r)$, all $\overline{\xi} \in {\X}$ with $\omega(\overline{\xi}) \leq r$ and all $(t,j) \in \mathrm{dom}\,\overline{x}$, and also satisfies
\begin{align*}
\alpha(\omega(\overline{x}(t,j,\overline{\xi},u))) \leq & \max \Bigg\{ \beta (\omega(\overline{\xi}),t+j) , \int_0^t \gamma_1 (\abs{u(s,i(s))}) \textmd{d}s , \!\!\!\!\!\!\!\!\!\!\!\! \sum_{\scriptsize{\begin{array}{c}
(t^\prime,j^\prime) \in \Gamma(u),\\
(0,0) \preceq (t^{\prime},j^{\prime}) \prec (t,j)\end{array}}} \!\!\!\!\!\!\!\!\!\!\!\!\gamma_2 (\abs{u(t^\prime,j^\prime)}) 
 \Bigg\} + \varepsilon .
\end{align*}
\hfill$\Box$
\end{definition}
\begin{definition} \label{D:03}
Let $\A \subset {\X}$ be a compact set, and $\sigma \colon {\X} \to \Rp$ be an admissible perturbation radius that is positive on ${\X} \backslash \A$.
Also, let $\omega$ be a proper indicator for $\A$ on ${\X}$.
The hybrid system $\H$ is said to be SPR-pre-iISS with respect to $\A$ on finite time intervals if there exist $\alpha\in \Kinf$, $\gamma_1,\gamma_2 \in \K$ and $\beta \in \KL$ such that for each triple of positive real numbers $(T,\varepsilon,r)$,
there exists $\delta^* \in (0,1)$ such that for any $\delta \in (0,\delta^*]$ each solution pair $(\overline{x},u)$ to $\H_{\delta \sigma}$, the $\delta \sigma$-perturbation of $\H$, exists for all $u \in \mathcal{L}_{\gamma_1,\gamma_2}^e (r)$, all $\overline{\xi} \in {\X}$ with $\omega(\overline{\xi}) \leq r$ and all $(t,j) \in \dom\,\overline{x}$ with $t+j \leq T$, and also satisfies
\begin{align*}
\alpha(\omega(\overline{x}(t,j,\overline{\xi},u))) \leq & \max \Bigg\{ \beta (\omega(\overline{\xi}),t+j) , \int_0^t \gamma_1 (\abs{u(s,i(s))}) \textmd{d}s , \!\!\!\!\!\!\!\!\!\!\!\!  \sum_{\scriptsize{\begin{array}{c}
(t^\prime,j^\prime) \in \Gamma(u),\\
(0,0) \preceq (t^{\prime},j^{\prime}) \prec (t,j)\end{array}}} \!\!\!\!\!\!\!\!\!\!\!\!\gamma_2 (\abs{u(t^\prime,j^\prime)}) 
 \Bigg\} + \varepsilon .
\end{align*}
\hfill$\Box$
\end{definition}
Here are the steps of the proof: 1) we show that semiglobal practical robust pre-iISS on compact time intervals is equivalent to semiglobal practical robust pre-iISS on the semi-infinite interval (cf. Proposition~\ref{P:00} below\footnote{Without loss of generality, in this proposition, we assume that the length of hybrid time domain of interest is infinite.}); 2) we establish that if solutions of some inflated system can be made arbitrarily close on arbitrary compact time intervals to some solution of the original system when the original system is pre-iISS, then the inflated system is semiglobally practically robustly pre-iISS (cf. Proposition~\ref{P:01} below). 3) we show that semiglobal practical robust pre-iISS implies pre-iISS (cf. Proposition~\ref{T:00} below). 4) the combination of Proposition~\ref{P:01} and Proposition~\ref{T:00} provides what we need, that is to say, the existence of an inflated hybrid system remaining pre-iISS under sufficiently small perturbations when the original system is pre-iISS.

The first step provides a link between the last two definitions.
\begin{proposition} \label{P:00}
The following are equivalent
\begin{itemize}
    \item [A)] $\mathcal{H}$ is SPR-pre-iISS with respect to $\A$ on finite time intervals.
\item [B)] $\mathcal{H}$ is SPR-pre-iISS with respect to $\A$.
\end{itemize}
\end{proposition}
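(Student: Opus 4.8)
The equivalence has one trivial direction and one substantive one. The implication $(B)\Rightarrow(A)$ is immediate: if $\alpha,\beta,\gamma_1,\gamma_2$ witness SPR-pre-iISS with respect to $\A$ in the sense of Definition~\ref{D:02}, then for any triple $(T,\varepsilon,r)$ I simply invoke that definition with the pair $(\varepsilon,r)$, keep the resulting $\delta^*$, and observe that the estimate, which holds for all $(t,j)\in\dom\overline{x}$, holds a fortiori for those with $t+j\le T$, while existence for all hybrid times trivially yields existence up to hybrid time $T$. Hence the whole content is the converse $(A)\Rightarrow(B)$, whose difficulty is that the tolerance $\delta^*=\delta^*(T,\varepsilon,r)$ supplied by Definition~\ref{D:03} is allowed to shrink as $T\to+\infty$, and must be upgraded to a single $\delta^*=\delta^*(\varepsilon,r)$ valid for every hybrid time.

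My plan for $(A)\Rightarrow(B)$ is a fixed-window concatenation argument driven by the $\mathcal{L}$-decay of $\beta$ and by causality of hybrid solutions. By Remark~\ref{D:01} I may work with the max-type estimate and with $\beta\in\KL$, the hybrid time $t+j$ playing the role of a single time variable. Fix $(\varepsilon,r)$. Since every admissible input satisfies $\norm{u_{(t,j)}}_{\gamma_1,\gamma_2}<r$, the two energy terms in the estimate are always bounded by $r$; combined with a single application of (A) this produces an a priori reachability bound $M=M(\varepsilon,r)$ for $\omega$ along any solution started from $\omega(\xi)\le M$ over one window, together with a smaller end-of-window bound $m^*:=\alpha^{-1}(r+\varepsilon/2)$. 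Crucially, both $M$ and $m^*$ depend only on $(\varepsilon,r)$, not on the window length. I then pick the window length $T=T(\varepsilon,r)$ so large that $\beta(M,T)\le\varepsilon/2$, and I obtain the single tolerance $\delta^*:=\delta^*_{(A)}(T,\varepsilon/2,M)$ by applying (A) once, to this one $T$.

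With $\delta\le\delta^*$ fixed, I would run the iteration over checkpoints at hybrid times near $kT$. The restriction of a solution of $\H_{\delta\sigma}$ beyond a checkpoint is again a solution of $\H_{\delta\sigma}$ from the checkpoint state, driven by the correspondingly shifted input, whose energy is still $<r$; this causality lets me re-apply the single finite-interval estimate on each window. Non-escape on each window then propagates to non-escape for all hybrid time, giving the existence required by Definition~\ref{D:02}, while the checkpoint recursion $\alpha(m_{k+1})\le\max\{\beta(m_k,T),r\}+\varepsilon/2$ keeps the state inside the ball of radius $M$ and, through the contraction $\beta(s,T)\le\tfrac12\alpha(s)$ valid on the range $\alpha(s)\ge\varepsilon$ (a consequence of $\beta(M,T)\le\varepsilon/2$), drives $\alpha(m_k)$ geometrically down to the level fixed by $r+\varepsilon$. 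Assembling the per-window estimates, bounding the intra-window transient by $\beta(m_k,\cdot)\le\beta(m_k,0)$ and using the geometric decay of $m_k$ across windows, produces a new $\widehat{\beta}\in\KL$, genuinely decaying in $t+j$, for which the desired semi-infinite estimate holds with $\varepsilon$ absorbing the accumulated residual tolerances.

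The main obstacle is this final assembly: turning the family of window-local, max-type estimates into one honest $\KL$ bound on the entire time domain. Two points are delicate. First, the hybrid-time bookkeeping, namely ensuring that every $(t,j)$ lies within hybrid time $T$ of a checkpoint even though $t+j$ advances both continuously during flows and in unit steps at jumps, so that each point is actually covered by a window to which (A) applies. Second, securing true decay of the $\beta$-contribution rather than a merely bounded constant residual, which is precisely what forces the window length to be chosen so that $\beta$ contracts relative to $\alpha$ on the relevant range. Throughout I would invoke the monotonicity $\H_{\delta\sigma}\subseteq\H_{\delta'\sigma}$ for $\delta\le\delta'$ to guarantee that the single tolerance $\delta^*$ serves every smaller $\delta$ as well.
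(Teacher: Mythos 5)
Your identification of the trivial direction and your overall windowing strategy coincide with the paper's (whose proof, incidentally, swaps the labels A) and B) in its opening sentence but proves exactly the implication you call substantive). The genuine gap is in the final assembly --- precisely the step you flag as ``the main obstacle'' --- and it cannot be carried out in the form you propose. With windows of length $T$ and checkpoints at $kT$, a point strictly inside a window sits at elapsed time $\tau<T$ from its checkpoint, so the finite-interval estimate only gives $\alpha(\omega(\overline{x}))\leq\max\{\beta(m_k,\tau),E\}+\varepsilon/2$, and you bound the transient by $\beta(m_k,0)$. But your own recursion yields $\alpha(m_k)\leq\max\{2^{-k}\alpha(m_0),E\}+\varepsilon$, where $E$ is the accumulated energy, so $\beta(m_k,0)\leq\beta\bigl(\alpha^{-1}(\max\{2^{-k}\alpha(m_0),E\}+\varepsilon),0\bigr)$. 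The composition $\beta(\alpha^{-1}(\cdot),0)$ is a genuine nonlinear amplification (in general $\beta(s,0)\gg\alpha(s)$): the resulting bound routes the energy through this gain and carries a floor $\beta(\alpha^{-1}(\varepsilon),0)$ that does not vanish as $k\to\infty$, and neither term is dominated by $\max\{\hat{\beta}(\omega(\overline{\xi}),t+j),E\}+\varepsilon$ --- in the required estimate the energy must appear unamplified and the residual must be absorbed by $\varepsilon$ itself. A second, independent problem: your $\hat{\beta}$ is built from $T=T(\varepsilon,r)$ and $M=M(\varepsilon,r)$, hence depends on $(\varepsilon,r)$, whereas Definition~\ref{D:02} quantifies $\alpha,\beta,\gamma_1,\gamma_2$ \emph{before} $(\varepsilon,r)$; a family $\hat{\beta}_{\varepsilon,r}$ does not verify that definition without a further, nontrivial uniformization argument.

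The paper's resolution shows that no new $\KL$ function is needed at all: apply the finite-interval hypothesis with horizon $2T$ (not $T$), keeping the checkpoints spaced $T$ apart, so that the windows overlap. Every hybrid time $t+j\geq T$ then lies at elapsed time at least $T$ from some checkpoint whose state remains in the semiglobal ball $\omega\leq\alpha^{-1}(r+\varepsilon)$ (this invariance is all the checkpoint recursion must deliver --- no geometric decay is used), and at such elapsed times the $\beta$-term has already dropped below $\varepsilon/2$; hence for $t+j\geq T$ one obtains the pure bound $\alpha(\omega(\overline{x}))\leq\max\{\text{energy terms}\}+\varepsilon$ with no transient left to absorb, while for $t+j<T$ the initial window $[0,2T]$ gives the desired inequality with the \emph{original} $\beta$. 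Combining the two cases verifies Definition~\ref{D:02} verbatim, with the same gains as in the hypothesis. If you replace your length-$T$ windows by these overlapping length-$2T$ windows, your argument closes; as written, it does not.
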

\begin{proof} The implication $A) \Rightarrow B)$ is clear. To establish the implication $B) \Rightarrow A)$, let the gain functions $\alpha$, $\beta$, $\gamma_1$ and $\gamma_2$ come from Remark~\ref{D:01}. Take arbitrary strictly positive $\varepsilon,r$, and let $T > 0$ be sufficiently large such that
\begin{align} \label{eq:r7}
\beta \left( \max \{ r , r + \varepsilon , \alpha^{-1} (r + \varepsilon) \} , s \right) \leq \frac{\varepsilon}{2} \qquad\qquad \forall s \in [ T , \infty ) .
\end{align}
Let $\delta^* \in (0,1)$ come from the assumption of SPR-pre-iISS on finite time intervals, corresponding to the values $(2T,\frac{\varepsilon}{2},\max \left\{ r , r + \varepsilon , \alpha^{-1} (r + \varepsilon) \right\})$. Let $\delta$ be fixed but arbitrary with $\delta \in (0,\delta^*]$. So for all $u \in \mathcal{L}_{\gamma_1,\gamma_2}^e (r)$, for all $\overline{\xi} \in {\X}$ with $\omega(\overline{\xi}) \leq \max \left\{ r , r + \varepsilon , \alpha^{-1}(r + \varepsilon) \right\}$ and for all $(t,j) \in \mathrm{dom}\,\overline{x}$ with $t+j \leq 2T$, each solution pair $(\overline{x},u)$ to $\H_{\delta \sigma}$ exists and satisfies
\begin{align}
\alpha(\omega(\overline{x}(t,j,\overline{\xi},u))) \leq & \max \Bigg\{ \beta (\omega(\overline{\xi}),t+j), \int_0^t \gamma_1 (\abs{u(s,i(s))}) \textmd{d}s ,\!\!\!\!\!\!\!\!\!\!\!\! \sum_{\scriptsize{\begin{array}{c}
(t^\prime,j^\prime) \in \Gamma(u),\\
(0,0) \preceq (t^{\prime},j^{\prime}) \prec (t,j)\end{array}}} \!\!\!\!\!\!\!\!\!\!\!\!\!\!\gamma_2 (\abs{u(t^\prime,j^\prime)})  \Bigg\} + \frac{\varepsilon}{2} . & \nonumber
\end{align}
Let $(t_{kT},j_{kT}) := (t,j)$ with $t + j = kT , k =0,1,2,\dots$ and $(t,j) \in \mathrm{dom}\,\overline{x}$. It follows with the fact that $\omega(\overline{\xi}) \leq \max \left\{ r,r + \varepsilon , \alpha^{-1}(r + \varepsilon) \right\}$, the fact that $u \in \mathcal{L}_{\gamma_1,\gamma_2}^e (r)$, and the choice of $T$ (cf. (\ref{eq:r7})) that
\begin{align} 
\alpha(\omega(\overline{x} (t_T,j_T,\overline{\xi},u))) \leq & \max \Bigg\{ \beta (\omega(\overline{\xi}),T), \int_0^{t_T} \gamma_1 (\abs{u(s,i(s))}) \textmd{d}s , \!\!\!\!\!\!\!\!\!\!\!\!\!\!\!\!\sum_{\scriptsize{\begin{array}{c}
(t^\prime,j^\prime) \in \Gamma(u),\\
(0,0) \preceq (t^{\prime},j^{\prime}) \prec (t_T,j_T)\end{array}}} \!\!\!\!\!\!\!\!\!\!\!\!\!\!\!\!\!\!\! \gamma_2 (\abs{u(t^\prime,j^\prime)}) \Bigg\} + \frac{\varepsilon}{2} & \nonumber \\
\leq & \max \Big\{ \beta (\max \{r,r+\varepsilon , \alpha^{-1} (r + \varepsilon) \},T),r \Big\} + \frac{\varepsilon}{2} & \nonumber \\
\leq & r + \varepsilon .  \label{eq:r2}
\end{align}
It follows from (\ref{eq:r2}) that the following holds
\begin{align} \label{eq:r20}
\omega(\overline{x}(t_T,j_T,\overline{\xi},u)) \leq \alpha^{-1} (r + \varepsilon) \leq \max \left\{ r , r + \varepsilon , \alpha^{-1} (r + \varepsilon) \right\}
\end{align}
Exploiting the semigroup property of solutions, (\ref{eq:r20}) and the fact that $u \in \mathcal{L}_{\gamma_1,\gamma_2}^e (r)$, and the choice of $\delta$, the solution pair $(\overline{x},u)$ to $\H_{\delta \sigma}$ with the initial value $\overline{x}(t_T,j_T,\overline{\xi},u)$ exists for all $(t,j) \in \dom \,\overline{x}$ with $T \leq t+j \leq 3T$ and it also satisfies
\begin{align*} 
\alpha(\omega(\overline{x}(t,j,\overline{x}(t_T,j_T,\overline{\xi},u),u))) \leq & \max \Bigg\{ \beta (\omega(\overline{x}(t_T,j_T,\overline{\xi},u)),t+j) , \int_{t_T}^t \gamma_1 (\abs{u(s,i(s))}) \mathrm{d}s , \nonumber \\
& \qquad\quad  \!\!\!\!\!\!\!\!\!\!\!\!\!\!\!\!{\color{black} \sum_{\scriptsize{\begin{array}{c}
(t^\prime,j^\prime) \in \Gamma(u),\\
(t_T,j_T) \preceq (t^{\prime},j^{\prime}) \prec (t,j)\end{array}}} \!\!\!\!\!\!\!\!\!\!\!\!\!\!\!\!\gamma_2 (\abs{u(t^\prime,j^\prime)}) } \Bigg\} + \frac{\varepsilon}{2} .
\end{align*}
Again it follows from (\ref{eq:r20}), the fact that $u \in \mathcal{L}_{\gamma_1,\gamma_2}^e (r)$, and (\ref{eq:r7}) that
\begin{align*}
\alpha(\omega(\overline{x}(t_{2T},j_{2T},\overline{x}(t_{T},j_{T},\overline{\xi},u),u))) \leq & \max \Bigg\{ \beta (\alpha^{-1} (r + \varepsilon),2T), \int_{t_T}^{t_{2T}} \gamma_1 (\abs{u(s,i(s))}) \textmd{d}s, \nonumber \\
&  \quad\qquad \!\!\!\!\!\!\!\!\!\!\!\!\!\!\!\!{\color{black} \sum_{\scriptsize{\begin{array}{c}
(t^\prime,j^\prime) \in \Gamma(u),\\
(t_T,j_T) \preceq (t^{\prime},j^{\prime}) \prec (t_{2T},j_{2T})\end{array}}} \!\!\!\!\!\!\!\!\!\!\!\!\!\!\!\!\gamma_2 (\abs{u(t^\prime,j^\prime)}) } \Bigg\} + \frac{\varepsilon}{2} & \nonumber \\
\leq & \max \left\{ \beta (\max \{r,r+\varepsilon , \alpha^{-1} (r + \varepsilon) \},2T),r \right\} + \frac{\varepsilon}{2} & \nonumber \\
\leq & r + \varepsilon . &
\end{align*}
By repeating this procedure, the following holds for all $(t,j) \in \mathrm{dom}\,\overline{x}$ with $kT \leq t+j \leq (k+2)T,k=2,3,4,\dots$
\begin{align*}
\alpha(\omega(\overline{x}(t,j,\overline{x}(t_{kT},j_{kT},\overline{\xi},u),u))) \leq & \max \Bigg\{ \beta (\omega(\overline{x}(t_{kT},j_{kT},\overline{\xi},u)),t+j) , \int_{t_{kT}}^t \gamma_1 (\abs{u(s,i(s))}) \mathrm{d}s , \nonumber \\
& \qquad\quad \!\!\!\!\!\!\!\!\!\!\!\!\!\!\!\!{\color{black} \sum_{\scriptsize{\begin{array}{c}
(t^\prime,j^\prime) \in \Gamma(u),\\
(t_{kT},j_{kT}) \preceq (t^{\prime},j^{\prime}) \prec (t,j)\end{array}}} \!\!\!\!\!\!\!\!\!\!\!\!\!\!\!\!\gamma_2 (\abs{u(t^\prime,j^\prime)}) } \Bigg\} + \frac{\varepsilon}{2} \nonumber \\
\leq & \max \Bigg\{ \int_{t_{kT}}^t \gamma_1 (\abs{u(s,i(s))}) \mathrm{d}s , \!\!\!\!\!\!\!\!\!\!\!\!\!\!\!\!\!\!\!\!\!\!{\color{black} \sum_{\scriptsize{\begin{array}{c}
(t^\prime,j^\prime) \in \Gamma(u),\\
(t_{kT},j_{kT}) \preceq (t^{\prime},j^{\prime}) \prec (t,j)\end{array}}} \!\!\!\!\!\!\!\!\!\!\!\!\!\!\!\!\!\!\!\!\!\gamma_2 (\abs{u(t^\prime,j^\prime)}) } \Bigg\} + \varepsilon . &
\end{align*}
So we have for all $\overline{\xi} \in {\X}$ with $\omega(\overline{\xi}) \leq \max \left\{ r , r + \varepsilon , \alpha^{-1}(r + \varepsilon) \right\}$, for all $u \in \mathcal{L}_{\gamma_1,\gamma_2}^e(r)$, and for all $(t,j) \in \mathrm{dom}\,\overline{x}$ with $t+j \geq T$
\begin{align} \label{eq:r4}
\alpha(\omega(\overline{x}(t,j,\overline{\xi},u))) \leq & \max \Bigg\{ \int_0^t \gamma_1 (\abs{u(s,i(s))}) \mathrm{d}s , \!\!\!\!\!\!\!\!\!\!\!\!\!\!\!\!{ \sum_{\scriptsize{\begin{array}{c}
(t^\prime,j^\prime) \in \Gamma(u),\\
(0,0) \preceq (t^{\prime},j^{\prime}) \prec (t,j)\end{array}}} \!\!\!\!\!\!\!\!\!\!\!\!\!\!\!\!\gamma_2 (\abs{u(t^\prime,j^\prime)})} \Bigg\} + \varepsilon . &
\end{align}
We also get for all $\overline{\xi} \in {\X}$ with $\omega(\overline{\xi}) \leq \max \left\{ r , r + \varepsilon , \alpha^{-1}(r + \varepsilon) \right\}$, for all $u \in \mathcal{L}_{\gamma_1,\gamma_2}^e(r)$, and for all $(t,j) \in \dom \overline{x}$ with $0 \leq t+j < T$
\begin{align}
\alpha(\omega(\overline{x}(t,j,\overline{\xi},u))) \leq & \max \Bigg\{ \beta (\omega(\overline{\xi}),t+j), \int_0^t \gamma_1 (\abs{u(s,i(s))}) \textrm{d}s , \!\!\!\!\!\!\!\!\!\!\!\!\!\!\!\!{\color{black} \sum_{\scriptsize{\begin{array}{c}
(t^\prime,j^\prime) \in \Gamma(u),\\
(0,0) \preceq (t^{\prime},j^{\prime}) \prec (t,j)\end{array}}} \!\!\!\!\!\!\!\!\!\!\!\!\!\!\!\!\gamma_2 (\abs{u(t^\prime,j^\prime)}) } \Bigg\} + \frac{\varepsilon}{2} . & \label{eq:r5}
\end{align}
Combining (\ref{eq:r4}) and (\ref{eq:r5}) gives
\begin{align*}
\alpha(\omega(\overline{x}(t,j,\overline{\xi},u))) \leq & \max \left\{ \beta (\omega(\overline{\xi}),t+j), \norm{u_{(t,j)}}_{\gamma_1,\gamma_2} \right\} + \varepsilon & 
\end{align*}
which completes the proof. 
\end{proof}
The following concepts, borrowed from \cite{Goebel.2012}, are required to give Proposition~\ref{P:01}.
\begin{definition}
Two hybrid signals $x \colon \textrm{dom } x \to \Rn$ and $y \colon \dom y \to \Rn$ are said to be ($T,\varepsilon$)-close if
\begin{enumerate}
    \item for each $(t,j) \in \textrm{dom }x$ with $t + j \leq T$ there exists $s$ such that $(s,j) \in \dom y$, with $\abs{t - s} \leq \varepsilon$ and $\abs{x(t, j) - y(s, j)} \leq \varepsilon$;
\item for each $(t,j) \in \textrm{dom }y$ with $t + j \leq T$ there exists $s$ such that $(s,j) \in \textrm{dom }x$, with $\abs{t - s} \leq \varepsilon$ and $\abs{x(t, j) - y(s, j)} \leq \varepsilon$.
\end{enumerate}
\hfill$\Box$
\end{definition}
\begin{definition} \label{D:04}
(Reachable Sets) Given an arbitrary compact set $K_0 \subset {\X}$ and $T \in \Rp$, the reachable set from $K_0$ in hybrid time less or equal to $T$ is the set
\begin{align*}
\mathcal{R}_{\leq T} (K_0) = \{ x (t,j,\xi,u) \colon x \in \varrho_\H (\xi) , \xi \in K_0 , t+j \leq T \} .
\end{align*}
\hfill$\Box$
\end{definition}
We now give a result stating that if solutions to $\mathcal{H}$ and solutions to $\mathcal{H}_{\delta \sigma}$, the $\delta \sigma$ perturbation of $\mathcal{H}$, are ($T,\varepsilon$)-close when $\mathcal{H}$ is pre-iISS, then the system $\mathcal{H}$ is SPR-pre-iISS.
\begin{proposition} \label{P:01}
Let $\A \subset {\X}$ be a compact set and $\sigma \colon {\X} \to \Rp$ be an admissible perturbation radius that is positive on ${\X} \backslash \A$. Also, let $\omega$ be a proper indicator for $\A$ on ${\X}$. Assume that the following conditions hold
\begin{itemize}
    \item[(\emph{a})] $\H$ is pre-iISS with respect to $\A$.
	 \item[(\emph{b})] For each triple $(T,\tilde{\varepsilon},r)$ of positive real numbers there exists some $\delta \in (0,1)$ such that each solution pair $(\overline{x},u)$ to $\H_{\delta \sigma}$, the $\delta \sigma$-perturbation of $\H$, with $\omega(\overline{\xi}) \leq r + \delta$ and $u \in \mathcal{L}_{\gamma_1,\gamma_2}^e (r)$ there exist a solution pair $(x,w)$ to $\H$ with $\omega(\xi) \leq r$, and $\norm{w_{(s,j)}}_{\gamma_1,\gamma_2} \leq \norm{u_{(t,j)}}_{\gamma_1,\gamma_2}$ for all $\abs{t-s} \leq \tilde{\varepsilon}$, $(s,j) \in \dom \, w$ and $(t,j) \in \dom \, u$ such that $\overline{x}$ and $x$ are $(T,\tilde{\varepsilon})$-close. 
\end{itemize}
Then $\mathcal{H}$ is SPR-pre-iISS with respect to $\A$.
\end{proposition}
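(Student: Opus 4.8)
The plan is to reduce, via Proposition~\ref{P:00}, the claim to the \emph{finite-time} version of SPR-pre-iISS (Definition~\ref{D:03}): it suffices to exhibit $\alpha\in\Kinf$, $\beta\in\KL$, $\gamma_1,\gamma_2\in\K$ so that for every triple $(T,\varepsilon,r)$ there is $\delta^*$ making the finite-time estimate hold for all $\delta\in(0,\delta^*]$. I would take $\alpha,\beta,\gamma_1,\gamma_2$ to be exactly the functions furnished by the max-type reformulation of pre-iISS of $\H$ in Remark~\ref{D:01}, doubling $\gamma_1,\gamma_2$ so that a \emph{sum} of input terms is dominated by a \emph{max} of them. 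The point is that these gains are fixed once and for all, while only $\delta^*$ (equivalently the closeness parameter $\tilde{\varepsilon}$) is allowed to depend on $(T,\varepsilon,r)$.

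First I would fix $(T,\varepsilon,r)$ and record a compactness fact: by the max-type estimate applied to solutions of $\H$ with $\omega(\xi)\le r$ and $u\in\mathcal{L}_{\gamma_1,\gamma_2}^e(r)$, one has $\alpha(\omega(x(s,j)))\le\max\{\beta(r,0),r\}$, so every such solution stays in the compact sublevel set $K:=\{\xi\in\X:\omega(\xi)\le R\}$ with $R:=\alpha^{-1}(\max\{\beta(r,0),r\})$, where $K$ is compact because $\omega$ is a proper indicator. Any $\H_{\delta\sigma}$-solution that is $(T,\tilde{\varepsilon})$-close to such an $x$ then lives in a fixed compact neighborhood $K'$ of $K$. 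On $K'$ and on the compact box $[0,R]\times[0,T+1]$, the maps $\alpha\circ\omega$, $\omega$, and $\beta$ are uniformly continuous. I would use this to pick $\tilde{\varepsilon}>0$ so small that (i) $|\bar{x}-x|\le\tilde{\varepsilon}$ forces $\alpha(\omega(\bar{x}))\le\alpha(\omega(x))+\varepsilon/2$, and (ii) a simultaneous perturbation of the first argument of $\beta$ by at most $\rho(\tilde{\varepsilon})$ (the modulus governing $|\omega(\xi)-\omega(\bar{\xi})|$ through the closeness of initial states) and of its second argument by at most $\tilde{\varepsilon}$ changes $\beta$ by at most $\varepsilon/2$. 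Feeding this $\tilde{\varepsilon}$ into hypothesis~(\emph{b}) with the triple $(T,\tilde{\varepsilon},r)$ produces the desired $\delta$, hence $\delta^*$.

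With the gains and $\delta$ fixed, I would close the estimate pointwise. Given a solution pair $(\bar{x},u)$ to $\H_{\delta\sigma}$ with $\omega(\bar{\xi})\le r$, $u\in\mathcal{L}_{\gamma_1,\gamma_2}^e(r)$, and any $(t,j)\in\dom\bar{x}$ with $t+j\le T$, hypothesis~(\emph{b}) supplies a pair $(x,w)$ for $\H$ with $\omega(\xi)\le r$ and $\norm{w_{(s,j)}}_{\gamma_1,\gamma_2}\le\norm{u_{(t,j)}}_{\gamma_1,\gamma_2}$, which is $(T,\tilde{\varepsilon})$-close to $\bar{x}$. Choosing the matching $(s,j)$ (same $j$, $|t-s|\le\tilde{\varepsilon}$, $|\bar{x}(t,j)-x(s,j)|\le\tilde{\varepsilon}$) I would chain
\[
\alpha(\omega(\bar{x}(t,j)))\le\alpha(\omega(x(s,j)))+\tfrac{\varepsilon}{2}\le\max\Big\{\beta(\omega(\xi),s+j),\ \textstyle\int_0^s\gamma_1,\ \sum\gamma_2\Big\}+\tfrac{\varepsilon}{2},
\]
using uniform continuity for the first inequality and the max-type pre-iISS estimate for $x$ for the second. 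The input terms are dominated by those of $u$ because $\norm{w_{(s,j)}}_{\gamma_1,\gamma_2}\le\norm{u_{(t,j)}}_{\gamma_1,\gamma_2}$ together with the doubling of $\gamma_1,\gamma_2$, while the transient term is absorbed by $\beta(\omega(\xi),s+j)\le\beta(\omega(\bar{\xi}),t+j)+\varepsilon/2$ from step~(ii). This yields exactly the finite-time SPR estimate with margin $\varepsilon$, and Proposition~\ref{P:00} upgrades it to SPR-pre-iISS.

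The step I expect to be the main obstacle is controlling the transient term $\beta(\omega(\xi),s+j)$ against $\beta(\omega(\bar{\xi}),t+j)$. Two distinct errors must be tamed simultaneously: the hybrid-time reparametrization $|s-t|\le\tilde{\varepsilon}$ coming from $(T,\tilde{\varepsilon})$-closeness, and the mismatch $|\omega(\xi)-\omega(\bar{\xi})|$ between the initial states of the nearby $\H$-solution and the given $\H_{\delta\sigma}$-solution. Both are made uniformly small only after confining all solutions to the compact reachable set $K'$, which is precisely what the bounded-energy restriction $u\in\mathcal{L}_{\gamma_1,\gamma_2}^e(r)$ buys us, and then invoking uniform continuity of $\beta$ and $\omega$ there; the $+\varepsilon$ slack built into the SPR definition is exactly what lets these two continuity errors be absorbed without having to modify the universal gain $\beta$.
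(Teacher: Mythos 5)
Your proposal is correct and follows essentially the same route as the paper's proof: reduce to the finite-time version via Proposition~\ref{P:00}, confine all relevant solutions to a compact (reachable) set, use uniform continuity of $\alpha\circ\omega$, $\omega$ and $\beta$ there to choose the closeness parameter $\tilde{\varepsilon}$, feed that $\tilde{\varepsilon}$ into hypothesis~(\emph{b}) to obtain $\delta$, and then chain the closeness estimate, the pre-iISS estimate for the nearby $\H$-solution, and the input-norm domination, absorbing the continuity errors into the $+\varepsilon$ slack. The remaining differences are cosmetic: you budget the error as $\varepsilon/2+\varepsilon/2$ where the paper uses $\varepsilon/4$ increments, and you derive compactness directly from the iISS sublevel-set bound where the paper cites a compactness-of-reachable-sets lemma from the hybrid systems literature.
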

\begin{proof} This is proved using steps in the proof of \cite[Proposition 3]{Wang.2012}.
From the result of Proposition~\ref{P:00}, we only need to show that $\H$ is SPR-pre-iISS with respect to $\A$ on finite time intervals.
Assume that $\sigma \colon {\X} \to \Rp$ is an admissible perturbation radius that is positive on $\xi \in {\X} \backslash \A$.
Let $\omega$ be a proper indicator for $\A$ on ${\X}$.
Also, let the functions $\alpha\in\Kinf$, $\beta\in \KL$ and $\gamma_1,\gamma_2 \in \K$ come from Definition~\ref{D:03}.
Let the triple of $(T,\varepsilon,r)$ be given.
Let $K_0 := \left\{ \xi \in {\X} \colon \omega (\xi) \leq r \right\}$.
It is clear that $K_0$ is a compact set.
Let $\mathcal{R}_{\leq T} (K_0)$ be the reachable set from $K_0$ for $\H$.
It follows from \cite[Lemma 6.16]{Goebel.2012} that the set $\mathcal{R}_{\leq T} (K_0)$ is compact because $\H$ is pre-iISS.
Using the continuity of $\omega$ and $\beta$, and the fact that $\beta(s,l) \to 0$ as $l \to +\infty$, let $\tilde{\varepsilon}_1 > 0$ be sufficiently small so that
\begin{align*}
\beta(s,l-\tilde{\varepsilon}_1) - \beta(s,l) \leq \frac{\varepsilon}{4} \qquad \forall s \leq r , l \geq 0 .
\end{align*}
By convention, $l = l - \tilde{\varepsilon}_{1}$ if $l - \tilde{\varepsilon}_{1} < 0$.

Let $\tilde{\varepsilon}_{2}$ be small enough such that for all $x \in \mathcal{R}_{\leq T} (K_0)$ and $\overline{x} \in \mathcal{R}_{\leq T} (K_0 + \tilde{\varepsilon}_2 \overline{\mathbb{B}})$ satisfying $\abs{x - \overline{x}} \leq \tilde{\varepsilon}_2$ we have
\begin{align*}
& \alpha(\omega(\overline{x})) \leq \alpha(\omega(x)) + \frac{\varepsilon}{4} , \\
& \beta(\omega(x),l) \leq \beta(\omega(\overline{x}),l) + \frac{\varepsilon}{4} \qquad l \geq 0 . 
\end{align*}
Let $\tilde{\varepsilon} := \min \{ \tilde{\varepsilon}_1 , \tilde{\varepsilon}_2 \}$.
Let the data $(T,\tilde{\varepsilon},r)$ generate $\delta > 0$ from the item (b) of Proposition~\ref{P:01}.
From this item, for each solution pair $(\overline{x},u)$ to $\mathcal{H}_{\delta \sigma}$ with $\overline{\xi} \in (K_0 + \delta \overline{\mathbb{B}})$ and $u \in \mathcal{L}_{\gamma_{1},\gamma_{2}}^e (r)$ there exists some solution pair $(x,w)$ to $\mathcal{H}$ with $\xi \in K_{0}$ and $\norm{w_{(s,j)}}_{\gamma_{1},\gamma_{2}} \leq \norm{u_{(t,j)}}_{\gamma_{1},\gamma_{2}}$ for all $\abs{t-s} \leq \tilde{\varepsilon}$, $(s,j) \in \textrm{dom }w$ and $(t,j) \in \textrm{dom }u$ such that $\overline{x}$ and $x$ are $(T,\tilde{\varepsilon})$-close.
It follows from the item (a) of Proposition~\ref{P:01} and the definition of $\tilde{\varepsilon}$ that for all $(t,j) \in \mathrm{dom}\,\overline{x}$ with $t+j \leq T$, each solution pair $(\overline{x},u)$ to $\mathcal{H}_{\delta \sigma}$ with $\overline{\xi} \in (K_{0}+\delta \overline{\mathbb{B}})$ and $u \in \mathcal{L}_{\gamma_{1},\gamma_2}^e (r)$ satisfies
\begin{align*}
\alpha(\omega(\overline{x}(t,j,\overline{\xi},u))) & \leq \alpha(\omega(x(s,j,\xi,w))) + \frac{\varepsilon}{4} & \nonumber \\
& \leq \max \Bigg\{ \beta (\omega(\xi),t+j-\tilde{\varepsilon}) , \int_0^s \gamma_1 (\abs{w(\tau,i(\tau))}) \mathrm{d}\tau , \!\!\!\!\!\!\!\!\!\!\!\!\!\!\!\!\!\!\!\!{ \sum_{\scriptsize{\begin{array}{c}
(t^\prime,j^\prime) \in \Gamma(w),\\
(0,0) \preceq (t^{\prime},j^{\prime}) \prec (s,j)\end{array}}} \!\!\!\!\!\!\!\!\!\!\!\!\!\!\!\!\!\!\!\!\!\gamma_2 (\abs{w(t^\prime,j^\prime)}) } \Bigg\} + \frac{\varepsilon}{4} & \nonumber \\
& \leq \max \Bigg\{ \beta (\omega(\xi),t+j) , \int_0^s \gamma_1 (\abs{w(\tau,i(\tau))}) \mathrm{d}\tau , \!\!\!\!\!\!\!\!\!\!\!\!\!\!\!\!{ \sum_{\scriptsize{\begin{array}{c}
(t^\prime,j^\prime) \in \Gamma(w),\\
(0,0) \preceq (t^{\prime},j^{\prime}) \prec (s,j)\end{array}}} \!\!\!\!\!\!\!\!\!\!\!\!\!\!\!\!\gamma_2 (\abs{w(t^\prime,j^\prime)}) } \Bigg\} + \frac{\varepsilon}{2} & \nonumber \\
& \leq \max \Bigg\{ \beta (\omega(\overline{\xi}),t+j) , \int_0^t \gamma_1 (\abs{u(\tau,i(\tau))}) \mathrm{d}\tau , \!\!\!\!\!\!\!\!\!\!\!\!\!\!\!\!{ \sum_{\scriptsize{\begin{array}{c}
(t^\prime,j^\prime) \in \Gamma(u),\\
(0,0) \preceq (t^{\prime},j^{\prime}) \prec (t,j)\end{array}}} \!\!\!\!\!\!\!\!\!\!\!\!\!\!\!\!\gamma_2 (\abs{u(t^\prime,j^\prime)}) } \Bigg\} + \varepsilon . &
\end{align*}
This completes the poof. 
\end{proof}

\begin{remark}
The condition ($\mathrm{b}$) of Proposition~\ref{P:01} is not restrictive. With same augments as those in proof of \cite[Proposition 1]{Wang.2012}, one can provide sufficient conditions under which the condition ($\mathrm{b}$) of Proposition~\ref{P:01} holds. In particular, pre-iISS together with the Standing Assumptions are enough to get the desired property.
\hfill$\Box$
\end{remark}

Now we pass from semiglobal results to global results. The following theorem shows that semiglobal practical robust pre-iISS implies pre-iISS. 
\begin{proposition}\label{T:00}
Let $\A \subset {\X}$ be a compact set.
Assume that the hybrid system $\H$ is SPR-pre-iISS with respect to $\A$.
There exists an admissible perturbation radius $\sigma_2 \colon \X \to \Rp$ that is positive on $\X \backslash \A$ such that the hybrid system $\H_{\sigma_2}$, the $\sigma_2$-perturbation of $\H$, is pre-iISS with respect to $\A$.
\end{proposition}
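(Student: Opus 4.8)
The plan is to turn the \emph{semiglobal practical} robustness of $\H$ into \emph{global non-practical} pre-iISS of a single perturbed system $\H_{\sigma_2}$ by shrinking the reference radius $\sigma$ in a \emph{state-dependent} way, i.e. by making $\sigma_2(\xi)$ small precisely where $\omega(\xi)$ is large. The structural fact I would rely on throughout is the monotonicity of the inflation in the radius: if $\sigma_2(\xi)\le\delta\,\sigma(\xi)$ for all $\xi$ in some region, then on that region $f_{\sigma_2}\subseteq f_{\delta\sigma}$, $g_{\sigma_2}\subseteq g_{\delta\sigma}$, $\C_{\sigma_2}\subseteq\C_{\delta\sigma}$ and $\D_{\sigma_2}\subseteq\D_{\delta\sigma}$ (this is immediate from $\xi+\sigma_2(\xi)\overline{\mathbb{B}}\subseteq\xi+\delta\sigma(\xi)\overline{\mathbb{B}}$ in the definitions of $f_\sigma,g_\sigma,\C_\sigma,\D_\sigma$). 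Consequently, every solution pair of $\H_{\sigma_2}$ whose state remains in that region is also a solution pair of $\H_{\delta\sigma}$, and hence inherits the estimate of Definition~\ref{D:02} with the residual $\varepsilon$ attached to that particular $\delta$.

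First I would fix the gains $\alpha,\beta,\gamma_1,\gamma_2$ and the reference radius $\sigma$ supplied by SPR-pre-iISS, and (appealing to Proposition~\ref{P:00}) work with the finite-time form. Then I would choose strictly increasing levels $r_k\to+\infty$ and residuals $\varepsilon_k\to0$, and for each $k$ take $\delta_k:=\delta^*(\varepsilon_k,r_{k+1})\in(0,1)$ from SPR, chosen non-increasing. Using continuity of $\omega$, a standard continuous envelope (partition-of-unity) construction yields a continuous $\sigma_2\colon\X\to\Rp$, positive on $\X\backslash\A$, with $\sigma_2(\xi)\le\delta_k\,\sigma(\xi)$ on each annulus $\{\xi:r_{k-1}\le\omega(\xi)\le r_k\}$; since $\delta_0$ is positive near $\A$ and the constraint on each annulus involves only its own $\delta_k$, letting $\delta_k\to0$ for large $\omega$ does \emph{not} force $\sigma_2\equiv0$ near $\A$. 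Because $\sigma_2\le\sigma$, the radius $\sigma_2$ is admissible.

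Next, given a solution pair $(\overline{x},u)$ of $\H_{\sigma_2}$ with $\omega(\xi)\le r$ and $u\in\mathcal{L}_{\gamma_1,\gamma_2}^e(r)$, I would run a per-band first-exit argument: while the trajectory stays in the annulus of index $k$ (where $\sigma_2\le\delta_k\sigma$ and $\omega\le r_{k+1}$), it is a $\H_{\delta_k\sigma}$-solution, so the max-type estimate with $+\varepsilon_k$ applies up to the first exit of that band. Choosing the levels well separated and the $\varepsilon_k$ small relative to the band gaps $\alpha(r_{k-1}),\alpha(r_k)$ controls the overshoot (keeping the trajectory from climbing past band $k{+}1$) and forces a zero-input trajectory to descend band by band. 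Stitching these local estimates through the semigroup property and passing $k\to\infty$ produces a single $\hat\beta\in\KL\subset\KLL$ together with the input gains $\gamma_1,\gamma_2$ and \emph{no} additive residual, which is exactly pre-iISS of $\H_{\sigma_2}$.

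The main obstacle is precisely the elimination of the additive $\varepsilon$, that is, upgrading the family of \emph{practical} semiglobal estimates to an exact $\KL/\KLL$ decay that converges to $\A$. This requires the careful matching of each $\varepsilon_k$ to the band geometry and the telescoping descent argument, and it is entangled with a second bookkeeping difficulty: verifying the solution-containment across hybrid time domains (including jumps, via $g_{\sigma_2}\subseteq g_{\delta_k\sigma}$) and ensuring the confinement holds \emph{jointly} in the state bound and the input-energy bound $r$, since in Definition~\ref{D:02} the single parameter $r$ couples initial-condition size and input energy. Compactness of reachable sets (as used for Proposition~\ref{P:01}) would be the tool I would use to make the overshoot and confinement estimates uniform on each band.
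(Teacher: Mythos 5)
Your overall skeleton---a state-dependent radius built band-by-band so that $\sigma_2\le\delta_k\,\sigma$ on annuli of $\omega$, containment of every band-confined solution of $\H_{\sigma_2}$ among the solutions of $\H_{\delta_k\sigma}$, a band-by-band descent, and a final stitching into a single $\KL$ bound---is exactly the paper's strategy. But your band geometry has a genuine gap, and it sits precisely at what you correctly call the main obstacle (removing the additive residual). You choose a \emph{one-sided} family of levels $r_k\to+\infty$ with residuals $\varepsilon_k\to 0$, paired as $\delta_k=\delta^*(\varepsilon_k,r_{k+1})$: the residuals vanish only as the bands move \emph{away} from $\A$, while the innermost region $\{\xi\colon\omega(\xi)\le r_0\}$ keeps the fixed residual $\varepsilon_0>0$. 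A zero-input solution descending through your bands therefore ends up in the innermost band, where the only available estimate is $\alpha(\omega(\overline{x}(t,j)))\le\beta(\cdot,t+j)+\varepsilon_0$; this confines the solution to the set $\{\xi\colon\alpha(\omega(\xi))\le\varepsilon_0\}$ and nothing forces convergence to $\A$. Hence no $\KL$ function of $(\omega(\overline{\xi}),t+j)$ decaying to zero can be extracted, and what your construction actually proves is a \emph{practical} pre-iISS property of $\H_{\sigma_2}$, not pre-iISS. (A secondary point: invoking Proposition~\ref{P:00} to pass to the finite-time form goes in the wrong direction---the hypothesis already is the semi-infinite-horizon property of Definition~\ref{D:02}, and using the finite-time version would force you to redo the time-stitching of Proposition~\ref{P:00} inside this proof.)

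The repair is the paper's band geometry: take a \emph{bi-infinite} sequence $\{r_m\}_{m\in\Z}$ with $r_{m+1}\ge 4\beta(r_m,0)\ge 4r_m>0$, $r_m\to 0$ as $m\to-\infty$ and $r_m\to+\infty$ as $m\to+\infty$, and pair band $m$ with the residual $r_{m-1}/2$, i.e.\ take $\delta_m$ from SPR-pre-iISS for the pair $(r_{m-1}/2,\,r_m)$; then pick $\sigma_2$ positive on $\X\backslash\A$ with $\sigma_2(\overline{\xi})\le\min\{\delta_{m-1},\delta_m,\delta_{m+1}\}\,\sigma(\overline{\xi})$ whenever $r_{m-1}\le\omega(\overline{\xi})\le r_m$. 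Because the residual attached to band $m$ is of the order of the \emph{next lower} level, every solution of $\H_{\sigma_2}$ with $\omega(\overline{\xi})\le r_m$ and $u\in\mathcal{L}_{\gamma_1,\gamma_2}^e(r_m)$ satisfies both an overshoot bound ($\alpha(\omega)\le r_{m+1}$ plus input terms, for all hybrid times, using $r_{m+1}\ge4\beta(r_m,0)$) and a descent bound ($\alpha(\omega)\le r_{m-1}$ plus input terms after some finite $\tau_m$); since the levels accumulate at $0$, the descent runs through infinitely many bands, and this is what yields genuine convergence to $\A$. The paper then packages these two bounds by defining $\tilde\beta(r,s)$ as the supremum, over all solutions with $\omega(\overline{\xi})\le r$, $u\in\mathcal{L}_{\gamma_1,\gamma_2}^e(r)$ and $t+j\ge s$, of $\alpha(\omega(\overline{x}(t,j)))$ minus the input terms, checks from the two bounds that $\tilde\beta$ has all properties of a $\KL$ function, and finally converts the additive estimate to the required form with a factor of $3$. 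No compactness of reachable sets is needed in this step; that tool belongs to Proposition~\ref{P:01}.
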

\begin{proof} Inspired by the proof of \cite[Lemma 7.19]{Goebel.2012}, we show the conclusion.
According to the SPR-pre-iISS property of $\H$, let $\omega$ be a proper indicator for a compact set $\A$ on ${\X}$.
Also, let $\sigma_1 \colon {\X} \to \Rp$ be an admissible perturbation radius that is positive on ${\X} \backslash \A$.
Moreover, let the gain functions $\alpha \in \Kinf$, $\beta \in \KL$ and $\gamma_1,\gamma_2 \in \K$ come from Definition~\ref{D:02}.
Pick a sequence $\{ r_{m} \}_{m \in \Z}$ such that $r_{m+1} \geq 4 \beta(r_{m},0) \geq 4 r_{m} > 0$ for each $m \in \mathbb{Z}$, $\lim_{m \to - \infty} {r_{m} = 0}$ and $\lim_{m \to + \infty} {r_{m} = + \infty}$.
By SPR-pre-iISS with respect to $\mathcal{A}$, for each $m \in \mathbb{Z}$, there exists some $\delta_{m} \in (0,1)$ such that each solution pair $(\overline{x},u)$ to $\mathcal{H}_{\delta_{m} \sigma_{1}}$ with $u \in \mathcal{L}_{\gamma_{1},\gamma_{2}}^e (r_{m})$ and $\omega(\overline{\xi}) \leq r_{m}$ satisfies
\begin{align} \label{eq:r8}
\alpha(\omega(\overline{x}(t,j,\overline{\xi},u))) \leq & \beta (\omega(\overline{\xi}),t+j) + \int_0^t \gamma_1 (\abs{u(s,i(s))}) \mathrm{d}s \nonumber \\
& + \!\!\!\!\!\!\!\!\!\!\!\!\!\!\!\!{ \sum_{\scriptsize{\begin{array}{c}
(t^\prime,j^\prime) \in \Gamma(u),\\
(0,0) \preceq (t^{\prime},j^{\prime}) \prec (t,j)\end{array}}} \!\!\!\!\!\!\!\!\!\!\!\!\!\!\!\!\gamma_2 (\abs{u(t^\prime,j^\prime)} ) } + \frac{r_{m-1}}{2}
\end{align}
for all $(t,j) \in \dom \overline{x}$. The following also holds
\begin{align*} 
\alpha(\omega(\overline{x}(t,j,\overline{\xi},u))) \leq r_{m+1} + \int_0^t \gamma_1 (\abs{u(s,i(s))}) \mathrm{d}s + & \!\!\!\!\!\!\!\!\!\!\!\!\!\!\!\!{ \sum_{\scriptsize{\begin{array}{c}
(t^\prime,j^\prime) \in \Gamma(u),\\
(0,0) \preceq (t^{\prime},j^{\prime}) \prec (t,j)\end{array}}} \!\!\!\!\!\!\!\!\!\!\!\!\!\!\!\!\gamma_2 (\abs{u(t^\prime,j^\prime)}) } &
\end{align*}
for all $(t,j) \in \dom \overline{x}$. It follows with the estimate (\ref{eq:r8}) that there exists some $\tau_{m} > 0$ such that each solution pair $(\overline{x},u)$ with $u \in \mathcal{L}_{\gamma_{1},\gamma_{2}}^e (r_{m})$ and $\omega(\overline{\xi}) \leq r_{m}$ satisfies
\begin{align*}
\alpha(\omega(\overline{x}(t,j,\overline{\xi},u))) \leq & r_{m-1} + \int_0^t \gamma_1 (\abs{u(s,i(s))}) \mathrm{d}s + \!\!\!\!\!\!\!\!\!\!\!\!\!\!\!\!{\color{black} \sum_{\scriptsize{\begin{array}{c}
(t^\prime,j^\prime) \in \Gamma(u),\\
(0,0) \preceq (t^{\prime},j^{\prime}) \prec (t,j)\end{array}}} \!\!\!\!\!\!\!\!\!\!\!\!\!\!\!\!\gamma_2 (\abs{u(t^\prime,j^\prime)}) } & 
\end{align*}
for all $(t,j) \in \dom \overline{x}$ with $t+j \geq \tau_m$. Pick any admissible perturbation radius $\sigma_2 \colon \X \to \Rp$ that is positive on $\X \backslash \A$ such that $\sigma_2 (\overline{\xi}) \leq \min \{ \delta_{m-1},\delta_m , \delta_{m+1} \} \sigma_1 (\overline{\xi})$ for all $r_{m-1} \leq \omega(\overline{\xi}) \leq r_m$. Then, for every $m \in \mathbb{Z}$ and for each solution pair $(\overline{x},u)$ to $\H_{\sigma_2}$ with $u \in \mathcal{L}_{\gamma_1,\gamma_2}^e (r_{m})$ and $\omega(\overline{\xi}) \leq r_m$, the following hold
\begin{description}
    \item[(\emph{i})] $\alpha(\omega(\overline{x}(t,j,\overline{\xi},u))) \leq r_{m+1} + \int_0^t \gamma_1 (\abs{u(s,i(s))}) \mathrm{d}s + {\color{black} \sum_{\scriptsize{\begin{array}{c}
(t^\prime,j^\prime) \in \Gamma(u),\\
(0,0) \preceq (t^{\prime},j^{\prime}) \prec (t,j)\end{array}}}\!\!\!\!\!\!\!\gamma_2 (\abs{u(t^\prime,j^\prime)}) }$ for all $(t,j) \in \mathrm{dom}\overline{x}$.
\item[(\emph{ii})] There exists some $\tau_{m} > 0$ such that
\begin{align*}
\alpha(\omega(\overline{x}(t,j,\overline{\xi},u))) \leq & r_{m-1} + \int_0^t \gamma_1 (\abs{u(s,i(s))}) \mathrm{d}s + \!\!\!\!\!\!\!\!\!\!\!\!\!\!\!\!{\color{black} \sum_{\scriptsize{\begin{array}{c}
(t^\prime,j^\prime) \in \Gamma(u),\\
(0,0) \preceq (t^{\prime},j^{\prime}) \prec (t,j)\end{array}}} \!\!\!\!\!\!\!\!\!\!\!\!\!\!\!\!\gamma_2 (\abs{u(t^\prime,j^\prime)}) } \nonumber \\
& \qquad\qquad\qquad\quad \forall (t,j) \in \dom \overline{x} \textrm{ with } t+j \geq \tau_m .
\end{align*}
\end{description}
Let $\tilde{\beta} \colon \Rp \times \Rp \to \Rp$ be
\begin{align*} 
\tilde\beta (r,s) = \sup \Bigg\{ \alpha(\omega(\overline{x}(t,j,\overline{\xi},u))) & - \int_0^t \gamma_1 (\abs{u(s,i(s))}) \mathrm{d}s - \!\!\!\!\!\!\!\!\!\!\!\!\!\!\!\!{\color{black} \sum_{\scriptsize{\begin{array}{c}
(t^\prime,j^\prime) \in \Gamma(u),\\
(0,0) \preceq (t^{\prime},j^{\prime}) \prec (t,j)\end{array}}} \!\!\!\!\!\!\!\!\!\!\!\!\!\!\!\!\gamma_2 (\abs{u(t^\prime,j^\prime)}) } \colon \nonumber \\
& \overline{x} \in \varrho_{\mathcal{H}_{\sigma_{2}}} (\overline{\xi}) , \, u \in \mathcal{L}_{\gamma_1,\gamma_2}^e (r) , \, \omega(\overline{\xi}) \leq r , \, t+j \geq s \Bigg\} .
\end{align*}
By the very definition, $r \to \tilde{\beta}(r,s)$ is nondecreasing for each $s \geq 0$. We also get $\alpha(r) \leq \tilde{\beta} (r,0)$ for all $r \geq 0$. The item (\textit{i}) implies that $\tilde{\beta}(r,0)$ is bounded. By the definition of $\tilde{\beta}$, $s \to \tilde{\beta} (r,s)$ is nonincreasing, and so $\tilde{\beta} (r,s)$ is bounded for all $s \geq 0$. From the item (\textit{ii}), for each $r \geq 0$, we get $\tilde{\beta} (r,s) \to 0$ as $s \to +\infty$. So $\tilde{\beta}$ has all properties required of a $\mathcal{KL}$ function. Consequently, the existence of such a $\tilde\beta$ implies that for all $(t,j) \in \mathrm{dom}\,\overline{x}$, each solution pair $(\overline{x},u)$ to $\H_{\sigma_2}$ with $u \in \mathcal{L}_{\gamma_1,\gamma_2}^e$ and $\overline{\xi} \in \X$ satisfies
\begin{align*}
\alpha( \omega(\overline{x} (t,j,\overline{\xi},u))) & \leq \tilde{\beta} (\omega(\overline{\xi}),t+j) + \int_0^t \gamma_1 (\abs{u(s,i(s))}) \mathrm{d}s + \!\!\!\!\!\!\!\!\!\!\!\!\!\!\!\!{\color{black} \sum_{\scriptsize{\begin{array}{c}
(t^\prime,j^\prime) \in \Gamma(u),\\
(0,0) \preceq (t^{\prime},j^{\prime}) \prec (t,j)\end{array}}} \!\!\!\!\!\!\!\!\!\!\!\!\!\!\!\!\gamma_2 (\abs{u(t^\prime,j^\prime)}) } \nonumber \\
& \leq \max \Bigg\{ 3 \tilde{\beta} (\omega(\overline{\xi}),t+j) , 3 \int_0^t \gamma_1 (\abs{u(s,i(s))}) \mathrm{d}s , 3 \!\!\!\!\!\!\!\!\!\!\!\!\!\!\!\!{\color{black} \sum_{\scriptsize{\begin{array}{c}
(t^\prime,j^\prime) \in \Gamma(u),\\
(0,0) \preceq (t^{\prime},j^{\prime}) \prec (t,j)\end{array}}} \!\!\!\!\!\!\!\!\!\!\!\!\!\!\!\!\gamma_2 (\abs{u(t^\prime,j^\prime)}) } \Bigg\} .
\end{align*}
Setting $\overline{\beta} (\cdot,\cdot) := 3 \tilde{\beta} (\cdot,\cdot), \overline{\gamma}_1 (\cdot) := 3 \gamma_1 (\cdot)$ and $\overline{\gamma}_2 (\cdot) := 3 \gamma_2 (\cdot)$ completes the proof. 
\end{proof}
As seen, the combination of Proposition~\ref{P:01} and Proposition~\ref{T:00} shows robustness of pre-iISS in terms of sufficiently small perturbations. This finishes the proof.
\section{Proof of Lemma~\ref{L:15}} \label{Ap:K}
From the definition of $V_0$, it is easy to see that $V_0 (x) |_{x \in \A} = 0$.
The first inequality of \eqref{eq:e56} comes from considering the special case of \eqref{eq:e53} in which $(t,j) = (0,0)$.
The second one follows from the first inequality of \eqref{eq:e10} with $\overline{x}_\varphi$ and $\overline{\beta}_0$ in place of $x_\varphi$ and $\tilde{\beta}_0$, and the fact that $\abs{d(t,j)} \leq 1$ for all $(t,j) \in \dom \, d$. Let 
\begin{align} \label{eq:e84}
\tilde{d} (t,j) := \left\{ \begin{array} {l}
 \mu \quad \quad \quad \quad \quad \quad \;\,\;\, \mathrm{for} \; (t,j) \preceq (h,m) , \\ 
 d (t-h , j - m) \quad \; \mathrm{for} \; (t,j) \succ (h,m) . \\ 
 \end{array} \right.
\end{align}
Pick a maximal solution $\overline{x}_{\varphi} (h,m,\xi)$ to $\hat{\mathcal{H}}_\sigma$.
Note that if $\overline{x}_{\varphi} (t,j,\overline{x}_{\varphi} (h,m,\xi)) \in \hat{\varrho}_{\sigma} (\overline{x}_{\varphi} (h,m,\xi))$ then there exists $\overline{x}_{\varphi} (t+h,j+m,\xi) \in \hat{\varrho}_{\sigma} (\xi)$ such that $(t,j) \in \mathrm{dom}\,\overline{x}_{\varphi}$ implies $(t+h,j+m) \in \mathrm{dom}\,\overline{x}_{\varphi}$ and $\overline{x}_{\varphi} (t,j,\overline{x}_{\varphi} (h,m,\xi)) = \overline{x}_{\varphi} (t + h ,j + m,\xi)$.
Given $m = 0$, one can see that for each $\xi \in \hat{\mathcal{C}} \backslash \mathcal{A}, \, \abs{\mu} \leq 1 $, $\overline{x}_{\varphi} \in \hat{\varrho}_{\sigma} (\xi)$ and $(h,0),(t,0) \in \mathrm{dom} \, \overline{x}_{\varphi}$ such that $(h,0) \prec (t,0)$, we have
\begin{align*}
V_{0} (\overline{x}_\varphi (h,0,\xi)) = & \sup_{\overline{x}_\varphi \in \hat{\varrho}_\sigma (\overline{x}_\varphi (h,0,\xi)) , \, (t,j) \in \dom \, \overline{x}_\varphi , \, d \in \overline{\mathcal{M}}} \Big\{ \alpha( \omega (\overline{x}_\varphi (t,j,\overline{x}_{\varphi} (h,0,\xi)) ) ) & \nonumber \\
& - \int_0^t \overline{\gamma}_1 ( \abs{d(s,i(s))} \, \varphi(\omega(\overline{x}_\varphi (s,i(s),\overline{x}_\varphi (h,0,\xi)) ) ) ) \mathrm{d} s & \nonumber \\
& - \!\!\!\!\!\!\!\!\!\!\!\!\!\!\!\sum_{\scriptsize{\begin{array}{c}(t^\prime,j^\prime) \in \Gamma(d),\\(0,0) \preceq (t^{\prime},j^{\prime}) \prec (t,j)\end{array}}} \!\!\!\!\!\!\!\!\!\!\!\!\!\!\! \overline\gamma_2 (\abs{d(t^\prime,j^\prime)} \varphi(\omega (\overline{x}_{\varphi}(t^\prime,j^\prime,\overline{x}_{\varphi} (h,0,\xi))) ))\Big\}
\\
= & \sup_{\overline{x}_{\varphi} \in \hat{\varrho}_{\sigma} (\xi) , \, (t+h,j) \in \dom \, \overline{x}_{\varphi} , \, d \in \overline{\mathcal{M}}} \Big\{ \alpha( \omega ( \overline{x}_{\varphi} (t+h,j,\xi) ) ) & \nonumber \\
& - \int_0^t \overline{\gamma}_{1}(\abs{d(s,i(s))} \, \varphi (\omega(\overline{x}_{\varphi} (s+h,i(s),\xi)))) \mathrm{d} s & \nonumber \\
& - \!\!\!\!\!\!\!\!\!\!\!\!\!\!\! \sum_{\scriptsize{\begin{array}{c}(t^\prime,j^\prime) \in \Gamma(d),\\(0,0) \preceq (t^{\prime},j^{\prime}) \prec (t,j)\end{array}}} \!\!\!\!\!\!\!\!\!\!\!\!\!\!\! \overline\gamma_2 (\abs{d(t^\prime,j^\prime)} \varphi(\omega (\overline{x}_{\varphi}(t^\prime + h,j^\prime,\xi)) ))\Big\} \\
= & \sup_{\overline{x}_{\varphi} \in \hat{\varrho}_{\sigma} (\xi) , \, (\tau,j) \in \mathrm{dom} \, \overline{x}_{\varphi} , \tau \geq h , d \in \overline{\mathcal{M}}} \Big\{ \alpha( \omega ( \overline{x}_{\varphi} (\tau,j,\xi) ) ) & \nonumber \\
& - \int_{h}^{\tau_{1}} \overline{\gamma}_{1}(\abs{d(s_{1} - h,0)} \, \varphi ( \omega ( \overline{x}_{\varphi} (s_{1},0,\xi) ) ) ) \mathrm{d} s_{1} & \nonumber \\
& - \int_{\tau_1}^\tau \overline{\gamma}_{1} (\abs{d(s_{1} - h,i(s_1))} \, \varphi ( \omega ( \overline{x}_{\varphi} (s_{1},i(s_1),\xi) ) ) ) \mathrm{d} s_1 & \nonumber \\
& - \!\!\!\!\!\!\!\!\!\!\!\!\!\!\! \sum_{\scriptsize{\begin{array}{c}(\tau^\prime-h,j^\prime) \in \Gamma(d),\\(0,0) \preceq (\tau^{\prime}-h,j^{\prime}) \prec (\tau-h,j)\end{array}}} \!\!\!\!\!\!\!\!\!\!\!\!\!\!\! \overline\gamma_2 (\abs{d(\tau^\prime-h,j^\prime)} \varphi(\omega (\overline{x}_{\varphi}(\tau^\prime,j^\prime,\xi)) ))\Big\} \\
\leq & \sup_{\overline{x}_\varphi \in \hat{\varrho} (\xi) , \, (\tau,j) \in \dom \, \overline{x}_\varphi , \tau \geq 0 , \tilde{d} \in \overline{\mathcal{M}}} \Big\{ \alpha( \omega ( \overline{x}_\varphi (\tau,j,\xi))) & \nonumber \\
& - \int_0^\tau \overline{\gamma}_1 (|\tilde{d} (s_1,i(s_1))| \, \varphi ( \omega ( \overline{x}_\varphi (s_1,i(s_1),\xi) ) ) ) \mathrm{d} s_1 & \nonumber \\
& - \!\!\!\!\!\!\!\!\!\!\!\!\!\!\! \sum_{\scriptsize{\begin{array}{c}(\tau^\prime,j^\prime) \in \Gamma(\tilde d),\\(0,0) \preceq (\tau^{\prime},j^{\prime}) \prec (\tau,j)\end{array}}} \!\!\!\!\!\!\!\!\!\!\!\!\!\!\! \overline\gamma_2 (|\tilde{d}(\tau^\prime,j^\prime)| \varphi(\omega (\overline{x}_{\varphi}(\tau^\prime,j^\prime,\xi)) ))\Big\} \\
& + \int_{0}^{h} \overline{\gamma}_{1}(\abs{\mu} \, \varphi ( \omega ( \overline{x}_\varphi (s_1,0,\xi) ) ) ) \mathrm{d} s_1 & \nonumber\\
= & V_{0} (\xi) + \int_{0}^{h} \overline{\gamma}_{1} (\abs{\mu} \, \varphi ( \omega ( \overline{x}_{\varphi} (s_{1},0,\xi) ) ) ) \mathrm{d} s_1 . & 
\end{align*}
For each $\xi \in \hat{\mathcal{D}}$ and $g \in \hat{G}(\xi)$, there exists $\overline{x}_{\varphi} \in \hat{\varrho}_{\sigma} (\xi)$ such that $(0,1) \in \mathrm{dom} \, \overline{x}_{\varphi}$, that is, $t_{0} = t_{1}$. So let $(h,m) = (0,1)$. We have for any any $\xi \in \hat{\mathcal{D}}$ and $g \in \hat{G}(\xi)$, and $\abs{\mu} \leq 1$
\begin{align*}
V_{0} (\overline{x}_{\varphi} (0,1,\xi)) = & \sup_{\overline{x}_{\varphi} \in \hat{\varrho}_{\sigma} (\xi) , \, (t,j) \in \mathrm{dom} \, \overline{x}_{\varphi} , \, d \in \overline{\mathcal{M}}} \Big\{ \alpha( \omega (\overline{x}_{\varphi} (t,j,\overline{x}_{\varphi} (0,1,\xi)))) & \nonumber \\
& - \int_0^t \overline{\gamma}_{1}( \abs{d(s,i(s))} \, \varphi(\omega(\overline{x}_\varphi (s,i(s),\overline{x}_{\varphi} (0,1,\xi)) ) ) ) \mathrm{d} s & \nonumber \\
& - \!\!\!\!\!\!\!\!\!\!\!\!\!\!\!\sum_{\scriptsize{\begin{array}{c}(t^\prime,j^\prime) \in \Gamma(d),\\(0,0) \preceq (t^{\prime},j^{\prime}) \prec (t,j)\end{array}}} \!\!\!\!\!\!\!\!\!\!\!\!\!\!\! \overline\gamma_2 (\abs{d(t^\prime,j^\prime)} \varphi(\omega (\overline{x}_{\varphi}(t^\prime,j^\prime,\overline{x}_{\varphi} (0,1,\xi))) ))\Big\}\\
= & \sup_{\overline{x}_{\varphi} \in \hat{\varrho}_{\sigma} (\xi) , \, (t,j+1) \in \mathrm{dom} \, \overline{x}_{\varphi} , \, d \in \overline{\mathcal{M}}} \Big\{ \alpha( \omega ( \overline{x}_{\varphi} (t,j+1,\xi) ) ) & \nonumber \\
& - \int_0^t \overline{\gamma}_1 (\abs{d(s,i(s))} \, \varphi (\omega(\overline{x}_\varphi (s,i(s)+1,\xi)))) \mathrm{d} s & \nonumber \\
& - \!\!\!\!\!\!\!\!\!\!\!\!\!\!\!\sum_{\scriptsize{\begin{array}{c}(t^\prime,j^\prime) \in \Gamma(d),\\(0,0) \preceq (t^{\prime},j^{\prime}) \prec (t,j)\end{array}}} \!\!\!\!\!\!\!\!\!\!\!\!\!\!\! \overline\gamma_2 (\abs{d(t^\prime,j^\prime)} \varphi(\omega (\overline{x}_{\varphi}(t^\prime,j^\prime+1,\xi)) ))\Big\}\\
= & \sup_{\overline{x}_{\varphi} \in \hat{\varrho}_{\sigma} (\xi) , \, (t,\ell) \in \mathrm{dom} \, \overline{x}_\varphi , \ell \geq 1 , d \in \overline{\mathcal{M}}} \Big\{ \alpha(\omega(\overline{x}_\varphi (t,\ell,\xi))) & \nonumber \\
& - \int_{t_1}^t \overline{\gamma}_1 (\abs{d(s,i(s)-1)} \, \varphi (\omega(\overline{x}_\varphi (s,i(s),\xi)))) \mathrm{d} s & \nonumber \\
& - \!\!\!\!\!\!\!\!\!\!\!\!\!\!\!\!\!\!\!\sum_{\scriptsize{\begin{array}{c}(t^\prime,\ell^\prime-1) \in \Gamma(d),\\(0,0) \preceq (t^{\prime},\ell^{\prime}-1) \prec (t,\ell-1)\end{array}}} \!\!\!\!\!\!\!\!\!\!\!\!\!\!\!\!\!\!\!\!\! \overline\gamma_2 (\abs{d(t^\prime,\ell^\prime-1)} \varphi(\omega (\overline{x}_{\varphi}(t^\prime,\ell^\prime,\xi)) ))\Big\}\\
\leq & \sup_{\overline{x}_{\varphi} \in \hat{\varrho}_\sigma (\xi) , \, (t,\ell) \in \mathrm{dom} \overline{x}_{\varphi} , \ell \geq 0, \tilde{d} \in \overline{\mathcal{M}}} \Big\{ \alpha( \omega ( \overline{x}_{\varphi} (t,\ell,\xi) ) ) & \nonumber \\
& - \int_0^t \overline{\gamma}_1 (|\tilde{d} (s,i(s))| \, \varphi (\omega(\overline{x}_\varphi (s,i(s),\xi)))) \mathrm{d} s & \nonumber \\
& - \!\!\!\!\!\!\!\!\!\!\!\!\!\!\sum_{\scriptsize{\begin{array}{c}(t^\prime,\ell^\prime) \in \Gamma(\tilde{d}),\\(0,0) \preceq (t^{\prime},\ell^{\prime}) \prec (t,\ell)\end{array}}} \!\!\!\!\!\!\!\!\!\!\!\!\!\!\!\! \overline\gamma_2 (|\tilde{d}(t^\prime,\ell^\prime)| \varphi(\omega (\overline{x}_{\varphi}(t^\prime,\ell^\prime,\xi)) ))\Big\}\\
& + \overline{\gamma}_{2}(\abs{\mu} \, \varphi ( \omega ( \xi ) ) )  & \nonumber\\
= & V_{0} (\xi) + \overline{\gamma}_{2}(\abs{\mu} \, \varphi ( \omega  (\xi) ) ) & \nonumber
\end{align*}
Therefore, for each $\xi \in \hat{\D}$ and $g \in \hat{G}(\xi)$, and $\abs{\mu} \leq 1$, we get
\begin{align*}
V_0 (g) \leq V_0 (\xi) + \overline{\gamma}_2 (\abs{\mu} \, \varphi ( \omega ( \xi ) ) ) . &
\end{align*}
This completes the proof.
\section{Proof of Lemma~\ref{L:5}} \label{Ap:G}
Sufficiency immediately follows from (\ref{eq:e20}) and (\ref{eq:e33}) with $\nu = 0$.
To establish necessity, by the Converse Lyapunov Theorem \cite[Theorem 3.13]{Cai.2008}, there exist a smooth Lyapunov function $V$ and $\alpha_1,\alpha_2, \alpha_3 \in \Kinf$ such that 
\begin{equation*}
\begin{array}{cccl}
 \alpha_1 (\omega (\xi)) \leq V(\xi) &\leq& \alpha _2 (\omega(\xi)) &\qquad \forall \xi \in \X , \\
 \left\langle \nabla V(\xi),f(\xi,0) \right\rangle &\leq& - \alpha_3 \left( \omega (\xi) \right) & \qquad\forall \xi \in \C , \\
V(g(\xi,0)) - V(\xi) &\leq& - \alpha_3 (\omega(\xi)) & \qquad\forall \xi \in \D .  
\end{array}
\end{equation*}
Define the following continuous function $\delta \colon \Rp \times \Rp \to \R$ by
\begin{align*}
\delta(s,r) := \max \Big\{ & \max \big\{ \left\langle \nabla V(\xi),f(\xi,\mu) \right\rangle + \alpha_{3} (\abs{\xi}) \colon \omega(\xi) = s , \abs{\mu} = r \big\} , & \nonumber \\
& \max \big\{ V(g(\xi,\mu)) - V(\xi) + \alpha_{3} (\abs{\xi}) \colon \omega(\xi) = s , \abs{\mu} = r \big\} \Big\} . &
\end{align*}
It should be pointed out that $\delta(s,0) < 0$ for all $s>0$ as $\mathcal{H}$ is 0-input pre-AS. Applying Lemma 3.1 in \cite{Sontag.1990} to $\delta (\cdot,\cdot)$ gives that there exist some $\chi \in \Kinf$ and a smooth function $q \colon \Rp \to \Rsp$ such that
\begin{itemize}
    \item[($\textit{a}$)] $q(s) \neq 0$ for all $s \geq 0$ and $q(s) \equiv 1$ for all $s \in [0,1]$;
\item[($\textit{b}$)] $\delta (s,p) < 0$ for each pair $(s,r) \in \mathbb{R}_{\geq 0} \times \Rp$ for which $\chi(r) < s$, and each $p \leq q(s) r$.
\end{itemize}
We use these properties to establish that (\ref{eq:e20}) and (\ref{eq:e33}) hold. Let $I$ be the $m \times m$ identity matrix.
Assume that $\omega(\xi) > \chi (\abs{\nu})$ and let $s := \omega(\xi)$ and $r := \abs{\nu}$. By the very definition of $\delta$,
\begin{align*}
\max \{ \left\langle \nabla V(\xi),f(\xi,q(\omega(\xi)) I \nu) \right\rangle + \alpha_3 (\abs{\xi}) , V(g(\xi,q(\omega(\xi)) I \nu)) - V(\xi) + \alpha_{3} (\abs{\xi}) \} \leq \delta (s,p)
\end{align*}
where
\begin{align*}
p = \abs{q(\omega(\xi)) I \nu} \leq q(s) r .
\end{align*}
It follows from the fact that $s > \chi(r)$ and using the item ($\textit{b}$) that $\delta$ is negative everywhere. This completes the proof.


\end{document}